\newcommand{\set}[2]{\left\{#1:~#2\right\}}
\newcommand\numberthis{\addtocounter{equation}{1}\tag{\theequation}}
    \def\step{
    \@ifnextchar[ \@step{\@noitemargtrue\@step[\@itemlabel]}}
    \def\@step[#1]{
        \item[#1]\textit{}\hspace*{\dimexpr-\labelwidth-\labelsep}
        }
\begin{document}

\onehalfspacing

\title[Instability of the solitary waves of gDNLS in the degenerate case]
{Instability of the solitary waves for the generalized derivative nonlinear Schr\"{o}dinger
equation in the degenerate case}

\author[]{Changxing Miao}
\address{\hskip-1.15em Changxing Miao:
\hfill\newline Institute of Applied Physics and Computational
Mathematics, \hfill\newline P. O. Box 8009,\ Beijing,\ China,\
100088.}  \email{miao\_changxing@iapcm.ac.cn}

\author[]{Xingdong Tang}
\address{\hskip-1.15em Xingdong Tang \hfill\newline Beijing Computational Science Research Center, \hfill\newline No. 10 West Dongbeiwang Road, Haidian
District, Beijing, China, 100193,}
\email{xdtang202@163.com}

\author[]{Guixiang Xu}
\address{\hskip-1.15em Guixiang Xu \hfill\newline Institute of
Applied Physics and Computational Mathematics, \hfill\newline P. O.
Box 8009,\ Beijing,\ China,\ 100088.}
\email{xu\_guixiang@iapcm.ac.cn}

\subjclass[2000]{Primary: 35L70, Secondary: 35Q55}

\keywords{Generalized Derivative Nonlinear Schr\"{o}dinger Equation; Orbital Instability; Modulation Analysis; Solitary Waves; Virial Identity.}

\begin{abstract}
In this paper, we develop the modulation analysis, the perturbation argument and
the Virial identity similar as those in \cite{MartelM:Instab:gKdV}
to  show the orbital instability of the solitary waves $\Q\sts{x-ct}\e^{\i\omega t}$
of the generalized derivative nonlinear Schr\"{o}dinger equation (gDNLS)
in the degenerate case $c=2z_0\sqrt{\omega}$, where $z_0=z_0\sts{\sigma} $
is the unique zero point of $F\sts{z;~\sigma}$ in $\sts{-1, ~ 1}$.
The new ingredients in the proof are the refined modulation decomposition of the solution
near $\Q$ according to the spectrum property of the linearized operator
$\Scal_{\omega, c}''\sts{\Q}$ and the refined construction of
the Virial identity in the degenerate case. Our argument is qualitative,
and we improve the result in \cite{Fukaya2017}.
\end{abstract}

\maketitle


\section{Introduction}
In this paper, we consider the generalized
derivative nonlinear Schr{\"o}dinger (gDNLS) equation
\begin{equation}
\label{gdnls}
\left\lbrace
\aligned
 &\i {u}_{t} +{u}_{xx}+\i \abs{u}^{2\sigma}{u}_{x}= \; 0,
 \quad\sts{t,x}\in\R \times\R,
 \\
 &u\sts{0,x}= u_0\sts{x} \in  H^1(\R),
\endaligned
\right.
\end{equation}
where $1<\sigma<2$. The equation \eqref{gdnls} is $\dot H^{\frac{\sigma-1}{2\sigma}}$-critical,  and therefore $L^2$-supercritical for $\sigma\in \sts{1,~2}$, since the scaling
transformation
\begin{align*}
  u(t,x)\mapsto
  u_{\lambda}(t,x)
  \triangleq
  \lambda^{\frac{1}{2\sigma}}u(\lambda^2t, \lambda x)
\end{align*}
leaves both \eqref{gdnls} and $\dot H^{\frac{\sigma-1}{2\sigma}}$-norm invariant. \eqref{gdnls} appears in plasma physics (see \cite{MOMT-PHY, M-PHY,
SuSu-book}) and as a model for ultrashort optical pulses for $\sigma=1$ (see \cite{MMW-PHY}).

By the Picard iteration argument with the Strichartz estimates in \cite{Caz:NLS:book}, local well-posedness for \eqref{gdnls} with $\sigma \in \sts{1, 2}$ in the energy space $H^1\sts{\R}$ is
now well understood by Hayashi and Ozawa in \cite{HaOz2016}. More precisely, for any $u_0\in H^1\sts{\R}$, there exists $0<T_{\max}\leq+\infty$ and a unique solution $u\in \Ccal\sts{[0,T_{\max}),~H^1\sts{\R}}$ of \eqref{gdnls}. Moreover, the mass, the momentum and the energy are conserved under the flow \eqref{gdnls}, i.e. for any $t\in [0,T_{\max})$,
\begin{equation}\label{M}	\Mcal\sts{u\sts{t}}
=
\frac{1}{2}\int_{\R}\abs{u\sts{t,x}}^2\d x
=
\Mcal\sts{u_0},
\end{equation}
\begin{equation}\label{P}
\Pcal\sts{u\sts{t}}
=
\frac{1}{2}\Re\int_{\R} \i\sts{\bar{u}{u}_{x}}\sts{t,x}\d x
=
\Pcal\sts{u_0},
\end{equation}
\begin{equation}\label{E}
\Ecal\sts{u\sts{t}}
=
\frac{1}{2}\int_{\R}\abs{{u}_{x}\sts{t,x}}^2\d x
-
\frac{1}{2\sigma + 2}
\Re\int_{\R}\i\sts{\abs{u}^{2\sigma}\bar{u}{u}_{x}}\sts{t,x}\d x
=
\Ecal\sts{u_0}.
\end{equation}

In addition, it was observed in \cite{KaupN:DNLS:soliton, LSS2013} that
the equation \eqref{gdnls} has
a two-parameter family of solitary wave solutions
with the following form
\begin{equation}
\label{uQ}
u\sts{t,x}=\Q\sts{x-ct}\e^{\i\omega t},
\end{equation}
where $4\omega > c^2$ and
\begin{align}
\label{Q}
  Q_{\omega,c}\sts{x}=\Psi_{\omega,c}(x)\exp \left\{
    \i \frac{c}{2}x-\frac{\i}{2\sigma +
      2}\int^{x}_{-\infty}\Psi_{\omega,c}^{2\sigma}(y)\d y\right\}
\end{align}
with
\begin{equation}
  \label{Phi}
  \Psi_{\omega,c}(x)
  =
  \sts{
    \frac{(\sigma+1)(4\omega-c^2)}{
        2\sqrt{\omega}(\cosh(\sigma\sqrt{4\omega-c^2}x)-\frac{c}{2\sqrt{\omega}})
        }
    }^{\frac{1}{2\sigma}}.
\end{equation}
In fact, for the case $\sigma\in \sts{1,~2}$ and $4\omega> c^2$, $ \Psi_{\omega,c}(x)$ is the unique positive solution of
\begin{equation*}
  - \partial_{x}^2\Psi_{\omega,c}
  +
  (\omega -
  \frac{c^2}{4})
  \Psi_{\omega,c} +
  \frac{c}{2}|\Psi_{\omega,c}|^{2\sigma}\Psi_{\omega,c}
  - \frac{2\sigma + 1}{(2\sigma +
    2)^2}
    |\Psi_{\omega,c}|^{4\sigma}\Psi_{\omega,c}  = 0,
\end{equation*}
up to phase rotation and spatial translation symmetries. Moreover, by the expression of $\Q$, we have
\begin{equation}
\label{QDQ}
  c_0
  \leq
  \left|\frac{\abs{ \partial_{x}{\Q}\sts{x}} }{\Q\sts{x}}\right|
  \leq
  \frac{1}{c_0},
\end{equation}
where $0<c_0<1$ is a constant independent of $x$, and by inserting \eqref{uQ} into \eqref{gdnls}, we have
\begin{equation}
\label{Qeq}
-\partial_{x}^{2}{\Q}+\omega~\Q+c\i\partial_{x}{\Q}-\i\abs{\Q}^{2\sigma}\partial_{x}{\Q}=0.
\end{equation}


For the case $\sigma\in(1,2)$ and $4\omega > c^2$,
Fukaya, Hayashi and Inui recently made use of the structure analysis to show the variational characterization of $\Q$ in \cite{FukHI:gDNLS}, i.e.
$Q_{\omega,c}$ is a minimizer of the following variational problem:
\begin{align}
\label{d}
    d\sts{\omega,c}
    =
    \inf\set{\Scal_{\omega,c}\sts{\varphi} }{ \varphi\in H^1\setminus\ltl{0},
    ~~ \mathcal{K}_{\omega,c}\sts{\varphi}=0  }
\end{align}
where the action functional $\Scal_{\omega,c}\sts{\varphi}$ is defined by
\begin{equation}
  \label{S}
  \Scal_{\omega,c}(\varphi)=\Ecal(\varphi)+ \omega ~\Mcal(\varphi) + c ~\Pcal(\varphi),
\end{equation}
and the scaling derivative functional $  \mathcal{K}_{\omega,c}(\varphi)$ is
defined by
\begin{equation}
  \label{K}
  \mathcal{K}_{\omega,c}(\varphi)
  =
  \left.\frac{\d}{\d \lambda}\Scal_{\omega,c}(\lambda\varphi)\right|_{\lambda=1}.
\end{equation}
In addition, a sufficient condition of the global wellposedness for \eqref{gdnls}
in the energy space was also induced by the variational characterization of
$\Q$ in \cite{FukHI:gDNLS}.
It is worthy that the authors firstly made use of the structure analysis
to show the variational characterization of $\Q$ with $\sigma=1$ and $4\omega > c^2$
in \cite{MiaoTX:Exist}. We can also refer to
\cite{AmbMal:book, IbMasN:NLKG:scat, Willem:book}
and references therein for the variational characterization of the solitary waves.

We now recall the definition of the orbital stability
in order to show the orbital instability analysis of the solitary waves.

\begin{defi}
Let $U(t,x)$ be a solitary wave solution of \eqref{gdnls}.
We say that $U(t,\cdot)$ is orbitally stable (up to phase rotation and spatial translation symmetries)
if for any $\epsilon>0$, there exists $\delta>0$
such that if $u_0\in H^1(\R)$ with $\big\|u_0\sts{\cdot}-U(0,\cdot)\big\|_{H^1}<\delta$,
then the solution $u(t)$ of \eqref{gdnls} with
initial data $u_0$ exist globally in time and satisfies
\begin{equation*}
\sup_{t\geq 0} \inf_{(y, \gamma)\in \R^2}
\big\|u(t,\cdot)-U(t, \cdot-y)
e^{i\gamma}
\big\|_{H^1(\R)}<\epsilon.
\end{equation*}
Otherwise, $U(t,\cdot)$ is said to be orbitally unstable. 
\end{defi}
By means of the classical stability theory of the solitary waves of the Hamiltonian PDEs in \cite{GSS1987, GSS1990, MartelMT:Stab:gKdV, MartelMT:Stab:NLS, Pave:book} and references therein,  the crucial idea in the proof of the orbital stability of the solitary waves $\Q\sts{x-ct}\e^{\i\omega t}$ of \eqref{gdnls} with $\sigma=1,~4\omega > c^2$ in \cite{MiaoTX:Stab} is essentially to show that $\Q$ is a local minimizer of the action functional $\Scal_{\omega, c}\sts{u}$ (or the energy functional $\Ecal\sts{u}$)
over the set of all admissible functions $u$ satisfying
\begin{align*}
\Mcal\sts{u} = \Mcal\sts{\Q}, \;\; \Pcal\sts{u} = \Pcal\sts{\Q}
\end{align*}
by the non-degenerate condition of the Hessian matrix
\begin{equation}
\label{ddp}
d''\sts{\omega,c}
 \triangleq
\begin{bmatrix}
	\partial_{\omega}^{2}{d\sts{\omega,c}} & \partial_{\omega,c}{d\sts{\omega,c}}
	\\[6pt]
	\partial_{c,\omega}{d\sts{\omega,c}} & \partial_{c}^{2}{d\sts{\omega,c}}
\end{bmatrix}
\end{equation}
of the function
$d\sts{\omega, c} =   S_{\omega,c}\sts{\Q}$
for the case $\sigma=1,~4\omega > c^2$.
In fact, there is another nonlinear argument based on
the concentration compactness principle to show the orbital stability
of the solitary waves of Hamiltonian system, we can refer to
\cite{CaL:NLS:stable, ColinOhta-DNLS} and references therein.

While for the equation \eqref{gdnls} when $\sigma \in \sts{1,~2},~4\omega > c^2$, it becomes complicate because the degeneracy of the Hessian matrix $d''\sts{\omega,c}$ may occur as shown in \cite{LSS2013}. More precisely, the degeneracy of the Hessian matrix $d''\sts{\omega,~c}$ occurs for the case $\sigma \in \sts{1,~2},~4\omega > c^2$ if and only if $c=2z_0 \sqrt{\omega}$, where $z_0=z_0(\sigma)$ is the unique zero point in \(\sts{-1,1}\) of the following function
\begin{multline}\tiny
         F(z; \sigma) \\
        = (\sigma-1)^2 \ltl{\int_0^\infty (\cosh y -z)^{-\frac{1}{\sigma}} \d y}^2
        - \ltl{\int_0^\infty (\cosh y -z)^{-\frac{1}{\sigma}-1}(z
          \cosh y -1)\d y }^2.\label{eq:sig-z}
\end{multline}
For the case $\sigma \in \sts{1,~2}$, please refer to Figure \ref{fig:Fcurves} for the distribution of the zero point $z_0\sts{\sigma}$ of $F(z; \sigma)$ in $\sts{-1,~1}$, this figure comes from \cite{LSS2013}.
%
%
\begin{figure}[ht]
\includegraphics[width=8cm]{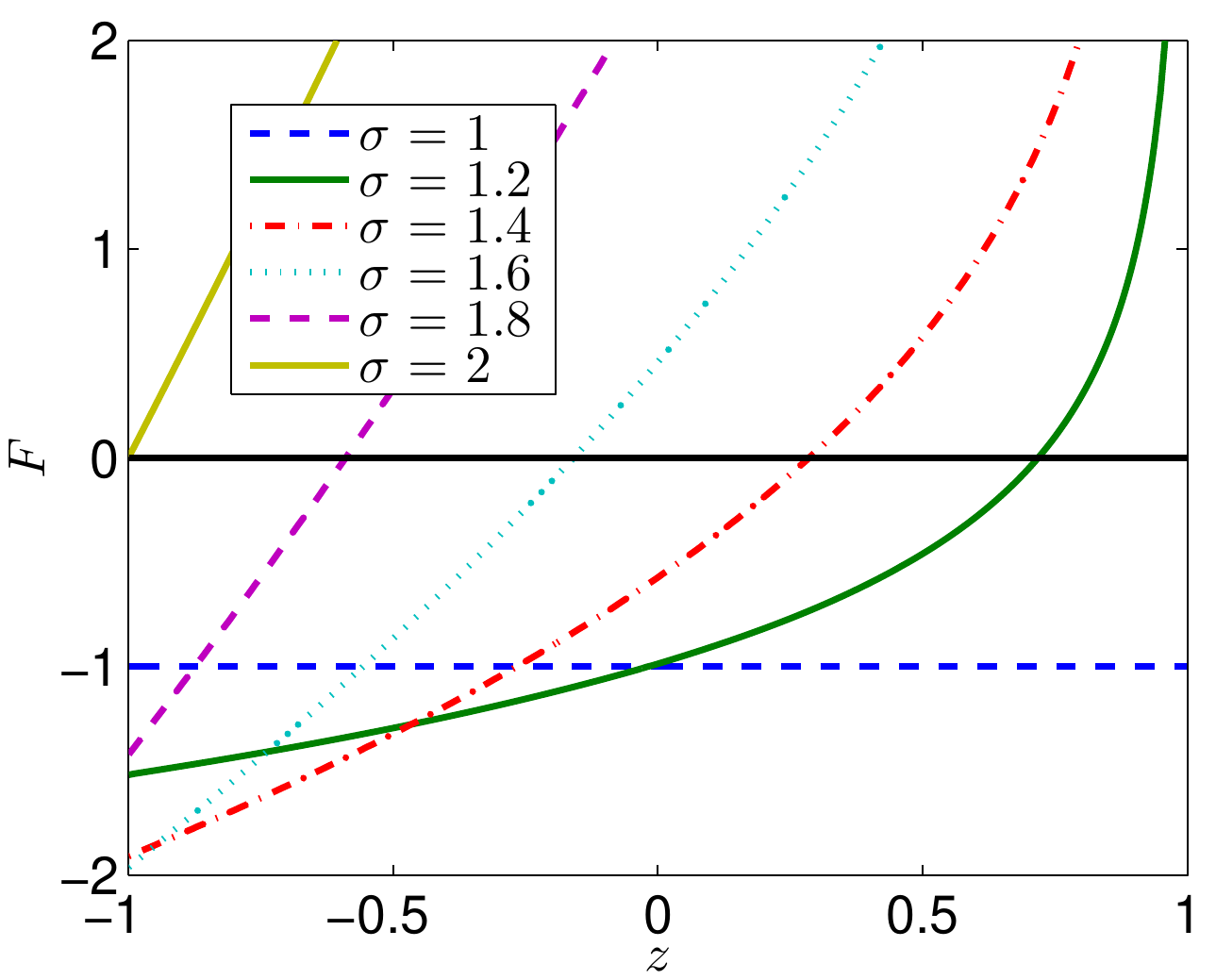}
\caption{The zero point $z_0\sts{\sigma}$ of $F(z; \sigma)$ in $\sts{-1,~1}$ for $\sigma \in \sts{1,~2}$, and $z_0\sts{\sigma}$ is a decreasing function as $\sigma $ increases in $(1, 2)$. }\label{fig:Fcurves}
\end{figure}

At the same time, by means of the stability criteria in \cite{GSS1990}, Liu, Simpson and Sulem numerically showed that the solitary waves $\Q\sts{x-ct}\e^{\i\omega t}$  of \eqref{gdnls} with $\sigma\in \sts{1,~2}$ are orbitally stable for the case $c\in (-2\sqrt{\omega}, 2z_0\sqrt{\omega})$  and orbitally unstable for the case $c\in (2z_0\sqrt{\omega}, 2\sqrt{\omega})$ in \cite{LSS2013}.  Recently, in \cite{TX:Stab}, the last two authors  made use of the modulation analysis, the perturbation argument and the energy argument as those in \cite{MartelMT:Stab:gKdV, MartelMT:Stab:NLS} (also see \cite{LeWu:DNLS, MiaoTX:Stab}), to show the orbital stability of the sum of the two-soliton waves of \eqref{gdnls} with $c_k\in (-2\sqrt{\omega_k}, ~2z_0\sqrt{\omega_k}),~k=1,~2$ under some technical conditions.

As for the degenerate case $\sigma\in (1, 2)$ and $c=2z_0 \sqrt{\omega}$,
both stability analysis and instability analysis in \cite{GSS1987, GSS1990} fail because of the degeneracy of the Hessian matrix $d''\sts{\omega, ~c}$. In fact, it was shown that the degeneracy is of finite order in \cite{LSS2013}, and there exists a vector ${\bm\xi}=\sts{\xi_1, \xi_2}^{\text{T}}\in \R^2\backslash\{\bm 0\}$ such that
    \begin{align*}
    d''\sts{\omega,c} \cdot {\bm\xi} = {\bm 0}, \;\;\text{and}\; \; \mathbf{d}_{\bm\xi}^{\tprime}
=
  \left.
  \frac{\d^3}{\d \lambda^3}
  d\sts{\omega+\lambda\xi_1,c+\lambda\xi_2}
  \right|_{\lambda=0}
  \neq
  0.
    \end{align*}
Now let $\widetilde{\varphi}_{\omega, c}$ be defined by
$
\widetilde{\varphi}_{\omega, c}\sts{x}= \xi_1 ~\partial_{\omega}\Q\sts{x} + \xi_2~ \partial_{c}\Q\sts{x},
$
and
$\varphi_{\omega, c}\sts{x}$ be defined by
\begin{align*}
  \varphi_{\omega, c}\sts{x}
=
  {\tphi}_{\omega, c}
  -
  a~
  {\partial_x \Q}\sts{x}
  -
  b~
  {\i{\Q}}\sts{x}
\end{align*}
where $a$ and $b$ are
\begin{equation}\label{quan:ab}
a
  =
  \frac{
    \det\begin{bmatrix}
           \action{\i {\Q}}{\tphi_{\omega, c}} & \action{\i{\Q}}{\i{\Q}} \\
            \action{\partial_x{\Q}}{\tphi_{\omega,c}} & \action{\partial_x{\Q}}{\i{\ Q}}\\
         \end{bmatrix}
  }{
    \det\begin{bmatrix}
           \action{\i {\Q}}  {\partial_x \Q}  & \action{\i{\Q}}{\i{\Q}} \\
          \action{\partial_x{\Q}}  {\partial_x \Q}  & \action{\partial_x{\Q}}{\i{\Q}}\\
         \end{bmatrix}
  },
\quad
b
  =
  \frac{
    \det\begin{bmatrix}
         \action{\i {\Q}}  {\partial_x \Q}  & \action{\i {\Q}}{\tphi_{\omega, c}} \\
       \action{\partial_x{\Q} }  {\partial_x \Q} & \action{\partial_x{\Q} }{\tphi_{\omega, c}}\\
         \end{bmatrix}
  }{
    \det\begin{bmatrix}
           \action{\i {\Q}}  {\partial_x \Q}  & \action{\i{\Q}}{\i{\Q}} \\
          \action{{\Q}_{x}}  {\partial_x \Q}  & \action{\partial_x{\Q} }{\i{\Q}}\\
         \end{bmatrix}
  }.
\end{equation}
For $u\in H^1$, let $\Jcal\sts{u}$ be defined by $
\Jcal\sts{u}= \omega~\Mcal\sts{u} + c~ \Pcal\sts{u},
$ and $\Bcal\Q\sts{x}$ be defined by
\begin{align*}\Bcal\Q\sts{x}=\xi_1~\Q\sts{x} + \xi_2~\i\partial_x\Q\sts{x}.
\end{align*}
Based on the above preparations,
we can state the following orbital instability result
of the solitary waves of \eqref{gdnls} in the degenerate case.

\begin{theo}\label{thm}
  Let $\sigma\in \sts{1, 2}$, and $c=2 z_0 \sqrt{\omega}$
  where $z_0=z_0(\sigma)$ be the unique zero point in \(\sts{-1,1}\) of $F(z; \sigma)$
  in \eqref{eq:sig-z}, then the solitary wave
$\Q\sts{x-ct}\e^{\i\omega t}$ of \eqref{gdnls} is orbitally unstable.
More precisely, there exist $\alpha_0>0$ and $\lambda_0>0$ such that if
 $$u_{0}\sts{x}= \Q\sts{x}+ \lambda~\phi_{\omega, c} \sts{x} +\widetilde{\rho}\sts{\lambda}~\B{\Q}\sts{x}$$
where $0<\lambda<\lambda_0$ and $\widetilde{\rho}\sts{\lambda}$ is chosen such that
$\Jcal\sts{u_{0}}=\Jcal\sts{\Q}$,
then there exists $t_0=t_0(u_{0})$ such that the solution
$u\sts{t}$ of \eqref{gdnls} with initial data $u_{0}$ satisfies
\begin{equation*}
 \inf_{(y, \gamma)\in \R^2}\big\|u(t_0,\cdot)
 -
 \Q\sts{\cdot-ct_0-y}\e^{\i\omega t_0+\i\gamma}\big\|_{H^1(\R)} \geq \alpha_0.
\end{equation*}
\end{theo}

\begin{rema} Here we give some remarks related to the above result.
\begin{enumerate}
\item There is  nonlinear restriction on $u_0$ (or $\widetilde{\rho}(\lambda)$)
through the functional $\Jcal\sts{u}$ in the assumption,
it is reasonable by the Implicit Function Theorem (see  Lemma \ref{lem:d1}). In fact, $ \widetilde{\rho}$ is a $\mathcal{C}^1$ function and can be taken by
\begin{equation*}
  \widetilde{\rho}\sts{\lambda} = -\frac{\action{\B{\phi_{\omega, c}}}{\phi_{\omega, c}} }{2\action{\B{\Q}}{\B{\Q}}}\lambda^2 +\so{\lambda^2}.
  \end{equation*}

\item For the mass critical gKdV equation. In \cite{MartelM:Instab:gKdV},
Martel and Merle combined the modulation analysis, the perturbation argument
and the Kato-Virial identity with the pointwise decay estimate of
the linear KdV flow to show the orbital instability of the traveling waves.
We give here the refined decomposition for the function near
$\Q$ (see Lemma \ref{lem:d2}), which helps us to understand
the refined landscape of the action functional $\Scal_{\omega,c}$ around $\Q$ .
It turns out that even though up to the phase rotation and spatial translation
symmetries, $\Q$ is not the local minimizer of the action functional
$\Scal_{\omega, c}$ any more, and $\Scal_{\omega, c}$ is a locally monotone functional
along the direction $\varphi_{\omega,c}$ at $\Q$
(see Lemma \ref{lem:tyl1} and Lemma \ref{lem:tyl2}).

\item Comech and Pelinovsky proved nonlinear instability of the standing waves
with minimal energy of Hamiltonian system with $U(1)$ symmetry in \cite{CoPe2003},
which was caused by higher order algebraic degeneracy of
the zero eigenvalue in the spectrum of the linearized system.
Later, Ohta \cite{Ohta:Ins:JFA} and Maeda  \cite{Maeda2012}
shown the criterion of the orbital instability and
stability of bound states in the finite degenerate case under the framework
of Grillakis, Shatah and Strauss's argument in \cite{GSS1987, GSS1990} successively.
Compared with these arguments, our modulation decomposition is related to
the Hamitonian struture and the monotonicity formula comes from the
dynamical behavior of the radiation term $\eps\sts{t}$
(i.e., the Virial identity, see \eqref{I} and \eqref{eIt}) in this paper.

\item For the case $\sigma\in [3/2,~2)$ and $c=2z_0 \sqrt{\omega}$, Fukaya made use of the argument in \cite{Ohta:Ins:JFA} with the fact that $E\in \Ccal^3 \sts{H^1(\R),~\R}$ to show that the solitary wave $\Q\sts{x-ct}\e^{\i\omega t}$ of \eqref{gdnls} is orbitally unstable in \cite{Fukaya2017}. It turns out that the condition $E\in \Ccal^3 \sts{H^1(\R),~\R}$ is not a necessary condition, the local condition that $E\in \Ccal^3 $ at $\Q$, which can be ensured by the positivity property of $\abs{\Q}$,  is enough for us to show the orbital instability of the solitary wave of \eqref{gdnls} in the degenerate case $\sigma\in \sts{1,~2}$ and $c=2z_0 \sqrt{\omega}$.

\end{enumerate}

\end{rema}

As stated above, the classical modulation analysis and the Virial identity in \cite{GSS1987, GSS1990, Shatah:NLKG:unstab, Shatah:NLS:unstab} doesn't work once again because of the degenerate property of the Hessian matrix $d''\sts{\omega, c}$ for the case $\sigma\in \sts{1,~2}$ and $c=2z_0 \sqrt{\omega}$, we now give more explanations about the refined modulation analysis and the refined Virial identity.


Firstly, we use the following decomposition
\begin{equation}\label{decomp_v2}
 u\sts{x} = \e^{-\i \gamma }
     \bsts{
    \Q
    +
    {\lambda}\phi_{\omega, c}
    +
    \rho({ \lambda})\B{\Q}  + \eps
    }\sts{ x-y },  \;\; \rho\sts{\lambda} = -\frac{\action{\B{\phi_{\omega, c}}}{\phi_{\omega, c}} }{2\action{\B{\Q}}{\B{\Q}}}\cdot \lambda^2
\end{equation}
for the function $u$ in the $\delta$-tube $ \Ucal\sts{\Q~,~\delta} $ of $\Q$
(see \eqref{tube} for the definition of the $\delta$-tube of $\Q$ and
the directions $\widetilde{\varphi}_{\omega, c}$, $\varphi_{\omega, c}$ and
$\Bcal \Q$ in Figure \ref{fig:decomp}), the above refined decomposition is
related with the landscape of the action functional $\Scal_{\omega, c}$ near $\Q$.
%
%
\vskip-0.1in
\begin{figure}[ht]
\includegraphics[width=8.5cm]{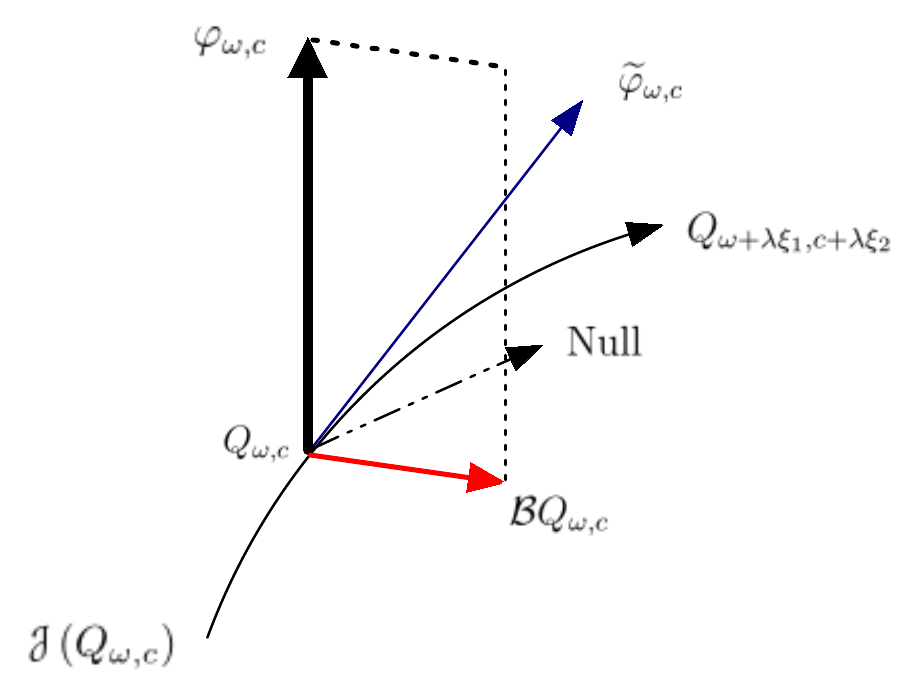}
\caption{ Decompostion of the $\delta$-tube $\Ucal\sts{\Q, ~\delta}$ up to small radiation $\eps$.}\label{fig:decomp}
\end{figure}
\vskip-0.1in

\begin{enumerate}
\item By the variational characterization of $\Q$, the action functional
$\Scal_{\omega,c} $
has the following properties
\begin{align*}
\Scal'_{\omega,c}\sts{\Q} =0,{\;\; \Scal''_{\omega,c}\sts{\Q} = \mathcal{L},}
\end{align*}
where the null space of the linearized operator
${\mathcal L}$
is characterized by
$\text{Null}\sts{\mathcal L}=\text{span}\{\i\Q, \partial_x \Q\}$.

By the finite degenerate property of the Hessian matrix of the function $d\sts{\omega, c}=\Scal_{\omega, c}\sts{\Q}$ for the case $\sigma\in\sts{1,~2}$ and  $c=2z_0\sqrt{\omega}$ in \cite{LSS2013},  there exists a direction ${\bm\xi}=\sts{\xi_1, \xi_2}^{\text{T}}\in \R^2\backslash \{0\}$ such that
    \begin{align*}
    d''\sts{\omega,c} \cdot {\bm\xi} =0, \;\;\text{and}\; \; \mathbf{d}_{\bm\xi}^{\tprime}
  \triangleq
  \left.
  \frac{\d^3}{\d \lambda^3}
  d\sts{\omega+\lambda\xi_1,c+\lambda\xi_2}
  \right|_{\lambda=0}
  \neq
  0,
    \end{align*}
where the first equality means that the quantity
 $$\Jcal_{\bm\xi}\sts{u}
=\xi_1\Mcal\sts{u}+\xi_2\Pcal\sts{u}$$
has the local equilibrium point $\Q$  along the curve $\{Q_{\omega + \lambda \xi_1, c+\lambda \xi_2}\}_{\lambda\in \R}$.
\item Up to the symmetries (spatial translation and phase rotation invariances), the first order approximation of $u$ to $\Q$ comes from the tangent vector $\widetilde{\varphi}_{\omega, c}$ of the curve $\{Q_{\omega+\lambda \xi_1, c+\lambda \xi_2}\}_{\lambda\in \R}$  at $\Q$,  and we have following degenerate result
    \begin{align}\label{dir_xi}
     \Scal^{\dprime}_{\omega, c}\sts{\Q}
    \sts{\widetilde{\phi}_{\omega, c}, ~\widetilde{\phi}_{\omega, c}} = 0.
    \end{align}

\item Up to the symmetries (spatial translation and phase rotation invariances), the second order approximation of $u$ to $\Q$ is the direction $\Bcal \Q  = 	\xi_1 \Q +\xi_2 \i\partial_{x}{\Q}$, which
is the steepest descent direction of the quantity $\Jcal_{\bm\xi}\sts{u}$
at $\Q$ along the curve $\{Q_{\omega+\lambda \xi_1, c+\lambda \xi_2}\}_{\lambda\in \R}$.  At the same time, we have the algebraic relations
    \begin{align*}\Scal''_{\omega,c}\sts{\Q} \widetilde{\varphi}_{\omega, c} =  -\mathcal{B} \Q, \text{~~and ~~}
 \Scal'''_{\omega,c}\sts{\Q}\sts{ \widetilde{\varphi}_{\omega, c} , \widetilde{\varphi}_{\omega, c} , \widetilde{\varphi}_{\omega, c} } + 3 \sts{\mathcal{B}\widetilde{\varphi}_{\omega, c} , \widetilde{\varphi}_{\omega, c} }= \mathbf{d}_{\bm\xi}^{\tprime},
\end{align*}
and the following non-degenerate result
\begin{align}
     \Scal^{\dprime}_{\omega, c}\sts{\Q}
    \sts{\Bcal \Q, \Bcal \Q}
    =
    \left<\mathcal{L}\Bcal \Q, \Bcal \Q \right>
    \not = 0. \label{dir_bq}
    \end{align}
By \eqref{dir_xi}, \eqref{dir_bq} and the degeneracy of $d''\sts{\omega, c}$ for the case $\sigma\in \sts{1,~2}$ and $c=2z_0\sqrt{\omega}$,  we need choose $\widetilde{\phi}_{\omega, c}$ as the primary perturbation direction and $\B{\Q}$ as the secondary perturbation direction instead of the independence between them, that is, we should take the following approximation \begin{align}\label{decomp_v1}
\Q
    +
    {\lambda}\widetilde{\phi}_{\omega, c}
    +
    \rho({ \lambda})\B{\Q},
    \end{align}
up to the spatial translation and phase rotation invariances, where $\rho\sts{\lambda}$ can be ensured by restriction of the solution on the level set $\Jcal\sts{\Q}$ and indeed can be determined in \eqref{decomp_v2} (also see Lemma \ref{lem:d1}).

Note that
 \begin{align*}
 \widetilde{\varphi}_{\omega, c} \not\bot \text{Null}\sts{\mathcal{L}},
 \end{align*}
 this makes us to renormalize the tangent vector $\widetilde{\varphi}_{\omega, c}$
 to make the approximation decomposition \eqref{decomp_v2} realizable (see Lemma \ref{lem:d2}).
    The renormalization $\varphi_{\omega, c}$ of the tangent vector
    $\widetilde{\varphi}_{\omega, c}$  means the projection of the tangent vector
    $\widetilde{\varphi}_{\omega, c}$ on the co-dimension subspaces of
    $\text{Null}\sts{\mathcal{L}}\oplus \text{span}\sts{\mathcal{B}\Q}$.
    This renormalization preserves the degeneracy of the action functional
    $\Scal_{\omega, c}$ along the direction $\varphi_{\omega, c}$, that is
        \begin{align*}
     \Scal^{\dprime}_{\omega, c}\sts{\Q}
    \sts{\phi_{\omega, c}, \phi_{\omega, c}} = 0,
    \end{align*}
    and we also have the following expressions
    \begin{equation*}
  \Scal_{\omega, c}\sts{\Q + {\lambda}\phi_{\omega, c}
        +
        \rho({ {\lambda}})\B{\Q}
    }
  =
  \Scal_{\omega, c}\sts{\Q} +\frac{1}{6}\mathbf{d}_{\bm\xi}^{\tprime}\cdot \lambda^3
  +\so{\abs{\lambda}^3},
\end{equation*}
\begin{equation*}
  \Scal_{\omega, c}\sts{\Q + {\lambda}\phi_{\omega, c}
        +
        \rho({ {\lambda}})\B{\Q}+\eps
    }
  =
  \Scal_{\omega, c}\sts{\Q} +\frac{1}{6}\mathbf{d}_{\bm\xi}^{\tprime}\cdot \lambda^3
  +\Scal^{\dprime}_{\omega, c}\sts{\Q}\sts{\eps,~\eps}
  +\so{\abs{\lambda}^3+\norm{\eps}_{H^1}^2},
\end{equation*}
which means that if the radiation term $\eps$ can be ignored,
$\Scal_{\omega, c}$ is a local monotone function as $\lambda$
under the special perturbation ${\lambda}\phi_{\omega, c}
        +
        \rho({ {\lambda}})\B{\Q}$ near $\Q$ ,
that is to say, the perturbation in the direction $\phi_{\omega, c}$
can play the dominant role under this special perturbation.
This definite property helps us to show the orbital instability of
the solitary waves of \eqref{gdnls} with the Virial argument in the degenerate case.

\item The remainder $\varepsilon$ in \eqref{decomp_v2} is not only small,
but also has some orthogonal structures, which makes the action functional
$\Scal_{\omega, c}$ to possess almost coercivity (or convex) (see Lemma \ref{lem:coer}).
\end{enumerate}

Secondly, in order to show the orbital instability of the solitary waves
$\Q\sts{x-ct}\e^{\i\omega t}$ of \eqref{gdnls} for the degenerate case
$\sigma \in \sts{1,~2}$ and $c=2z_0\sqrt{\omega}$,
we also turn to the effective monotonicity formula.
Since the quadratic term in $\lambda$ of
\begin{align*}
\action{\i\sts{\Q + {\lambda}~\phi_{\omega, c}
        +
        \rho({ {\lambda}})~\B{\Q}
        +
        \eps}_{x}
}{\varphi_{\omega, c}},
\end{align*}
 which corresponds to the term in \eqref{Iyt}, has the indefinite sign. By introducing the perturbation of $\varphi_{\omega, c}$ in the subspace $\text{Null}\sts{\mathcal{L}}$ to obtain the cancelation effect in the quadratic term of \eqref{Iyt} in $\lambda$, we can construct the refined Virial quantity
\begin{equation}
  \Iscr\sts{t} =\action{\i\eps\sts{t}}{ \phi_{\omega, c}\sts{ x} + \alpha\lambda\sts{t } \Q\sts{ x} + \beta\lambda\sts{ t}\i \partial_x\Q\sts{ x} },
\end{equation}
 which has the monotone property in some sense (see \eqref{eIt}), to show the orbital instability of the solitary wave $\Q\sts{x-ct}\e^{\i\omega t}$ of \eqref{gdnls}.

At last, the paper is organized as following.  In Section
\ref{sect:Prel},  we show the modulation decomposition of the functions in the tube
$\Ucal\sts{Q, \delta}$,  and the coercivity property of the linearized operator
$\Scal_{\omega, c}''\sts{\Q}$ on the subspace with finite co-dimension;
In Section \ref{sect:eps dyn}, we deduce the equation obeyed by the radiation term
$\eps\sts{t,x}$, and show the dynamical estimates of the parameters
$\lambda\sts{t}$, $y\sts{t}$ and $\gamma\sts{t}$ by the geometric structures of
the radiation term. In Section \ref{sect:main proof},
we first construct the solutions of \eqref{gdnls} near the solitary wave with
the refined geometric structures,
then show the orbital instability of the solitary wave of \eqref{gdnls}
in the degenerate case by the dynamical behaviors of the radiation and the parameters,
and the Virial argument.

In Appendix \ref{app:dds}, we prove that the action functional $\Scal_{\omega, c}$
is indeed of class $\mathcal{C}^3_{loc}\sts{H^1, ~\R }$ at $\Q$
because of the positivity of $\abs{\Q}$.

%
%
%

\section{Preliminaries}\label{sect:Prel}
In this section, we make some preparations to study the
orbital instability of solitary waves of \eqref{gdnls}
for the degenerate case $\sigma \in \sts{1,~2}$ and $c=2z_0\sqrt{\omega}$.
For $u\in H^1\sts{\R}$, we define the action functional $\Scal_{\omega,c}$
by
\begin{equation}
\label{S}
\Scal_{\omega,c}\sts{u}=\Ecal\sts{u}+\omega~\Mcal\sts{u}+c~\Pcal\sts{u},
\end{equation}
then it follows by the definitions of the energy, mass and momentum that $\Scal_{\omega,c}\in \Ccal^2\sts{H^1\sts{\R}; \R}$. By the variational characterization of $\Q$, we have $\Scal'_{\omega, c}\sts{\Q} =0$. For the convenience, we denote
$\Scal_{\omega, c}\sts{u} = \Qcal\sts{u}-\Ncal\sts{u}$ with
\begin{equation}\label{QN}
  \Qcal\sts{u}
  =
    \int\sts{   \frac{1}{2} \abs{{u}_x}^2
    +
    \frac{\omega}{2}~\abs{u}^2
    + \frac{c}{2}~\Re\sts{\i\bar{u}{u}_x} } \d x , \quad   \Ncal\sts{u}
  =
  \frac{1}{2\sigma + 2}
  \Re\int {\i\abs{u}^{2\sigma}\bar{u}{u}_x}\d x,
\end{equation}
and define
\begin{equation}
\label{d}
d\sts{\omega,c}= \Scal_{\omega,c}\sts{\Q}.
\end{equation}

\subsection{Basic properties of $\Scal_{\omega,c}$ and $d\sts{\omega,c}$}

By the definition of $\Scal_{\omega,c}$ in \eqref{S},
we know that $\Scal_{\omega,c}\in \Ccal^3\sts{H^1\sts{\R},~\R}$ for $\sigma\geq\frac{3}{2}$,
and  $\Scal_{\omega,c}\in \Ccal^2\sts{H^1\sts{\R},~\R}$  for $1<\sigma<\frac{3}{2}$.
In fact,  by the straightforward inspections, we can get the following identities
\begin{equation*}
\action{
  \Scal_{\omega,c}^{\prime}\sts{u}}{h}
  =
  \Re\int
  \sts{
    {u}_x\bar{h}_x + \omega~{u}\bar{h}
    + c~\i {u}_x\bar{h}
    - \i\abs{u}^{2\sigma}u_x\bar{h}
  }\d x ,
\end{equation*}
and
\begin{multline}\label{Sdp}
  \Scal_{\omega,c}^{\dprime}\sts{u}\sts{h,g}
  =
  \Re\int
  \sts{
   h_x\bar{g}_x + \omega~ h\bar{g}+c\i h_x\bar{g}
  }\d x
\\
   -
   \Re\int
   \sts{
   \i\abs{u}^{2\sigma}h_x\bar{g}
   +
    \i\sigma\abs{u}^{2\sigma-2}\bar{u}u_x h\bar{g}
    +
    \i\sigma\abs{u}^{2\sigma-2}{u}{u}_x \bar{h}\bar{g}
   }\d x,
\end{multline}
where $u\in H^1\sts{\R}$ and $g$, $h\in H^1\sts{\R}$. By means of the symmetries of \eqref{gdnls} and ODE theory, we have the explicit characterization of the kernel of the linearized operator $\Scal''_{\omega, c}\sts{\Q}$.
\begin{lemm}
The null space of the linearized operator $\mathcal{L}=\Scal''_{\omega, c}\sts{\Q}$ is characterized by
\begin{align*}
\text{Null}\sts{\mathcal{L}} = \text{span}\{\i \Q, ~ \partial_x \Q\}.
\end{align*}
\end{lemm}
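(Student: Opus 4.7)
The plan is to prove the two inclusions $\text{span}\{\i\Q,\partial_x\Q\}\subset\text{Null}(\mathcal{L})$ and $\text{Null}(\mathcal{L})\subset\text{span}\{\i\Q,\partial_x\Q\}$ separately. The first inclusion comes almost for free from the symmetries of the action functional. Since $\Scal_{\omega,c}(e^{\i\theta}\Q(\cdot-y))=\Scal_{\omega,c}(\Q)$ for all $(\theta,y)\in\R^2$, one has $\Scal'_{\omega,c}(e^{\i\theta}\Q(\cdot-y))=0$ as well; differentiating this identity in $\theta$ (resp.\ in $y$) at the origin, and using $\Scal''_{\omega,c}(\Q)=\mathcal{L}$ together with the chain rule, yields $\mathcal{L}(\i\Q)=0$ and $\mathcal{L}(\partial_x\Q)=0$. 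Alternatively, differentiating the ODE \eqref{Qeq} satisfied by $\Q$ in $x$ and multiplying \eqref{Qeq} by $\i$ produces the same two elements of the kernel.

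For the reverse inclusion I would reduce $\mathcal{L}\eta=0$ to a system of real linear ODEs by the standard gauge transformation that turns the first-order term in \eqref{Sdp} into a multiplicative potential. Writing $\Q=\Psi e^{\i\Theta}$ with $\Psi=\Psi_{\omega,c}$ and $\Theta(x)=\tfrac{c}{2}x-\tfrac{1}{2\sigma+2}\int_{-\infty}^x \Psi^{2\sigma}(y)\,\d y$, and substituting $\eta=e^{\i\Theta}(v_1+\i v_2)$ with real $v_1,v_2$ into the quadratic form \eqref{Sdp}, the cross terms involving $\Theta'$ combine so as to decouple (or triangularize) the system and produce two Schr\"odinger-type equations
\begin{equation*}
L_+ v_1=0,\qquad L_- v_2=0,
\end{equation*}
whose coefficients depend only on $\Psi$ and its derivatives through the ODE satisfied by $\Psi_{\omega,c}$. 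By direct inspection, $\Psi$ solves $L_-v_2=0$ (this is the image of $\i\Q$ under the gauge transformation) and $\partial_x\Psi$ solves $L_+ v_1=0$ (this is the image of $\partial_x\Q$).

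Since $L_\pm$ are second-order linear ODEs on $\R$, each has a two-dimensional solution space. I would then show that the second linearly independent solution of $L_\pm v=0$ (obtained e.g.\ from reduction of order via $\Psi\int \Psi^{-2}$, since $\Psi$ decays exponentially) grows exponentially at $\pm\infty$ and is therefore excluded from $H^1(\R)$. Consequently the only $H^1$ solutions are multiples of the ones arising from symmetries, giving $\text{Null}(\mathcal{L})=\text{span}\{\i\Q,\partial_x\Q\}$.

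The main technical obstacle is the first step of the second inclusion: performing the gauge transformation cleanly so that the resulting two operators $L_\pm$ are genuinely self-adjoint Schr\"odinger operators (not tangled with first-order terms), and identifying their coefficients in terms of $\Psi$ and the ODE $\Psi$ satisfies. Once this reduction is done, the exclusion of non-$H^1$ solutions is a routine Wronskian/asymptotic argument using the exponential decay $\Psi(x)\sim e^{-\sigma\sqrt{4\omega-c^2}|x|/(2\sigma)}$ that follows from \eqref{Phi}.
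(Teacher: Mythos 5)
Your first inclusion is fine, and note that the paper itself offers no argument for this lemma: it simply cites Proposition 3.6 of \cite{LSS2013}, so the comparison must be with that proof. The genuine gap is in the key step of your second inclusion: for the derivative nonlinearity the gauge transformation does \emph{not} reduce $\mathcal{L}$ to two decoupled scalar Schr\"odinger operators $L_\pm$. Writing $\Q=\Psi e^{\i\Theta}$ with $\Theta'=\tfrac{c}{2}-\tfrac{1}{2\sigma+2}\Psi^{2\sigma}$ and $\eta=e^{\i\Theta}w$, the coefficient of $w_x$ in $e^{-\i\Theta}\mathcal{L}\bigl(e^{\i\Theta}w\bigr)$ is $\i\bigl(c-2\Theta'-\Psi^{2\sigma}\bigr)=-\tfrac{\sigma}{\sigma+1}\,\i\,\Psi^{2\sigma}\neq 0$, and in the real variables $w=v_1+\i v_2$ this produces off-diagonal first-order couplings $\pm\tfrac{\sigma}{\sigma+1}\Psi^{2\sigma}\partial_x$ between $v_1$ and $v_2$. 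Choosing a different phase $\Phi$ to kill the $w_x$ term instead leaves the $\bar w$-term (coming from $\i\sigma\abs{\Q}^{2\sigma-2}\Q\,\partial_x\Q\,\bar\eta$) with the non-constant phase factor $e^{2\i(\Theta-\Phi)}$, which again couples $v_1$ and $v_2$ through zeroth-order off-diagonal potentials. A concrete symptom that your claimed structure cannot hold: under your own gauge the kernel element $\partial_x\Q=e^{\i\Theta}(\Psi_x+\i\Theta'\Psi)$ has \emph{both} real and imaginary components nonzero, so it is not ``the solution $\partial_x\Psi$ of $L_+v_1=0$'' of a decoupled system. The $L_+\oplus L_-$ splitting is special to equations whose ground state can be taken real-valued (e.g.\ standard NLS); here $\Q$ is genuinely complex.

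The overall strategy can be repaired without the decoupling. Keep the coupled $2\times 2$ second-order system for $(v_1,v_2)$. Its solution space is $4$-dimensional; since the potentials decay exponentially, the system is asymptotically constant-coefficient with characteristic exponents having real parts $\pm\sqrt{\omega-c^2/4}$, each of multiplicity two, so the subspace of solutions decaying as $x\to+\infty$ is exactly $2$-dimensional. Every $H^1$ kernel element lies in that subspace, and you have already exhibited two linearly independent elements of it, namely $\i\Q$ and $\partial_x\Q$; hence the kernel is exactly their span. This dimension count (or an equivalent exponential-dichotomy/Wronskian argument for the first-order $4\times 4$ system) is essentially what underlies Proposition 3.6 of \cite{LSS2013}, and it is what should replace your reduction-of-order step for scalar $L_\pm$, which as stated does not apply.
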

\begin{proof}
Please refer to Proposition $3.6$ in \cite{LSS2013}.
\end{proof}
In addition, although  $\Scal_{\omega,c}\not\in\Ccal^3\sts{H^1\sts{\R},~\R}$ for $1<\sigma<\frac{3}{2}$ in general, we can still obtain the following local smoothing result of $\Scal_{\omega,c}$ near $\Q$ because of the positivity of $\abs{\Q}$. Its proof is straightforward and presented in Appendix \ref{app:dds}.
\begin{lemm}
\label{lem:S3d} Let $\sigma \in (1, 2)$ and $c=2z_0\sqrt{\omega}$, then the functional $\Scal_{\omega,c}$ is  of class $\mathcal{C}^3$ at $\Q$, and we have for any $f$, $g$ and $h\in H^1\sts{\R}$,
\begin{align*}
  \Scal_{\omega,c}^{\dprime}\sts{\Q+f}\sts{h,g}
  -
  \Scal_{\omega,c}^{\dprime}\sts{\Q}\sts{h,g}
  -
  \Scal_{\omega,c}^{\tprime}\sts{\Q}\sts{f,h,g}=\so{ \norm{f}_{H^1} },
\end{align*}
as $\norm{f}_{H^1}$ goes to zero. In fact, for any $~f,~g,~h\in H^1$, we have
\begin{align*}
  & \; \Scal_{\omega,c}^{\tprime}\sts{\Q}\sts{f,h,g}
  \\
  = &\;
  \Ncal^{\tprime}\sts{\Q}\sts{f,h,g}
\\
  = &
  -
  \Re\int
  \i\sigma\abs{\Q}^{2\sigma-2}
  \sts{
    {\Q}\bar{f}
    +
    \bar{Q}_{\omega,c}{f}
  }{h}_x\bar{g} \; dx \nonumber
  \\
  & -
  \Re\int
  \i\sigma
    \abs{{\Q}}^{2\sigma-2}
  \sts
  {
     {\bar{Q}_{\omega,c}}{f}_x
  +
    \sigma
    \partial_{x}{\Q}\bar{f}
    +
    \sts{\sigma-1}
    \frac{{\bar{Q}_{\omega,c}}^2}{\abs{\Q}^2} \partial_{x}{\Q} {f}
  }h\bar{g}  \; dx  \nonumber
\\
& -
  \Re\int
  \i\sigma
    \abs{{\Q}}^{2\sigma-2}
  \sts
  {
    {{\Q}}{f}_x
  +
    \sigma
    \partial_{x}{\Q} f
    +
    \sts{\sigma-1}
    \frac{{Q}_{\omega,c}^2}{\abs{\Q}^{2}}
     \partial_{x}{\Q} \bar{f}
  }\bar{h}\bar{g} \; dx.  \numberthis \label{S3dQ}
\end{align*}
\end{lemm}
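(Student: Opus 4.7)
The plan is to reduce to the non-polynomial term $\Ncal\sts{u}= \frac{1}{2\sigma+2}\Re\int \i \abs{u}^{2\sigma}\bar{u} u_x\,\d x$ and then exploit the strict positivity of $\abs{\Q}$ on $\R$ to run a careful Taylor expansion in the $H^1$ perturbation $f$. Since $\Qcal$ is a continuous quadratic form on $H^1\sts{\R}$, it is $\Ccal^\infty\sts{H^1,\R}$, so the desired $\Ccal^3$ property at $\Q$ reduces to the same statement for $\Ncal$. The candidate third derivative \eqref{S3dQ} would be obtained by formally differentiating the integrand of \eqref{Sdp} in $u$, treating $u$ and $\bar u$ as independent variables. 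A direct inspection shows that every integrand in \eqref{S3dQ} carries at most the factor $\abs{\Q}^{2\sigma-2}$, since the modulus-one phases $\bar{\Q}^2/\abs{\Q}^2$ and $\Q^2/\abs{\Q}^2$ contribute no singularity; for $\sigma>1$ this factor lies in $L^\infty$, so combined with $\Q,\partial_x\Q\in L^\infty$, H\"older's inequality and $H^1\hookrightarrow L^\infty$, formula \eqref{S3dQ} defines a bounded trilinear form on $H^1$.

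To control the remainder
\[
R\sts{f}\sts{h,g} := \Ncal^{\dprime}\sts{\Q+f}\sts{h,g} - \Ncal^{\dprime}\sts{\Q}\sts{h,g} - \Ncal^{\tprime}\sts{\Q}\sts{f,h,g},
\]
I split $\R = A_f\cup A_f^c$ with $A_f := \ltl{x\in\R:\abs{f(x)}\le \abs{\Q(x)}/2}$. On $A_f$ the entire segment $\ltl{\Q+\theta f:\theta\in[0,1]}$ stays pointwise bounded below by $\abs{\Q}/2>0$, so the integrand of $\Ncal^{\dprime}\sts{\Q+\theta f}$ is $\Ccal^1$ in $\theta$ and the fundamental theorem of calculus gives
\[
R\sts{f}\sts{h,g}\big|_{A_f} = \int_0^1 \bsts{\Ncal^{\tprime}\sts{\Q+\theta f}-\Ncal^{\tprime}\sts{\Q}}\sts{f,h,g}\big|_{A_f}\,\d\theta.
\]
The coefficient difference $\abs{\Q+\theta f}^{2\sigma-2}-\abs{\Q}^{2\sigma-2}$ can be bounded by $C\abs{\Q}^{2\sigma-3}\abs{f}$ for $\sigma\ge 3/2$ (mean value theorem on $A_f$, where $\max\sts{\abs{\Q+\theta f},\abs{\Q}}\sim\abs{\Q}$) and by $C\abs{f}^{2\sigma-2}$ for $1<\sigma<3/2$ (sub-additivity of $t\mapsto t^{2\sigma-2}$ on $[0,\infty)$); an analogous pointwise bound applies to the phase factor $\sts{\bar{\Q}+\theta \bar f}^2/\abs{\Q+\theta f}^2-\bar{\Q}^2/\abs{\Q}^2$. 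H\"older's inequality, exponential decay of $\Q$ (ensuring $\abs{\Q}^{2\sigma-3}\in L^2$ when $\sigma\ge 3/2$), and $H^1\hookrightarrow L^\infty\cap L^2$ then yield $\abs{R\sts{f}\sts{h,g}}\big|_{A_f}\lesssim \norm{f}_{H^1}^{\min\ltl{2,\,2\sigma-1}}\norm{h}_{H^1}\norm{g}_{H^1}=\so{\norm{f}_{H^1}}\norm{h}_{H^1}\norm{g}_{H^1}$.

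On $A_f^c$, the pointwise inequality $\abs{\Q(x)}<2\abs{f(x)}\le 2C_0\norm{f}_{H^1}$ converts every occurrence of $\abs{\Q}^{2\sigma-2}$ into $\abs{f}^{2\sigma-2}$, and \eqref{QDQ} forces $\abs{\partial_x\Q}\lesssim\abs{\Q}\lesssim\abs{f}$ on the same set. Each of the three terms in $R\sts{f}\big|_{A_f^c}$ then becomes a product of the form $\abs{f}^{\alpha}\cdot\sts{\text{at most one derivative factor in }L^2,\text{ remaining factors in }L^\infty}$ with $\alpha\ge 2\sigma-1>1$, producing a bound $\lesssim\norm{f}_{H^1}^{2\sigma-1}\norm{h}_{H^1}\norm{g}_{H^1}=\so{\norm{f}_{H^1}}\norm{h}_{H^1}\norm{g}_{H^1}$. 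The main obstacle I foresee is obtaining the pointwise coefficient-difference estimate on $A_f$ uniformly in $\theta\in[0,1]$ across both sub-ranges of $\sigma$, since $z\mapsto\abs{z}^{2\sigma-2}$ fails to be Lipschitz at $z=0$ when $1<\sigma<3/2$; the sub-additivity estimate rescues the argument here, and the strict positivity $\abs{\Q(x)}>0$ for every $x\in\R$ is the essential ingredient throughout.
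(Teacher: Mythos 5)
Your argument is correct and is essentially the proof the paper gives in Appendix \ref{app:dds}: both reduce matters to the non-quadratic term $\Ncal$, both rest on the strict positivity of $\abs{\Q}$ together with \eqref{QDQ} (so that every $\partial_x\Q$ can be traded for $\Q$), and both control the pointwise Taylor remainder of the nonlinearity by a quantity that is quadratic where $f/\Q$ is small and of size $\abs{f/\Q}^{2\sigma}$ or $\abs{f/\Q}^{2\sigma-1}$ where it is large. The only real difference is packaging: the paper encodes your split $\R=A_f\cup A_f^c$ into uniform complex inequalities such as $\babs{\abs{1+z}^{2\sigma}-1-\sigma z-\sigma\bar z}\le C\sts{\abs{z}^2+\abs{z}^{2\sigma}}$, valid for all $z\in\C$ and applied with $z=f/\Q$, which lets it write the second-order Taylor polynomial directly and avoid both the fundamental-theorem-of-calculus step in $\theta$ and the sub-additivity trick for $1<\sigma<3/2$. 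One small slip: $\abs{\Q}^{2\sigma-3}\notin L^2$ when $\sigma=3/2$, but you never actually need that claim, since the increment of $z\mapsto\abs{z}^{2\sigma-2}z$ along the segment is already $\lesssim\abs{\Q}^{2\sigma-2}\abs{f}$ on $A_f$ and $\abs{\Q}^{2\sigma-2}\in L^{\infty}$ for $\sigma>1$.
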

\begin{rema}From the proof of Lemma \ref{lem:S3d},  we have the following identity
\begin{equation}\label{S3dsym}
  \Scal_{\omega,c}^{\tprime}\sts{\Q}\sts{f_1,f_2,f_3}
  =
  \Scal_{\omega,c}^{\tprime}\sts{\Q}
  \sts{f_{\tau_1},f_{\tau_2},f_{\tau_3}},
\end{equation}
where $f_1$, $f_2$, $f_3 \in H^1\sts{\R}$ and $\ltl{\tau_1,\tau_2,\tau_3}$ is the permutation of the set $\ltl{1,2,3}$. Hence, for any $f$, $g\in H^1\sts{\R}$, we have
\begin{multline*}
  \Scal_{\omega,c}^{\tprime}\sts{\Q}\sts{f,f,g}
  =
  -2\sigma\Re\int\i\abs{\Q}^{2\sigma-2}\sts{{\Q}\bar{f} {f}_x
  + \bar{Q}_{\omega,c}{f} {f}_x
  +
    \sigma
    \partial_{x}{\Q} \bar{f} f}\bar{g}\;
  \\
  -
  \sigma\sts{\sigma-1}\Re\int
  \i
    \abs{\Q}^{2\sigma-4}
  \sts
  {
    \bar{Q}_{\omega,c}^2 \partial_{x}{\Q} {f}f
    +
    {Q}_{\omega,c}^2 \partial_{x}{\Q} \bar{f}\bar{f}
  }\bar{g}
  \numberthis\label{S3dffg}
\end{multline*}
In addition, we have the Taylor series expression by Lemma
\ref{lem:S3d},
\begin{multline}
\label{TylS}
  \Scal_{\omega,c}\sts{\Q+h}
  =
  \Scal_{\omega,c}\sts{\Q}
    +
  \frac{1}{2}\Scal_{\omega,c}^{\dprime}\sts{\Q}\sts{h,h}
  +
  \frac{1}{6}\Scal_{\omega,c}^{\tprime}\sts{\Q}\sts{h,h,h}
  +
  \so{\norm{h}_{H^1}^3},
\end{multline}
as $\norm{h}_{H^1}$ goes to zero, where we used the fact that $\Scal'_{\omega, c}\sts{\Q} =0$.
\end{rema}

Now we turn to investigate the properties of the function $d\sts{\omega,c} = \Scal_{\omega,c}\sts{\Q}$ for $4\omega > c^2$. By the definition of $\Scal_{\omega,c}$ and the equation \eqref{Qeq}, we have
\begin{equation}
\label{dp}
d'\sts{\omega,c}
=
\begin{pmatrix}
	\partial_{\omega}{d\sts{\omega,c}} & \partial_{c}{d\sts{\omega,c}}
\end{pmatrix},
\end{equation}
and
\begin{equation}
\label{ddp}
d''\sts{\omega,c}
=
\begin{bmatrix}
	\partial_{\omega}^{2}{d\sts{\omega,c}} & \partial^2_{\omega,c}{d\sts{\omega,c}}
	\\[6pt]
	\partial^2_{c,\omega}{d\sts{\omega,c}} & \partial_{c}^{2}{d\sts{\omega,c}}
\end{bmatrix},
\end{equation}
where
\begin{equation*}
  \partial_{\omega}{d\sts{\omega,c}}
  =
  \Mcal\sts{Q_{\omega,c}},
  \qquad
  \partial_{c}{d\sts{\omega,c}}
  =
  \Pcal\sts{Q_{\omega,c}}.
\end{equation*}
Since $\det d''\sts{\omega,c}=0$ for $c=2z_0\sqrt{\omega}$, there exists a vector ${\bm \xi}=\sts{\xi_1,\xi_2}^{\text{T}}\in\R^2\backslash \{\bm 0\}$ such that
\begin{equation}
\label{dg}
  d''\sts{\omega,c}\cdot {\bm \xi}={\mathbf 0},
\end{equation}
which means that the vector ${\bm\xi}$ is an eigenvector of the Hessian matrix $ d''\sts{\omega,c}$ corresponding to the zero eigenvalue. Consequently, by \eqref{ddp} and \eqref{dg}, we have the following orthogonal relations
\begin{align*}
  \action{\Q}{\tphi_{\omega,c}}
  =
  &
  \left.
  \frac{\d}{\d \lambda}
  \Mcal\sts{Q_{\omega+\lambda\xi_1,c+\lambda\xi_2}}
  \right|_{\lambda=0}
  \\
  =
  &
  \left.
  \frac{\d}{\d \lambda}
  \partial_{\omega}{d\sts{\omega+\lambda\xi_1,c+\lambda\xi_2}}
  \right|_{\lambda=0}
  =0,\numberthis \label{tdg1}
\end{align*}
and
\begin{align*}
  \action{\i \partial_{x}{\Q}}{\tphi_{\omega,c}}
  =
  &
  \left.
  \frac{\d}{\d \lambda}
  \Pcal\sts{Q_{\omega+\lambda\xi_1,c+\lambda\xi_2}}
  \right|_{\lambda=0}
  \\
  =
  &
  \left.
  \frac{\d}{\d \lambda}
  \partial_{c}{d\sts{\omega+\lambda\xi_1,c+\lambda\xi_2}}
  \right|_{\lambda=0}
  =0,\numberthis \label{tdg2}
\end{align*}
where
\begin{equation}
\label{tphi}
\tphi_{\omega, c}\sts{x}
\triangleq \left.\frac{\d}{\d\lambda}Q_{\omega+ \lambda \xi_1, c + \lambda \xi_2}\sts{x}\right|_{\lambda=0}
=
\xi_1 \partial_{\omega}{\Q}\sts{x}+\xi_2 \partial_{c}{\Q}\sts{x}.
\end{equation}

Let us define the functional
\begin{align}\label{J}
 \Jcal_{\bm\xi}\sts{u}
	=\xi_1\Mcal\sts{u}+\xi_2\Pcal\sts{u}
\end{align} and its derivative
\begin{equation*}
	\B{u} =  \Jcal'_{\bm \xi} \sts{u} = \xi_1 u +\xi_2 \i\partial_{x}{u}.
\end{equation*}
Based on the above definitions, we have the following identity
\begin{align*}
\Jcal_{\bm \xi}\sts{u} = \frac12 \action{\B u}{u}.
\end{align*}
 By differentiating \eqref{Qeq} with respect to $\omega$ and $c$ and integrating by parts, we also have the following algebraic identity
\begin{equation}\label{exptddp}
  \Scal_{\omega,c}^{\dprime}\sts{\Q}
  \sts{\tphi_{\omega,c},\psi}
  = - \action{\B{\Q}}{\psi},\quad \text{for any~~}
  \psi\in H^1\sts{\R},
\end{equation}
where $\Scal_{\omega,c}^{\dprime}$ is the second derivative of $\Scal_{\omega, c}$ and is defined by \eqref{Sdp}.

Because of the degenerate property of the Hessian matrix $d''\sts{\omega , c}$ for $c=2z_0\sqrt{\omega}$ in \eqref{dg}, we need explore the third order derivatives of $d\sts{\omega,c}$ with respect to $\omega$ and $c$. In fact, by straightforward calculations,  we have
\begin{lemm}Let $\sigma\in \sts{1,~2}$, $c=2z_0\sqrt{\omega}$, and ${\bm\xi}$ be a zero eigenvector of the Hessian matrix $ d''\sts{\omega,c}$ as that in \eqref{dg}, then we have
 \begin{equation}
\label{ndeg}
  \mathbf{d}_{\bm\xi}^{\tprime}
  \triangleq
  \left.
  \frac{\d^3}{\d \lambda^3}
  d\sts{\omega+\lambda\xi_1,~c+\lambda\xi_2}
  \right|_{\lambda=0}
  \neq
  0,
\end{equation}
where $\mathbf{d}_{\bm\xi}^{\tprime}$ can be expressed by
\begin{equation}
\label{exptdtp}
  \mathbf{d}_{\bm\xi}^{\tprime}
  =
  \Scal_{\omega,c}^{\tprime}\sts{\Q}
  \sts{
  \tphi_{\omega,c},~
  \tphi_{\omega,c},~
  \tphi_{\omega,c} }
  +
  3
  \action{\B{\tphi_{\omega,c}}}{\tphi_{\omega,c}}.
\end{equation}
\end{lemm}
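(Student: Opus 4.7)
The plan is to parametrize the curve $Q_\lambda := Q_{\omega+\lambda\xi_1,\,c+\lambda\xi_2}$ and differentiate $d(\lambda) := d(\omega+\lambda\xi_1,\,c+\lambda\xi_2)$ three times, exploiting two structural facts. First, the shifted action decomposes as $\Scal_{\omega+\lambda\xi_1,\,c+\lambda\xi_2}(u) = \Scal_{\omega,c}(u) + \lambda\,\Jcal_{\bm\xi}(u)$, so each $Q_\lambda$ satisfies the critical-point relation $\Scal'_{\omega,c}(Q_\lambda) + \lambda\,\B{Q_\lambda} = 0$. Second, $\B{\cdot} = \Jcal'_{\bm\xi}(\cdot)$ is self-adjoint with $\Jcal_{\bm\xi}(u) = \frac{1}{2}\action{\B{u}}{u}$. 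Chain rule combined with criticality collapses the chain term, giving $d'(\lambda) = \Jcal_{\bm\xi}(Q_\lambda)$, and two more differentiations yield
\begin{equation*}
d''(\lambda) = \action{\B{Q_\lambda}}{\partial_\lambda Q_\lambda}, \qquad d'''(\lambda) = \action{\B{\partial_\lambda Q_\lambda}}{\partial_\lambda Q_\lambda} + \action{\B{Q_\lambda}}{\partial_\lambda^{2} Q_\lambda}.
\end{equation*}
Evaluating at $\lambda = 0$, with $\partial_\lambda Q_\lambda|_{\lambda=0} = \tphi_{\omega,c}$ by \eqref{tphi}, gives
\begin{equation*}
\mathbf{d}_{\bm\xi}^{\tprime} = \action{\B{\tphi_{\omega,c}}}{\tphi_{\omega,c}} + \action{\B{\Q}}{\partial_\lambda^{2} Q_\lambda|_{\lambda=0}}.
\end{equation*}

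To identify the cross term I would differentiate the critical-point relation $\Scal'_{\omega,c}(Q_\lambda) + \lambda\,\B{Q_\lambda} \equiv 0$. The first derivative at $\lambda = 0$ recovers $\mathcal{L}\tphi_{\omega,c} = -\B{\Q}$, consistent with \eqref{exptddp}; the second derivative at $\lambda=0$ yields
\begin{equation*}
\Scal^{\tprime}_{\omega,c}(\Q)(\tphi_{\omega,c},\tphi_{\omega,c},\,\cdot\,) + \mathcal{L}\,\partial_\lambda^{2} Q_\lambda|_{\lambda=0} + 2\,\B{\tphi_{\omega,c}} = 0
\end{equation*}
as an equality of linear functionals on $H^1$. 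Pairing with $\tphi_{\omega,c}$ and using self-adjointness of $\mathcal{L}$ to convert $\action{\mathcal{L}\,\partial_\lambda^{2} Q_\lambda|_{\lambda=0}}{\tphi_{\omega,c}} = \action{\partial_\lambda^{2} Q_\lambda|_{\lambda=0}}{\mathcal{L}\tphi_{\omega,c}} = -\action{\partial_\lambda^{2} Q_\lambda|_{\lambda=0}}{\B{\Q}}$ isolates
\begin{equation*}
\action{\B{\Q}}{\partial_\lambda^{2} Q_\lambda|_{\lambda=0}} = \Scal^{\tprime}_{\omega,c}(\Q)(\tphi_{\omega,c},\tphi_{\omega,c},\tphi_{\omega,c}) + 2\,\action{\B{\tphi_{\omega,c}}}{\tphi_{\omega,c}},
\end{equation*}
and substituting back reproduces exactly the claimed expression \eqref{exptdtp}. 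The non-vanishing $\mathbf{d}_{\bm\xi}^{\tprime} \neq 0$ is the finite-order degeneracy already established by Liu, Simpson and Sulem in \cite{LSS2013}, which I would quote directly.

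The main technical obstacle is that $\Scal_{\omega,c}$ is only globally $\Ccal^2$ on $H^1$ when $1 < \sigma < 3/2$, so a priori neither the symbol $\Scal^{\tprime}_{\omega,c}(\Q)$ nor the second differentiation of the critical-point equation has meaning. This is precisely where Lemma \ref{lem:S3d} enters: the strict positivity of $|\Q|$ (reflected in \eqref{QDQ}) makes $\Scal_{\omega,c}$ locally $\Ccal^3$ at $\Q$ with the explicit trilinear form \eqref{S3dQ}, so all derivatives above are legitimate in a small $H^1$-neighborhood of $\Q$. The required $\Ccal^2$ dependence $\lambda \mapsto Q_\lambda$ into $H^1$, needed so that $\partial_\lambda^{2} Q_\lambda|_{\lambda=0}$ is well defined, follows from the explicit formulas \eqref{Q}--\eqref{Phi} together with the uniform lower bound on $|\Q|$.
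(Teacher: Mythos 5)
Your derivation is correct, and it checks out against the paper's surrounding identities: $d'(\lambda)=\Jcal_{\bm\xi}(Q_\lambda)$ reproduces $\partial_\omega d=\Mcal(\Q)$, $\partial_c d=\Pcal(\Q)$; the first differentiation of the critical-point relation recovers $\mathcal{L}\tphi_{\omega,c}=-\B{\Q}$, i.e.\ \eqref{exptddp}; and $d''(0)=\action{\B{\Q}}{\tphi_{\omega,c}}=0$ is consistent with \eqref{tdg1}--\eqref{tdg2}. The difference from the paper is that the paper offers no argument at all here --- its proof is the single line ``See Lemma $1$ in \cite{Fukaya2017}'' --- whereas you supply the full computation (differentiate $d$ three times along the curve, differentiate the Euler--Lagrange relation $\Scal'_{\omega,c}(Q_\lambda)+\lambda\B{Q_\lambda}=0$ twice, pair with $\tphi_{\omega,c}$ and use self-adjointness of $\mathcal{L}$ and $\mathcal{B}$ to eliminate $\partial_\lambda^2 Q_\lambda|_{\lambda=0}$). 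This is almost certainly the same computation carried out in the cited reference, so the route is not conceptually different, but your write-up makes the paper self-contained and, importantly, flags the two points a bare citation glosses over: that for $1<\sigma<3/2$ the third derivative of $\Scal_{\omega,c}$ only exists locally at $\Q$ (Lemma \ref{lem:S3d}, via positivity of $\abs{\Q}$), and that $\lambda\mapsto Q_\lambda$ must be $\Ccal^2$ into $H^1$ for $\partial_\lambda^2 Q_\lambda|_{\lambda=0}$ to make sense. The one ingredient you do not prove is the non-vanishing $\mathbf{d}_{\bm\xi}^{\tprime}\neq 0$, which you quote from \cite{LSS2013}; that is exactly what the paper itself does in its introduction, so this is an acceptable (and unavoidable, absent the explicit computation of the integrals) reliance on the literature.
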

\begin{proof}
See Lemma $1$ in \cite{Fukaya2017}.
\end{proof}
Without loss of generality, we will assume that
\begin{equation}\label{pd3}
  \mathbf{d}_{\bm\xi}^{\tprime}<0
\end{equation}
in the context. Otherwise, it holds by reversing $\bm\xi$ by $-\bm\xi$. In addition, we will drop the subscript with respect to $\omega$, $c$ and $\bm \xi$ if without confusion in the rest of the paper.

\subsection{Geometric decomposition of $u$ and landscape of $\Scal$ near $Q$} As noted in the introduction,  the higher order approximation of the solution $u$ of  \eqref{gdnls} to $Q$ should be taken for the degenerate case $\sigma\in \sts{1,~2}$ and $c=2z_0\sqrt{\omega}$. We firstly renormalize the tangent vector $\widetilde{\varphi}=\widetilde{\varphi}_{\omega, c}$ of the curve $\{Q_{\omega+\lambda \xi_1, c+\lambda \xi_2}\}_{\lambda\in \R}$ at $\Q$ since the structure
    $$\widetilde{ \varphi}_{\omega, c} \; \bot\; \text{Null}(\mathcal{L}) $$
    doesn't hold. This renormalization  means to project  $\widetilde{\varphi}_{\omega, c}$ on $\text{Null}\sts{\mathcal{L}}^{\bot}\cap \text{span}\{\mathcal{B}Q\}^{\bot}$. Now let
\begin{align}
\label{vphi}
  \varphi
=
  {\tphi}
  -
  a
  {\partial_x Q}
  -
  b
  {\i{Q}}
\end{align}
where $a$ and $b$ are defined by \eqref{quan:ab}, that is, they satisfy
  \begin{equation*}
    a \action{\i {Q}}  {\partial_x Q}
    +
    b \action{\i{Q}}{\i{Q}}
    =  \action{\i {Q}}{\tphi},
  \end{equation*}
  and
  \begin{equation*}
    a \action{{Q}_{x}}  {\partial_x Q}
    +
    b \action{{Q}_{x}}{\i{Q}}
    =
    \action{{Q}_{x}}{\tphi},
  \end{equation*}
then it is easy to see that
\begin{equation}\label{pvtQ}
  \phi\;\bot\;\i Q
  \quad
  \text{and}
  \quad
  \phi\; \bot \;\partial_x Q.
\end{equation}
Secondly, by \eqref{tdg1} and \eqref{tdg2}, we have
\begin{equation}  \label{dgf}
  \action{\phi}{Q}=0
  \quad\text{and}\quad
  \action{\phi}{\i \partial_x Q}=0.
\end{equation}
By \eqref{exptddp}, \eqref{exptdtp} and $\text{Null}\sts{\mathcal{L}}=\text{span}\{\i Q, \partial_x Q\}$, we get the following identities
\begin{equation}\label{expddp}
  \Scal^{\dprime}\sts{Q}\sts{\phi,\psi}
  = - \action{\B{Q}}{\psi},\quad \text{for any~~}
  \psi\in H^1,
\end{equation}
and
\begin{equation}
\label{expdtp}
  \Scal^{\tprime}\sts{Q}
  \sts{
  \phi,~
  \phi,~
  \phi }
  +
  3
  \action{\B{\phi}}{\phi} = \mathbf{d}_{\bm\xi}^{\tprime}
 \not = 0.
\end{equation}
(See Proposition $1$ in \cite{Fukaya2017}).

After this, we introduce a lemma, which means that we can consider  $\varphi$ as the primary perturbation direction  and $\Bcal Q$ as the secondary perturbation direction instead of the independence between them.
\begin{lemm}
\label{lem:d1} There exist $0< \bar{\lambda}_0\ll 1$ and a $\mathcal{C}^1$ function $\widetilde{\rho}:\sts{-\bar{\lambda}_0~,~\bar{\lambda}_0}\mapsto\R$ such that if $\lambda\in\sts{-\bar{\lambda}_0~,~\bar{\lambda}_0}$, then we have \begin{equation*}
  \Jcal\sts{Q+\lambda\phi + \widetilde{\rho}\sts{\lambda}\B{Q}}=\Jcal\sts{Q},
\end{equation*}
where
\begin{equation}
\label{tp12}
  \widetilde{\rho}\sts{\lambda} = -\frac{\action{\B{\phi}}{\phi} }{2\action{\B{Q}}{\B{Q}}}\lambda^2 +\so{\lambda^2},
\quad
  \frac{\d}{\d\lambda}\widetilde{\rho}\sts{\lambda} = -\frac{\action{\B{\phi}}{\phi} }{\action{\B{Q}}{\B{Q}}}\lambda +\so{\abs{\lambda}}.
\end{equation}
%
\end{lemm}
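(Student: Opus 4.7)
The plan is to apply the Implicit Function Theorem to the scalar equation $F(\lambda,\rho) \triangleq \Jcal(Q + \lambda\phi + \rho\Bcal Q) - \Jcal(Q) = 0$ on a neighbourhood of $(0,0)$ in $\R^2$. Since $\Jcal$ is a quadratic functional (linear combination of $\Mcal$ and $\Pcal$), it is smooth on $H^1$, and so $F$ is of class $\mathcal{C}^\infty$ in both variables. Clearly $F(0,0) = 0$, so the only nontrivial hypothesis to check is that $\partial_\rho F(0,0) \neq 0$.

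Using the identity $\Jcal(u) = \tfrac12 \action{\Bcal u}{u}$ and the fact that $\Bcal$ is symmetric with respect to the real inner product (since $\i\partial_x$ is), the second-order Taylor expansion of $\Jcal$ at $Q$ is exact:
\begin{equation*}
F(\lambda,\rho) = \lambda\action{\Bcal Q}{\phi} + \rho\action{\Bcal Q}{\Bcal Q}
+ \tfrac12\lambda^2 \action{\Bcal\phi}{\phi} + \lambda\rho\action{\Bcal\phi}{\Bcal Q}
+ \tfrac12 \rho^2 \action{\Bcal(\Bcal Q)}{\Bcal Q}.
\end{equation*}
The crucial observation is that the coefficient of $\lambda$ vanishes: indeed
$\action{\Bcal Q}{\phi} = \xi_1 \action{Q}{\phi} + \xi_2 \action{\i\partial_x Q}{\phi} = 0$
by \eqref{dgf}. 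Moreover, $\Bcal Q = \xi_1 Q + \xi_2 \i\partial_x Q \not\equiv 0$ because $\bm\xi \neq \bm 0$ and $\{Q, \i\partial_x Q\}$ are linearly independent in $H^1(\R)$, so $\action{\Bcal Q}{\Bcal Q} > 0$. This gives $\partial_\rho F(0,0) = \action{\Bcal Q}{\Bcal Q} > 0$, and the Implicit Function Theorem yields a $\mathcal{C}^1$ (in fact $\mathcal{C}^\infty$) function $\widetilde{\rho}$ defined on some interval $(-\bar\lambda_0,\bar\lambda_0)$ with $\widetilde{\rho}(0)=0$ and $F(\lambda,\widetilde{\rho}(\lambda))=0$ there.

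To extract the asymptotics \eqref{tp12}, I substitute $\rho = \widetilde{\rho}(\lambda)$ into the exact expression above. Since the $\lambda$-coefficient vanishes, $\widetilde{\rho}$ must be at least of order $\lambda^2$; writing $\widetilde{\rho}(\lambda) = c_2\lambda^2 + o(\lambda^2)$ and matching $\lambda^2$-coefficients forces
\begin{equation*}
c_2 \action{\Bcal Q}{\Bcal Q} + \tfrac12 \action{\Bcal\phi}{\phi} = 0,
\qquad
c_2 = -\frac{\action{\Bcal\phi}{\phi}}{2\action{\Bcal Q}{\Bcal Q}},
\end{equation*}
which gives the first formula in \eqref{tp12}. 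Differentiating $F(\lambda,\widetilde{\rho}(\lambda))=0$ in $\lambda$ and using $\widetilde{\rho}(0)=\widetilde{\rho}'(0)=0$ with $\partial_\rho F(0,0)=\action{\Bcal Q}{\Bcal Q}$ yields the second formula in \eqref{tp12}.

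There is essentially no hard step here: the entire argument reduces to the vanishing of the linear term $\action{\Bcal Q}{\phi}$ produced by the renormalization \eqref{vphi}, combined with a standard IFT. The only point requiring a little care is to note that $\action{\Bcal Q}{\Bcal Q}\neq 0$ (equivalently, $\Bcal Q \neq 0$), which follows from $\bm\xi \neq \bm 0$ and the linear independence of $Q$ and $\i\partial_x Q$ in the real inner product on $H^1(\R,\mathbb{C})$.
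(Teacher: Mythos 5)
Your proof is correct and follows essentially the same route as the paper: both apply the Implicit Function Theorem to $G(\lambda,\rho)=\Jcal(Q+\lambda\phi+\rho\Bcal Q)-\Jcal(Q)$, using $\partial_\rho G(0,0)=\action{\Bcal Q}{\Bcal Q}\neq 0$ and the orthogonality $\action{\Bcal Q}{\phi}=0$ from \eqref{dgf} to kill the linear term and extract the quadratic asymptotics. The only cosmetic difference is that you exploit the exact quadratic expansion of $\Jcal$ and match coefficients, whereas the paper differentiates $g(\lambda)=G(\lambda,\widetilde{\rho}(\lambda))\equiv 0$ twice at $\lambda=0$; these are the same computation.
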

\begin{proof}
It is the consequence of the Implicit Function Theorem, and please see Lemma $3$ in \cite{Fukaya2017}. We show the proof here for the convenience. In fact, we define the function
\begin{equation*}
  G\sts{\lambda, {\rho}}
  :=
  \Jcal\sts{Q+\lambda\,\phi +  {\rho}\,\B{Q}}
  -
  \Jcal\sts{Q}.
\end{equation*}
Firstly, it is easy to see that
$$G\sts{0,0}=0.$$ Secondly, the straightforward calculations imply that
\begin{equation*}
  \left.
  \frac{\partial}{\partial {\rho}}
  G\sts{\lambda,{\rho}}
  \right|_{\sts{\lambda, {\rho}}=\sts{0,0}}
  =\action{\B{Q}}{\B{Q}}\neq 0.
\end{equation*}
The Implicit Function Theorem implies that there exists
$0<\bar{\lambda}_0\ll 1$ and a $\Ccal^{\infty}$ function $\widetilde{\rho}:\sts{-\bar{\lambda}_0, \bar{\lambda}_0}\to\R$ such that
\begin{equation*}
  g\sts{\lambda}:=G\sts{\lambda,\widetilde{\rho}\sts{\lambda}}\equiv 0,
  \quad\text{for all }
  \lambda\in\sts{-\bar{\lambda}_0, \bar{\lambda}_0}.
\end{equation*}
Thirdly, by differentiating the function
$g\sts{\lambda}$ with respect to
$\lambda$ at point $0$, we have
\begin{equation*}
 \left.\frac{\d}{\d\lambda}g\sts{\lambda}\right|_{\lambda=0}
 =0\quad
 \text{and}\quad
 \left.\frac{\d^2}{\d\lambda^2}g\sts{\lambda}\right|_{\lambda=0}
 =0.
\end{equation*}
On one hand, we have
\begin{equation*}
  \action{\B{Q}}{\phi}
  +
  \action{\B{Q}}{\B{Q}}
  \left.
  \frac{\d}{\d\lambda}\widetilde{\rho}\sts{\lambda}
  \right|_{\lambda=0}=0,
\end{equation*}
This together with \eqref{dgf} implies that
\begin{equation*}
  \left.
  \frac{\d}{\d\lambda}\widetilde{\rho}\sts{\lambda}
  \right|_{\lambda=0}=0.
\end{equation*}
On the other hand, we have
 \begin{equation*}
 0
 =
 \left.\frac{\d^2}{\d\lambda^2}g\sts{\lambda}\right|_{\lambda=0}
 =
 \left.
    \frac{\d^2}{\d\lambda^2}\widetilde{\rho}\sts{\lambda}
 \right|_{\lambda=0}
 \action{\B{Q}}{\B{Q}}+\action{\B{\phi}}{\phi}.
\end{equation*}
By the Fundamental Theorem of Calculus, it is easy to see that
\eqref{tp12} holds. This completes the proof of the lemma.
\end{proof}

From now on, we will take
\begin{equation}
\label{rho}
  \rho\sts{\lambda} = -\frac{\action{\B{\phi}}{\phi} }{2\action{\B{Q}}{\B{Q}}}\lambda^2,
\end{equation}
and for $\delta>0$, we define the $\delta$-tube $\Ucal\sts{Q,~\delta}$ near $Q$  as
\begin{equation}\label{tube}
  \Ucal\sts{Q~,~\delta} = \ltl{u\in H^1\sts{\R}~:~ \inf_{y\in\R,\gamma\in\R}\norm{u\sts{\cdot}- Q\sts{\cdot-y}\e^{\i\gamma} }_{H^1}<\delta }.
\end{equation}
By the Implicit Function Theorem, we have the following refined modulation decomposition of the function $u$ in the $\delta$-tube $\Ucal\sts{Q,~\delta}$.
\begin{lemm}
\label{lem:d2}
    There exist $0<\bar{\delta}_1\ll 1$ and a unique $\Ccal^1$ map
    $
        \sts{ y,~\gamma,~\lambda }
        :
        \Ucal\sts{Q~,~\bar{\delta}_1}
        \mapsto
        \R
        \times
        \R\times\R,
    $
    such that if $u\in  \Ucal\sts{Q~,~\overline{\delta}_1}$,   and $\eps_{y,\gamma,\lambda }\sts{x}$ is defined by
     \begin{equation*}
  \eps_{y,\gamma,\lambda}\sts{x}
  \triangleq
  u\sts{x+y}\e^{\i\gamma}
  -
  \bsts{Q+ \lambda\phi+\rho\sts{\lambda}\B{Q}}\sts{x},
\end{equation*}
where $\rho\sts{\lambda}$ is determined by  \eqref{rho}, then we have
    \begin{equation*}
      \eps_{y,\gamma,\lambda}
      \; \bot\; \i Q,
      \quad
      \eps_{y,\gamma,\lambda}
      \; \bot\; Q_x
      \quad
      \text{and}
      \quad
      \eps_{y,\gamma,\lambda}
      \; \bot\; \phi.
    \end{equation*}
    Moreover, there exists a constant $C> 0$ such that if $u\in  \Ucal\sts{Q~,~\delta}$ with $0<\delta<\overline{\delta}_1$, then
    \begin{equation*}
      \norm{\eps_{y,\gamma,\lambda}}_{H^1}
      +
      \abs{y}  +
      \abs{\gamma} +
      \abs{\lambda} \leq C \delta.
    \end{equation*}
\end{lemm}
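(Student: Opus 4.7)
The plan is to derive the decomposition from the Implicit Function Theorem, viewing the three orthogonality conditions as a system of equations to be solved for the three parameters $(y, \gamma, \lambda)$ with $u$ as the input variable. I would define a $\Ccal^1$ map $F : H^1(\R) \times \R^3 \to \R^3$ by
\[
F(u, y, \gamma, \lambda) := \bigl(\action{W}{\i\Q},\; \action{W}{\partial_x\Q},\; \action{W}{\phi}\bigr),
\quad
W := u(\cdot+y)\e^{\i\gamma} - \Q - \lambda\phi - \rho(\lambda)\B{\Q},
\]
with $\rho(\lambda)$ as in \eqref{rho}. Clearly $F(\Q, 0, 0, 0) = \bm 0$, and the entire existence/uniqueness part of the lemma reduces to showing that $J_0 := \partial_{(y,\gamma,\lambda)}F(\Q, 0, 0, 0)$ is invertible.

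The central computation is the block structure of $J_0$. Using $\rho(0) = \rho'(0) = 0$ from \eqref{tp12}, the partial derivatives of $W$ at the base point in the directions $y$, $\gamma$, $\lambda$ are $\partial_x\Q$, $\i\Q$, $-\phi$ respectively. The orthogonality $\phi\;\bot\;\i\Q$ and $\phi\;\bot\;\partial_x\Q$ from \eqref{pvtQ} then force a zero third column (except its diagonal entry) and a zero third row (except its diagonal entry), giving
\[
J_0 = \begin{pmatrix}
\action{\partial_x\Q}{\i\Q} & \|\Q\|_{L^2}^2 & 0 \\[3pt]
\|\partial_x\Q\|_{L^2}^2 & \action{\i\Q}{\partial_x\Q} & 0 \\[3pt]
0 & 0 & -\|\phi\|_{L^2}^2
\end{pmatrix},
\]
so that $\det J_0 = -\|\phi\|_{L^2}^2 \bigl(\action{\partial_x\Q}{\i\Q}^2 - \|\Q\|_{L^2}^2\|\partial_x\Q\|_{L^2}^2\bigr)$.

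The main obstacle is showing this determinant is nonzero. By the Cauchy--Schwarz inequality for the real inner product, the parenthesised expression is nonpositive, with equality iff $\i\Q$ and $\partial_x\Q$ are proportional over $\R$; but $\{\i\Q, \partial_x\Q\}$ is a basis of the two-dimensional space $\text{Null}(\mathcal{L})$, so these two vectors are linearly independent over $\R$ and the inequality is strict. Likewise $\phi \not\equiv 0$: if $\phi$ vanished, then $\tphi \in \text{Null}(\mathcal{L})$, whence \eqref{exptddp} would yield $\B{\Q} = -\mathcal{L}\tphi = 0$, contradicting the fact that $\B{\Q} = \xi_1\Q + \xi_2 \i\partial_x\Q$ is not identically zero (as $\Q$ is nowhere vanishing by \eqref{QDQ} while $\partial_x\Q$ does vanish at some point, so no nontrivial real-linear combination can cancel). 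Hence $\det J_0 > 0$, and the Implicit Function Theorem produces $\bar\delta_1 > 0$ and a $\Ccal^1$ map $(y, \gamma, \lambda)$ on the $H^1$-ball $\{u: \|u-\Q\|_{H^1} < \bar\delta_1\}$ realising the three orthogonality conditions, together with a Lipschitz bound at the base point.

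Finally, I would extend this local result to the full $\delta$-tube $\Ucal(\Q, \delta)$ (after possibly shrinking $\bar\delta_1$) via the translation and phase-rotation invariance of the problem: for $u\in\Ucal(\Q, \delta)$, first choose $(y_0, \gamma_0)$ with $\|u(\cdot+y_0)\e^{\i\gamma_0} - \Q\|_{H^1} < \delta$ (a near-minimizer of the infimum defining the tube), apply the local IFT to $v:= u(\cdot+y_0)\e^{\i\gamma_0}$ to produce parameters $(y_1, \gamma_1, \lambda)$ for $v$, and then set $y = y_0 + y_1$, $\gamma = \gamma_0 + \gamma_1$. Uniqueness of $(y, \gamma, \lambda)$ in the tube follows from the uniqueness assertion in the IFT, and the Lipschitz estimate from the IFT at the base point, together with $\|v - \Q\|_{H^1} < \delta$, yields $\|\eps_{y,\gamma,\lambda}\|_{H^1} + |y_1| + |\gamma_1| + |\lambda| \lesssim \delta$, which combined with the freedom to shrink $\bar\delta_1$ gives the claimed estimate $\|\eps_{y,\gamma,\lambda}\|_{H^1} + |y| + |\gamma| + |\lambda| \le C\delta$.
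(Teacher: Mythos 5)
Your proposal is correct and follows essentially the same route as the paper: set up the three orthogonality conditions as a map $\bm F(u;y,\gamma,\lambda)$, compute $\partial_{(y,\gamma,\lambda)}\bm F$ at $(Q,0,0,0)$ using $\partial_y\eps=Q_x$, $\partial_\gamma\eps=\i Q$, $\partial_\lambda\eps=-\phi$, observe that \eqref{pvtQ} makes the Jacobian block-diagonal with determinant $-\bigl(\norm{Q_x}_2^2\norm{Q}_2^2-\action{Q_x}{\i Q}^2\bigr)\norm{\phi}_2^2\neq 0$, and invoke the Implicit Function Theorem; your added justifications (strict Cauchy--Schwarz from linear independence of $\i Q$ and $Q_x$ in $\mathrm{Null}(\mathcal L)$, the reduction of the tube to a ball via the symmetries) are details the paper leaves implicit. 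One minor slip: your argument that $\B{Q}\neq 0$ cannot rest on ``$\partial_x Q$ vanishes at some point'', since \eqref{QDQ} shows $\abs{\partial_x Q}\geq c_0\abs{Q}>0$ everywhere; instead, note that $\xi_2\,\i\partial_x Q=-\xi_1 Q$ with $\xi_2\neq 0$ would force $Q(x)=Q(0)\e^{\i(\xi_1/\xi_2)x}$, contradicting the decay of $Q$, so the conclusion $\phi\not\equiv 0$ still stands.
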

\begin{proof}
Let us define the following vector-valued functional of
$\sts{u,~{y},~{\gamma},~{\lambda}}$:
  \begin{align*}
    \bm{F}\sts{u;~ {y}, {\gamma}, {\lambda}}
    &
    =
    \sts{
        {F}^1\sts{u;~ {y}, {\gamma}, {\lambda}},
        ~{F}^2\sts{u;~ {y}, {\gamma}, {\lambda}},
        ~{F}^3\sts{u;~ {y}, {\gamma}, {\lambda}}
    },
  \end{align*}
where
  \begin{align*}
    {F}^1\sts{u;~ {y}, {\gamma}, {\lambda}}
    =&\;
    \Re\int \eps_{ {y}, {\gamma}, {\lambda}}~\overline{Q_{x}},
    \\
    {F}^2\sts{u;~ {y}, {\gamma}, {\lambda}}
    =&\;
    \Re\int \eps_{ {y}, {\gamma}, {\lambda}}~\overline{\i Q},
    \\
    {F}^3\sts{u;~ {y}, {\gamma}, {\lambda}}
    =& \;
    \Re\int \eps_{ {y}, {\gamma}, {\lambda}}~\overline{\phi}.
  \end{align*}
It is easy to find that
  \begin{align*}
    \frac{\partial\eps_{ {y}, {\gamma}, {\lambda}}}{\partial y}\Big|_{\sts{y,~\gamma,~\lambda,~u}=\sts{0,~0,~0,~Q}}
    =& \;
    {Q}_{x},\\
      \frac{\partial\eps_{ {y}, {\gamma}, {\lambda}}}{\partial \gamma}\Big|_{\sts{y,~\gamma,~\lambda,~u}=\sts{0,~0,~0,~Q}}
    = & \;
    \i Q , \\
        \frac{\partial\eps_{ {y}, {\gamma}, {\lambda}}}{\partial \lambda}\Big|_{\sts{y,~\gamma,~\lambda,~u}=\sts{0,~0,~0,~Q}}
    = & \;
    -\phi.
  \end{align*}
Thus, the Jacobian matrix of the vector-valued function $\bm{F}$ at $\sts{0,~0,~0,~Q}$ is
  \begin{equation*}
\frac{\partial \bm{F}}{\partial \sts{y,\gamma,\lambda}  }
    \sts{0,0,0, Q}
    =
    \begin{bmatrix}
      \action{{Q}_{x}}{{Q}_{x}} &
      \action{{Q}_{x}}{\i{Q}} &
      -\action{{Q}_{x}}{\phi}
      \\[6pt]
      \action{\i {Q}}{{Q}_{x}} & \action{\i{Q}}{\i{Q}} &
      -\action{\i {Q}}{\phi}
      \\[6pt]
      \action{\phi}{{Q}_{x}} & \action{\phi}{\i{Q}} &
      -\action{\phi}{\phi}
    \end{bmatrix},
  \end{equation*}
which implies that it is non-degenerate by \eqref{pvtQ} since
  \begin{equation*}
    \det \frac{\partial \bm{F}}{\partial \sts{y,\gamma,\lambda}  }
    \sts{0,0,0, Q}
    =
    -
    \sts{
        \norm{Q_x}_{2}^2\norm{Q}_{2}^2
        -
        \action{{Q}_{x}}{\i{Q}}^2
    }
    \norm{\phi}_{2}^2
    \neq 0.
  \end{equation*}
We can obtain the result by the Implicit Function Theorem.
\end{proof}

\begin{rema}We often call the remainder $\eps_{ y, \gamma, \lambda}$ the radiation of $u$. From the above proof, we used the non-degenerate property of the matrix
$
 \partial \bm{F}/ \partial \sts{y,\gamma,\lambda}
$
at $\sts{0,0,0, Q}$, which is ensured by the fact that
$$\varphi \; \bot\; \text{Null}(\mathcal{L}).$$
 That is the reason why we need replace $\widetilde{\varphi}$ with its renormalization $\varphi$ in the approximation of $u$ to $Q$ up to the spatial translation and phase rotation invariances.
\end{rema}

From the above decomposition, the radiation term $\eps$ of $u$ has orthogonal relation with $\varphi$ if  $u$ is in the $\delta$-tube $\Ucal\sts{Q,~\delta}$. In fact, the interaction between the radiation term $\eps$ and  $\Bcal Q$ is more smaller  than $\norm{\eps}_{H^1}$ if $u$ has the same $\Jcal$ quantity with $Q$, that is,
\begin{lemm}\label{lem:eBQ}
There exist $0<\bar{\delta}_2\ll 1$ and $0<\bar{\lambda}_2\ll 1$ such that if $\abs{ {\lambda}}\leq\bar{\lambda}_2$ and $\eps \in H^1\sts{\R}$ with  $\norm{\eps}_{H^1}\leq\bar{\delta}_2$ satisfy
\begin{equation*}
  \Jcal\sts{Q + {\lambda}\phi
        +
        \rho({ {\lambda}})\B{Q}+\eps
    }=\Jcal\sts{Q},
\end{equation*}
where $\rho({ {\lambda}})$ is determined by \eqref{rho}, then we have
\begin{equation}\label{eeBQ}
  {\action{\eps}{\B{Q}}}
  =
  \bo{
    \abs{ {\lambda}} \norm{\eps}_{H^1}
    +
    \norm{\eps}_{H^1}^2
  }
    +
    \so{ {\lambda}^2}.
\end{equation}
\end{lemm}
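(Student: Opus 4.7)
The plan is to exploit the fact that $\Jcal$ is a quadratic form on $H^1$ with associated symmetric operator $\Bcal$, so that the constraint $\Jcal\sts{Q+\lambda\varphi+\rho(\lambda)\Bcal Q+\eps}=\Jcal\sts{Q}$ becomes an \emph{algebraic} identity from which $\action{\eps}{\Bcal Q}$ can be isolated as a sum of controllable terms. No differential structure, no Implicit Function Theorem — just a careful bilinear expansion.

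First I would set $u_0 := Q+\lambda\varphi+\rho(\lambda)\Bcal Q$ and use the quadratic expansion
\begin{equation*}
\Jcal\sts{u_0+\eps}=\Jcal\sts{u_0}+\action{\Bcal u_0}{\eps}+\Jcal\sts{\eps}.
\end{equation*}
Expanding $\action{\Bcal u_0}{\eps}=\action{\Bcal Q}{\eps}+\lambda\action{\Bcal\varphi}{\eps}+\rho(\lambda)\action{\Bcal^{2}Q}{\eps}$ and invoking the hypothesis $\Jcal\sts{u_0+\eps}=\Jcal\sts{Q}$ lets me solve
\begin{equation*}
\action{\Bcal Q}{\eps}=-\lambda\action{\Bcal\varphi}{\eps}-\rho(\lambda)\action{\Bcal^{2}Q}{\eps}-\bsts{\Jcal\sts{u_0}-\Jcal\sts{Q}}-\Jcal\sts{\eps}.
\end{equation*}
Since $\rho(\lambda)=\bo{\lambda^{2}}$, the first two terms on the right are $\bo{\abs{\lambda}\norm{\eps}_{H^{1}}}$, and $\Jcal\sts{\eps}=\bo{\norm{\eps}_{H^{1}}^{2}}$.

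It then remains to verify that $\Jcal\sts{u_0}-\Jcal\sts{Q}=\so{\lambda^{2}}$, which is the only nontrivial point. Expanding once more,
\begin{equation*}
\Jcal\sts{u_0}-\Jcal\sts{Q}=\lambda\action{\Bcal Q}{\varphi}+\rho(\lambda)\action{\Bcal Q}{\Bcal Q}+\tfrac{\lambda^{2}}{2}\action{\Bcal\varphi}{\varphi}+\bo{\lambda^{3}}.
\end{equation*}
The linear term vanishes because \eqref{dgf} gives $\action{\varphi}{Q}=0$ and $\action{\varphi}{\i\partial_x Q}=0$, hence $\action{\Bcal Q}{\varphi}=\xi_{1}\action{Q}{\varphi}+\xi_{2}\action{\i\partial_x Q}{\varphi}=0$. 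The two $\lambda^{2}$ terms cancel exactly thanks to the choice of $\rho(\lambda)$ in \eqref{rho}: indeed $\rho(\lambda)\action{\Bcal Q}{\Bcal Q}+\tfrac{\lambda^{2}}{2}\action{\Bcal\varphi}{\varphi}=0$ by construction. The residual contribution from $\lambda\rho(\lambda)\action{\Bcal\varphi}{\Bcal Q}$ and $\tfrac{\rho^{2}}{2}\action{\Bcal^{2}Q}{\Bcal Q}$ is $\bo{\lambda^{3}}=\so{\lambda^{2}}$.

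Putting these bounds together yields \eqref{eeBQ} immediately. The only subtle step is the exact cancellation of the $\lambda^{2}$ terms, which is the very reason for the specific normalization of $\rho(\lambda)$ in \eqref{rho}; everything else is bookkeeping using the symmetry of $\Bcal$ and the orthogonalities \eqref{dgf}. No smallness of $\bar\lambda_{2}$ or $\bar\delta_{2}$ is really required for the algebra, but they keep the $\so{\cdot}$ remainders meaningful.
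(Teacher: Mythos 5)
Your proposal is correct and is essentially the paper's own argument: both exploit that $\Jcal$ is an exact quadratic form with symmetric operator $\Bcal$, kill the linear term via $\action{\Bcal Q}{\phi}=0$ from \eqref{dgf}, use the exact cancellation $\rho(\lambda)\action{\Bcal Q}{\Bcal Q}+\lambda^{2}\Jcal\sts{\phi}=0$ built into \eqref{rho}, and bound the remaining cross terms. The only difference is organizational (you expand around $u_0$ and then treat $\Jcal\sts{u_0}-\Jcal\sts{Q}$ separately, while the paper expands everything around $Q$ at once), which changes nothing of substance.
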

\begin{proof}
By the definition of the functional $\Jcal$ in \eqref{J}, we have
%
  \begin{align*}
    0
    =
    &\;
    \Jcal
    \sts{
        Q + {\lambda}\phi
        +
        \rho({ {\lambda}})\B{Q}+\eps
    }-\Jcal\sts{Q}
    \\
    =
    &\;
    \action{\B{Q}}{  {\lambda}\phi
        +
        \rho({ {\lambda}})\B{Q}+\eps }
    +
    \Jcal\sts{  {\lambda}\phi
        +
        \rho({ {\lambda}})\B{Q}+\eps }
    \\
    =
    &\;
    \action{\B{Q}}{\eps}
    +\rho( {\lambda})\action{\B{Q}}{\B{Q}}
    \\
    &\;+ {\lambda}^2 \Jcal\sts{\phi}
    + {\lambda}\rho( {\lambda})\action{\B{Q}}{\B{\phi}}
    + {\lambda}\action{\B{\phi}}{\eps}
    +\rho( {\lambda})\action{\B{Q}}{\eps}
    +\rho( {\lambda})^2\Jcal\sts{\B{Q}}
    +\Jcal\sts{\eps},
  \end{align*}
where we used the fact that $\action{\phi}{\B{Q}}=0$ in third equality by \eqref{dgf}. This implies that
\begin{align*}
  \action{\eps}{\B{Q}}
  =
  &
    -\rho( {\lambda})\action{\B{Q}}{\B{Q}}
    - {\lambda}^2 \Jcal\sts{\phi}
    \\
    &  - {\lambda}\rho( {\lambda})\action{\B{Q}}{\B{\phi}}
    - {\lambda}\action{\B{\phi}}{\eps}
    -\rho( {\lambda})\action{\B{Q}}{\eps}
    - \rho( {\lambda})^2\; \Jcal\sts{\B Q}
    - \Jcal\sts{\eps}.
    \numberthis \label{eBQ}
\end{align*}
By the fact that $ \Jcal \sts{\phi} = \frac12\action{\B{\phi}}{\phi} $, \eqref{rho} and the Cauchy-Schwarz inequality, we have the following estimates
\begin{align*}
  \rho( {\lambda})\action{\B{Q}}{\B{Q}}
    +  {\lambda}^2\Jcal\sts{\phi}
  =& \;0, \\
  {\lambda}\rho( {\lambda})\action{\B{Q}}{\B{\phi}}
  = \;
  \bo{ \abs{\lambda}^3}, &  \quad   \rho( {\lambda})^2\;\Jcal\sts{\B Q}
  =  \;
  \bo{
    {\lambda}^4
  },
\\
 {\lambda}\action{\B{\phi}}{\eps}
= \;
\bo{
  \abs{ {\lambda}} \norm{\eps}_{H^1}
}, \quad
  \rho( {\lambda})\action{\B{Q}}{\eps}
  =& \;
  \bo{
    {\lambda}^2 \norm{\eps}_{H^1}
  }, \quad
 \Jcal\sts{\eps}
=  \;
\bo{
      \norm{\eps}^2_{H^1}
}.
\end{align*}
Inserting the above estimates into \eqref{eBQ}, 
we can obtain the result, and complete the proof of the lemma.
\end{proof}

The following result shows that the action functional $\Scal$ has definite dynamics at $Q$ along the special perturbation ${\lambda}\phi + \rho({ {\lambda}})\B{Q}$ although $d\sts{\omega, ~c}=\Scal_{\omega,c}\sts{\Q}$ has the degenerate Hessian matrix $d''\sts{\omega,~c}$ for $c=2z_0\sqrt{\omega}$.
\begin{lemm}
\label{lem:tyl1}
There exists $0<\bar{\lambda}_3\ll 1$ such that for any $\lambda\in\R$ satisfies $0<\abs{\lambda}<\bar{\lambda}_3$,
we have
\begin{equation*}
  \Scal\sts{Q + {\lambda}\phi
        +
        \rho({ {\lambda}})\B{Q}
    }
  =
  \Scal\sts{Q} +\frac{1}{6}\mathbf{d}_{\bm\xi}^{\tprime}\cdot \lambda^3
  +\so{\abs{\lambda}^3},
\end{equation*}
where $\rho\sts{\lambda}$ and $\mathbf{d}_{\bm\xi}^{\tprime}$ are defined by \eqref{rho} and \eqref{ndeg} respectively.
\end{lemm}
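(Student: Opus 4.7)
The plan is to apply the local $\mathcal{C}^3$ Taylor expansion from Lemma~\ref{lem:S3d} at $Q$ to the perturbation $h = \lambda\phi + \rho(\lambda)\B{Q}$, and then exploit the three algebraic identities already established: $\Scal'_{\omega,c}(Q)=0$, the representation $\Scal''(Q)(\phi,\psi) = -\action{\B{Q}}{\psi}$ from \eqref{expddp}, and the cubic identity $\Scal'''(Q)(\phi,\phi,\phi)+3\action{\B{\phi}}{\phi} = \mathbf{d}_{\bm\xi}^{\tprime}$ from \eqref{expdtp}. Note first that since $\rho(\lambda)=O(\lambda^2)$, we have $\|h\|_{H^1} = O(|\lambda|)$, so the remainder $o(\|h\|_{H^1}^3)$ in the Taylor expansion \eqref{TylS} becomes $o(|\lambda|^3)$.

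For the quadratic contribution, I would expand
\[
  \tfrac12\Scal''(Q)(h,h) = \tfrac12\lambda^2\Scal''(Q)(\phi,\phi) + \lambda\rho(\lambda)\Scal''(Q)(\phi,\B{Q}) + \tfrac12\rho(\lambda)^2\Scal''(Q)(\B{Q},\B{Q}).
\]
By \eqref{expddp} and the orthogonality $\langle\B{Q},\phi\rangle = 0$ (which follows from \eqref{dgf} via the definition $\B{Q}=\xi_1 Q + \xi_2\i\partial_x Q$), the first term vanishes, the second equals $-\lambda\rho(\lambda)\action{\B{Q}}{\B{Q}}$, and the third is $O(\rho(\lambda)^2) = O(\lambda^4)$. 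Plugging in $\rho(\lambda) = -\action{\B{\phi}}{\phi}\lambda^2/(2\action{\B{Q}}{\B{Q}})$ gives
\[
  \tfrac12\Scal''(Q)(h,h) = \tfrac12\lambda^3 \action{\B{\phi}}{\phi} + O(\lambda^4).
\]

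For the cubic contribution, the multilinearity and the size $\rho(\lambda)=O(\lambda^2)$ yield
\[
  \tfrac16\Scal'''(Q)(h,h,h) = \tfrac16\lambda^3\Scal'''(Q)(\phi,\phi,\phi) + O(\lambda^4),
\]
since every term containing at least one $\rho(\lambda)$ factor carries a power $\lambda^k$ with $k\geq 4$. Summing the quadratic and cubic contributions, the $\lambda^3$ coefficient becomes $\tfrac12\action{\B{\phi}}{\phi}+\tfrac16\Scal'''(Q)(\phi,\phi,\phi) = \tfrac16\bigl[3\action{\B{\phi}}{\phi}+\Scal'''(Q)(\phi,\phi,\phi)\bigr] = \tfrac16\mathbf{d}_{\bm\xi}^{\tprime}$ by \eqref{expdtp}, which together with the $o(|\lambda|^3)$ remainder gives the claim.

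I do not expect any genuine obstacle: the delicate point is merely bookkeeping of the error terms. One must verify that (i) the quadratic contribution of the $\rho\B{Q}$ piece is of order $\lambda^4$ and not $\lambda^3$, and (ii) the cross terms $\Scal'''(Q)(\phi,\phi,\B{Q})$ etc.\ appear only with a factor $\rho(\lambda)$ and hence contribute $O(\lambda^4)$. Both are immediate from the definitions, so the proof reduces to a short algebraic computation relying on the $\mathcal{C}^3$ smoothness at $Q$ provided by Lemma~\ref{lem:S3d}.
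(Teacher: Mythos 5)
Your proposal is correct and follows essentially the same route as the paper: Taylor expansion \eqref{TylS} at $Q$ with $h=\lambda\phi+\rho(\lambda)\B{Q}$, vanishing of $\Scal''(Q)(\phi,\phi)$ via \eqref{expddp} and \eqref{dgf}, the cross term $-\lambda\rho(\lambda)\action{\B{Q}}{\B{Q}}=\tfrac{\lambda^3}{2}\action{\B{\phi}}{\phi}$, and recombination through \eqref{expdtp}. No gaps.
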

\begin{proof}
Since $\abs{\lambda}$ is small enough, it follows from \eqref{rho} that
$
  \norm{ {\lambda}\phi
            +
            \rho({ {\lambda}})\B{Q}}_{H^1}
$
is small enough. Hence we have by the Taylor series expression in \eqref{TylS} that
\begin{align}
        \Scal\sts{Q + {\lambda}\phi
        +
        \rho({ {\lambda}})\B{Q}
    }
    = & \;
    \Scal\sts{Q}
    +
    \frac{1}{2}\Scal^{\dprime}\sts{Q}
    \sts{ {\lambda}\phi
        +
        \rho({ {\lambda}})\B{Q},~
         {\lambda}\phi
        +
        \rho({ {\lambda}})\B{Q}}
        \nonumber
        \\
        &
        +
        \frac{1}{6}
        \Scal^{\tprime}\sts{Q}
        \sts{
         {\lambda}\phi
        +
        \rho({ {\lambda}})\B{Q},~
         {\lambda}\phi
        +
        \rho({ {\lambda}})\B{Q},~
         {\lambda}\phi
        +
        \rho({ {\lambda}})\B{Q}
        }
        \nonumber
        \\
        &
        +
        \so{ \norm{ {\lambda}\phi
            +
            \rho({ {\lambda}})\B{Q}}_{H^1}^3 }
        ,
        \label{ES1}
\end{align}
where we used the fact that $\Scal'\sts{Q}=0$ in the right hand side.

Firstly, by \eqref{dgf} and \eqref{expddp}, we have
\begin{equation*}
  \Scal^{\dprime}\sts{Q}
    \sts{ \lambda\phi,~\lambda\phi}
  =-\lambda^2\action{\B{Q}}{\phi}=0,
\end{equation*}
and by \eqref{dgf} and \eqref{rho}, we have
\begin{equation*}
  \Scal^{\dprime}\sts{Q}
    \sts{ \rho({ {\lambda}})\B{Q},~\lambda\phi}
    =
    -\lambda\rho({ {\lambda}})
    \action{\B{Q}}{\B{Q}}
    =
    \frac{\lambda^3}{2}\action{\B{\phi}}{\phi},
\end{equation*}
\begin{equation*}
  \Scal^{\dprime}\sts{Q}
    \sts{
        \rho({ {\lambda}})\B{Q},~
        \rho({ {\lambda}})\B{Q}
    }
  =
  \so{\abs{\lambda}^3}.
\end{equation*}
Therefore, we obtain
\begin{align*}
\frac{1}{2}\Scal^{\dprime}\sts{Q}
    \sts{ {\lambda}\phi
        +
        \rho({ {\lambda}})\B{Q},~
         {\lambda}\phi
        +
        \rho({ {\lambda}})\B{Q}}
    =&
    \frac{\lambda^3}{2}\action{\B{\phi}}{\phi}
    +
    \so{\abs{\lambda}^3}.\numberthis \label{eS2}
\end{align*}

Secondly, by \eqref{rho}, we have
\begin{equation*}
  \rho\sts{\lambda}=\so{\abs{\lambda}},
\end{equation*}
which implies that
\begin{equation}
\label{eS3}
\frac{1}{6}
        \Scal^{\tprime}\sts{Q}
        \sts{
         {\lambda}\phi
        +
        \rho({ {\lambda}})\B{Q},~
         {\lambda}\phi
        +
        \rho({ {\lambda}})\B{Q},~
         {\lambda}\phi
        +
        \rho({ {\lambda}})\B{Q}
        }=
  \frac{\lambda^3}{6}
  \Scal^{\tprime}\sts{Q}
     \sts{\phi,~\phi,~\phi}
  +
  \so{\abs{\lambda}^3}.
\end{equation}

Lastly, it is easy to see from \eqref{rho} that
\begin{equation}
\label{eSo}
 \norm{ {\lambda}\phi
            +
            \rho({ {\lambda}})\B{Q}}_{H^1}^3  = \bo{\abs{\lambda}^3}.
\end{equation}
Inserting \eqref{eS2}, \eqref{eS3} and \eqref{eSo} into \eqref{ES1}, we obtain
\begin{align*}
      \Scal\sts{Q + {\lambda}\phi
        +
        \rho({ {\lambda}})\B{Q}
    }
      =
  & \;  \Scal\sts{Q}+
  \frac{\lambda^3}{6}
  \Big({
    3\action{\B{\phi}}{\phi}
    +
    \Scal^{\tprime}\sts{Q}
        \sts{
         \phi,~\phi,~\phi
        }
  }\Big)
  +
  \so{\abs{\lambda}^3}
  \\
  =
  & \;  \Scal\sts{Q}+
  \frac{1}{6}\mathbf{d}_{\bm\xi}^{\tprime}\cdot \lambda^3
  +
  \so{\abs{\lambda}^3},
\end{align*}
where we used \eqref{expdtp} in the second equality. This concludes the proof.
\end{proof}

\begin{lemm}
\label{lem:tyl2}
There exist $0<\bar{\delta}_4\ll 1$ and $0<\bar{\lambda}_4\ll 1$ such that if $\eps\in H^1\sts{\R}$ with $\norm{\eps}_{H^1}\leq\bar{\delta}_4$
and $\lambda$ with $\abs{\lambda}\leq\bar{\lambda}_4$
satisfy
\begin{align}\label{EpBQ}
    \action{\eps}{\B{Q}}
    =
    \bo{    \abs{\lambda}\norm{\eps}_{H^1}
        +   \norm{\eps}_{H^1}^2
    }
    +\so{\lambda^2},
\end{align}
then we have
\begin{equation}
\label{tyl2}
  \Scal\sts{Q + {\lambda}\phi
        +
        \rho({ {\lambda}})\B{Q}+\eps
    }
  =
  \Scal\sts{Q} +\frac{1}{6}\mathbf{d}_{\bm\xi}^{\tprime}\cdot \lambda^3
  +\Scal^{\dprime}\sts{Q}\sts{\eps,~\eps}
  +\so{\abs{\lambda}^3+\norm{\eps}_{H^1}^2},
\end{equation}
where $\rho\sts{\lambda}$ and $\mathbf{d}_{\bm\xi}^{\tprime}$ are defined by \eqref{rho} and \eqref{ndeg} respectively.
\end{lemm}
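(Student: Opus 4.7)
The plan is to expand $\Scal\sts{Q+h}$ with $h=g+\eps$, where $g := \lambda\phi+\rho(\lambda)\B{Q}$, via the Taylor formula \eqref{TylS} at $Q$. Since $\norm{g}_{H^1}=\bo{\abs{\lambda}}$, the remainder $\so{\norm{h}_{H^1}^3}$ is $\so{\abs{\lambda}^3+\norm{\eps}_{H^1}^3}$. Multilinearly expanding $\tfrac12\Scal^{\dprime}\sts{Q}\sts{h,h}$ and $\tfrac16\Scal^{\tprime}\sts{Q}\sts{h,h,h}$ produces a sum of terms classified by the numbers of $g$'s and $\eps$'s, and I would handle each class separately.

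First, the purely $g$-dependent part $\tfrac12\Scal^{\dprime}\sts{Q}\sts{g,g}+\tfrac16\Scal^{\tprime}\sts{Q}\sts{g,g,g}$ is precisely what Lemma \ref{lem:tyl1} evaluates: it equals $\tfrac{1}{6}\mathbf{d}_{\bm\xi}^{\tprime}\cdot\lambda^3+\so{\abs{\lambda}^3}$, producing the leading cubic in \eqref{tyl2}. The purely quadratic $\eps$-contribution gives the $\eps^2$-term in \eqref{tyl2}, while the pure cubic $\Scal^{\tprime}\sts{Q}\sts{\eps,\eps,\eps}=\bo{\norm{\eps}_{H^1}^3}=\so{\norm{\eps}_{H^1}^2}$ by the continuity of $\Scal^{\tprime}\sts{Q}$ as a trilinear form on $H^1\times H^1\times H^1$ provided by Lemma \ref{lem:S3d}.

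The delicate step is the linear-in-$\eps$ mixed term $\Scal^{\dprime}\sts{Q}\sts{g,\eps}=\lambda\,\Scal^{\dprime}\sts{Q}\sts{\phi,\eps}+\rho(\lambda)\,\Scal^{\dprime}\sts{Q}\sts{\B{Q},\eps}$. I would invoke the algebraic identity \eqref{expddp} to rewrite $\Scal^{\dprime}\sts{Q}\sts{\phi,\eps}=-\action{\B{Q}}{\eps}$, and then use the nonlinear orthogonality hypothesis \eqref{EpBQ} to bound $\action{\B{Q}}{\eps}=\bo{\abs{\lambda}\norm{\eps}_{H^1}+\norm{\eps}_{H^1}^2}+\so{\lambda^2}$. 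Combined with $\rho(\lambda)=\bo{\lambda^2}$ from \eqref{rho} and the continuity of $\Scal^{\dprime}\sts{Q}\sts{\B{Q},\cdot}$, this yields $\Scal^{\dprime}\sts{Q}\sts{g,\eps}=\bo{\lambda^2\norm{\eps}_{H^1}+\abs{\lambda}\norm{\eps}_{H^1}^2}+\so{\abs{\lambda}^3}$. The remaining mixed cubic terms $\Scal^{\tprime}\sts{Q}\sts{g,g,\eps}$ and $\Scal^{\tprime}\sts{Q}\sts{g,\eps,\eps}$ are bounded directly by $\bo{\lambda^2\norm{\eps}_{H^1}}$ and $\bo{\abs{\lambda}\norm{\eps}_{H^1}^2}$ respectively. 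An elementary optimization (or Young's inequality) then shows that each of $\lambda^2\norm{\eps}_{H^1}$, $\abs{\lambda}\norm{\eps}_{H^1}^2$ and $\norm{\eps}_{H^1}^3$ is $\so{\abs{\lambda}^3+\norm{\eps}_{H^1}^2}$ as $\abs{\lambda},\norm{\eps}_{H^1}\to 0$: for example, the ratio $\lambda^2\norm{\eps}_{H^1}/(\abs{\lambda}^3+\norm{\eps}_{H^1}^2)$ attains its maximum in $\norm{\eps}_{H^1}$ at $\norm{\eps}_{H^1}=\abs{\lambda}^{3/2}$ with value $\abs{\lambda}^{1/2}/2\to 0$.

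The main obstacle is the control of $\Scal^{\dprime}\sts{Q}\sts{g,\eps}$: a naive bound $\bo{\abs{\lambda}\norm{\eps}_{H^1}}$ would be only $\so{\lambda^2+\norm{\eps}_{H^1}^2}$, too weak for \eqref{tyl2}. The resolution is the combined use of the identity \eqref{expddp}, which trades the degenerate pairing with $\phi$ for the inner product with $\B{Q}$, and the orthogonality \eqref{EpBQ}, which enforces a quadratic-in-$\lambda$ bound on $\action{\B{Q}}{\eps}$ as a consequence of $\Jcal\sts{u}=\Jcal\sts{Q}$ via Lemma \ref{lem:eBQ}. This is precisely where the nonlinear restriction $\Jcal\sts{u}=\Jcal\sts{Q}$ built into the decomposition enters decisively; assembling the pieces then produces \eqref{tyl2}.
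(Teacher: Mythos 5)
Your proposal is correct and follows essentially the same route as the paper's proof: Taylor expansion of $\Scal$ at $Q$ justified by Lemma \ref{lem:S3d}, evaluation of the pure $\sts{\lambda\phi+\rho\sts{\lambda}\B{Q}}$-part by the computation of Lemma \ref{lem:tyl1}, reduction of the dangerous linear-in-$\eps$ cross term to $-\lambda\action{\eps}{\B{Q}}$ via \eqref{expddp} and then control by the hypothesis \eqref{EpBQ}, and absorption of the remaining mixed terms $\bo{\lambda^{2}\norm{\eps}_{H^1}+\abs{\lambda}\norm{\eps}_{H^1}^{2}+\norm{\eps}_{H^1}^{3}}$ into $\so{\abs{\lambda}^{3}+\norm{\eps}_{H^1}^{2}}$, exactly as in the paper. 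One remark: both your derivation and the paper's own actually yield the coefficient $\tfrac12\Scal^{\dprime}\sts{Q}\sts{\eps,\eps}$ rather than the factor-free term displayed in \eqref{tyl2}; this is a typo in the statement of the lemma (consistent with the constant $\tfrac{\kappa}{4}$ appearing when the lemma is combined with Corollary \ref{cor:coer} in \eqref{eST2}), not a gap in your argument.
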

\begin{proof}
By the Taylor series expression of $\Scal$ at $Q$,  we have from Lemma \ref{lem:tyl1} that
\begin{align*}
        \Scal\sts{Q + {\lambda}\phi
        +
        \rho({ {\lambda}})\B{Q}
        +
        \eps
    }
    =&\;
    \Scal\sts{Q} +
    \frac{1}{2}\Scal^{\dprime}\sts{Q}
    \sts{
        {\lambda}\phi
        +
        \rho({ {\lambda}})\B{Q}
        +
        \eps,~
        {\lambda}\phi
        +
        \rho({ {\lambda}})\B{Q}
        +
        \eps
    }
        \\
        &
        +
        \frac{1}{6}
        \Scal^{\tprime}\sts{Q}
        \sts{
             {\lambda}\phi
            +
            \rho({ {\lambda}})\B{Q}
            +
            \eps,~
              {\lambda}\phi
            +
            \rho({ {\lambda}})\B{Q}
            +
            \eps,~
            {\lambda}\phi
            +
            \rho({ {\lambda}})\B{Q}
            +
            \eps
        }
    \\
    &
        +
        \so{ \norm{ {\lambda}\phi
            +
            \rho({ {\lambda}})\B{Q}+\eps}_{H^1}^3 }
    \\
    =&\;
    \Scal\sts{Q} +
    \frac{1}{6}\mathbf{d}_{\bm\xi}^{\tprime}\cdot \lambda^3
    +
    \Scal^{\dprime}\sts{Q}
    \sts{
        {\lambda}\phi
            ,~
        \eps
    }
    +
    \frac{1}{2}
    \Scal^{\dprime}\sts{Q}
    \sts{\eps,~\eps}
    \\
    &
    +
    \bo{\lambda^2\norm{\eps}_{H^1} +\norm{\eps}_{H^1}^3 }
    +
    \so{
        \abs{\lambda}^3
        +
        \norm{\eps}_{H^1}^3
    }
    \\
    =&\;
    \Scal\sts{Q} +
    \frac{1}{6}\mathbf{d}_{\bm\xi}^{\tprime}\cdot \lambda^3
    -
    \lambda\action{\eps}{\B{Q}}
    +
    \frac{1}{2}
    \Scal^{\dprime}\sts{Q}
    \sts{\eps,\eps}
    \\
    &
    +
    \bo{\lambda^2\norm{\eps}_{H^1} +\norm{\eps}_{H^1}^3 }
    +
    \so{
        \abs{\lambda}^3
        +
        \norm{\eps}_{H^1}^2
    },\numberthis\label{Sle}
\end{align*}
where we used the facts that $\Scal'\sts{Q}=0$ in the first equality and that
\begin{align*}
 \abs{\Scal^{\dprime}\sts{Q}
\sts{
        \rho({ {\lambda}})\B{Q},~\eps
    }} + \abs{\Scal^{\tprime}\sts{Q}
        \sts{
             {\lambda}\phi
            +
            \rho({ {\lambda}})\B{Q}
            ,~
              {\lambda}\phi
            +
            \rho({ {\lambda}})\B{Q}
            ,~
                        \eps
        }} = \bo{\lambda^2\norm{\eps}_{H^1}}, \\
             \abs{ \Scal^{\tprime}\sts{Q}
        \sts{
             {\lambda}\phi
            +
            \rho({ {\lambda}})\B{Q}
            ,~
               \eps
            ,~
                        \eps
        }}= \bo{\abs{\lambda}\norm{\eps}^2_{H^1}}, \quad
             \abs{\Scal^{\tprime}\sts{Q}
        \sts{
             \eps
            ,~
               \eps
            ,~
                        \eps}
        }= \bo{\norm{\eps}^3_{H^1}}
\end{align*}
in the second equality.

By \eqref{EpBQ} and Cauchy-Schwarz's inequality, we have
\begin{equation}\label{tp3}
  \abs{\lambda \action{\eps}{\B{Q}} }
  =
  \bo{
    \lambda^2~\norm{\eps}_{H^1}
    +
    \abs{\lambda}~\norm{\eps}_{H^1}^2
  }
    +
    \so{\abs{\lambda}^3}
  =
  \so{\abs{\lambda}^3+\norm{\eps}_{H^1}^2},
\end{equation}
and
\begin{equation}
\label{tp4}
  \bo{\lambda^2\norm{\eps}_{H^1} +\norm{\eps}_{H^1}^3 }
  =
  \so{\abs{\lambda}^3+\norm{\eps}_{H^1}^2}.
\end{equation}
Inserting \eqref{tp3} and \eqref{tp4} into \eqref{Sle}, we obtain the result and complete the proof.
\end{proof}

\subsection{Properties of the linearized operator $\mathcal{L}$}
As shown in Lemma \ref{lem:tyl2}, we are left to show that the quadratic term $\Scal''\sts{\eps, ~\eps}$ has some coercivity (or convex) property under the condition that the radiation term $\eps$ has some geometric orthogonal structures. It is the task in this subsection and related to the spectral
properties of the linearized operators $\mathcal{L}=\Scal''$.  The spectral
properties of the linearized operator around the solitary waves play a crucial role in long time dynamics of the solutions near solitary waves in \cite{CoPe2003, GSS1987, GSS1990, Maeda2012, MartelM:Instab:gKdV, MMR:dya:gKdV, MartelMT:Stab:gKdV, MartelMT:Stab:NLS, NakSchlag:book, Ohta:Ins:JFA, Weinstein1985, Wein:stab:CPAM} and references therein.


Now by the variational characterization of $Q$ and standard argument in \cite{GSS1987, MiaoTX:Stab, Weinstein1985} we can exhibit the following coercive property of the linearized operator $ {\Scal}^{\dprime}\sts{Q}$ in the energy space.
\begin{lemm}
\label{lem:coer}
There exists a constant $\kappa>0$ such that if $\eps\in H^1\sts{\R}$ satisfies
    \begin{equation}
    \label{vt}
      \action{\eps}{\i{Q}}=0, \quad \action{\eps}{{Q}_{x}}=0, \text{~~and~~}
      \action{\eps}{\phi}=0,
    \end{equation}
    then we have
    \begin{equation}
    \label{tp2}
      {\Scal}^{\dprime}\sts{Q}\sts{\eps,~\eps}\geq \kappa\norm{\eps}_{H^1}^2
      -
      \frac{1}{\kappa}\action{\eps}{\B{Q}}^2.
    \end{equation}
\end{lemm}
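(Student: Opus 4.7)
I would prove the lemma in two steps: first establish the sharper coercivity under one additional orthogonality condition $\action{\eps}{\B{Q}}=0$, then recover the general case by splitting off the $\B{Q}$-component.

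\textbf{Step 1 (coercivity on the codimension-four subspace).} The operator $\mathcal{L}=\Scal^{\dprime}_{\omega,c}\sts{Q}$ is self-adjoint with $\mathrm{Null}\sts{\mathcal{L}}=\mathrm{span}\{\i Q,\partial_x Q\}$, essential spectrum bounded away from zero, and (by Proposition 3.6 in \cite{LSS2013}) a single simple negative eigenvalue. Conditions $\action{\eps}{\i Q}=\action{\eps}{Q_x}=0$ remove the symmetry kernel; the additional condition $\action{\eps}{\B{Q}}=0$ places $\eps$ tangent to the level set $\{\Jcal_{\bm\xi}=\Jcal_{\bm\xi}\sts{Q}\}$, and the condition $\action{\eps}{\phi}=0$, combined with the algebraic identities $\mathcal{L}\phi=-\B{Q}$ from \eqref{expddp} and $\action{\phi}{\B{Q}}=0$ from \eqref{dgf}, eliminates the direction along which the quadratic form would otherwise degenerate. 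A standard contradiction/compactness argument, exactly as in \cite{GSS1987,Weinstein1985,MiaoTX:Stab}, upgrades the nonnegativity (which follows from the variational characterization of $Q$) to strict coercivity on this codimension-four subspace: $\action{\mathcal{L}\eps}{\eps}\geq\kappa_1\norm{\eps}_{H^1}^2$.

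\textbf{Step 2 (general case).} Let $\widehat{\B{Q}}$ be the $L^2$-projection of $\B{Q}$ onto $\{\i Q,Q_x,\phi\}^\perp$; since $\B{Q}=\xi_1 Q+\xi_2\i\partial_x Q$ is linearly independent of $\{\i Q,Q_x,\phi\}$, we have $\widehat{\B{Q}}\neq 0$. For $\eps$ satisfying \eqref{vt}, set $s=\action{\eps}{\B{Q}}/\norm{\widehat{\B{Q}}}_{L^2}^2$ and write $\eps=\eps_0+s\widehat{\B{Q}}$, so that $\eps_0$ satisfies all four orthogonalities and Step 1 applies. Expanding the bilinear form and applying Young's inequality to the cross term,
\begin{align*}
\action{\mathcal{L}\eps}{\eps}
= \action{\mathcal{L}\eps_0}{\eps_0}
+ 2s\,\Scal^{\dprime}_{\omega,c}\sts{Q}\sts{\eps_0,\widehat{\B{Q}}}
+ s^2\,\Scal^{\dprime}_{\omega,c}\sts{Q}\sts{\widehat{\B{Q}},\widehat{\B{Q}}}
\geq \tfrac{\kappa_1}{2}\norm{\eps_0}_{H^1}^2 - C s^2,
\end{align*}
where $C$ absorbs the $H^1$-continuity of $\Scal^{\dprime}_{\omega,c}\sts{Q}$ evaluated against $\widehat{\B{Q}}$ together with the Young constants. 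Combining with $\norm{\eps}_{H^1}^2\leq 2\norm{\eps_0}_{H^1}^2+C'\norm{\widehat{\B{Q}}}_{H^1}^2 s^2$ and $s^2\leq C''\action{\eps}{\B{Q}}^2$ yields \eqref{tp2} for a suitable $\kappa>0$.

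\textbf{Main obstacle.} The delicate point is Step 1, specifically verifying that orthogonality to $\phi$ (in place of the usual Weinstein orthogonality against the negative eigenvector of $\mathcal{L}$) together with $\action{\eps}{\B{Q}}=0$ removes all non-positive directions of the quadratic form. Heuristically, $\phi$ must have a nontrivial component in the negative eigenspace of $\mathcal{L}$; this reduces, via the identity $\Scal^{\tprime}\sts{Q}\sts{\phi,\phi,\phi}+3\action{\B{\phi}}{\phi}=\mathbf{d}_{\bm\xi}^{\tprime}$ in \eqref{expdtp}, to the non-degeneracy hypothesis $\mathbf{d}_{\bm\xi}^{\tprime}\neq 0$ of \eqref{ndeg}. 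Once this structural fact is invoked, the remainder of Step 1 is routine spectral and variational analysis.
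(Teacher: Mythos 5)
Your proof is correct and follows essentially the same route as the paper, which simply delegates this lemma to the cited references \cite{Fukaya2017, MiaoTX:Stab, Weinstein1985}: strict coercivity on the codimension-four subspace (using that $\mathcal{L}$ has a single negative eigenvalue, $\mathcal{L}\phi=-\B{Q}$ and $\action{\B{Q}}{\phi}=0$ to absorb the one negative direction, with the degenerate direction being exactly $\text{span}\ltl{\phi}$), followed by splitting off the $\B{Q}$-component --- which is in fact already orthogonal to $\ltl{\i Q,\ Q_x,\ \phi}$, so $\widehat{\B{Q}}=\B{Q}$ --- to produce the $-\frac{1}{\kappa}\action{\eps}{\B{Q}}^2$ correction. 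One minor remark: the fact that $\phi$ has a nonzero component along the negative eigenvector already follows from $\action{\mathcal{L}\phi}{\phi}=0$ together with $\phi\perp\text{Null}\sts{\mathcal{L}}$ and $\phi\neq 0$, so the appeal to $\mathbf{d}_{\bm\xi}^{\tprime}\neq 0$ in your closing paragraph is not actually needed for this lemma.
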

\begin{proof}
See proof in \cite{Fukaya2017, MiaoTX:Stab, Weinstein1985}.
\end{proof}

Combining Lemma \ref{lem:coer} with Lemma \ref{lem:eBQ}, we have
\begin{coro}
\label{cor:coer}There exist $0<\bar{\delta}_5\ll 1$ and $0<\bar{\lambda}_5\ll 1$ such that if $\eps\in H^1\sts{\R}$ with $\norm{\eps}_{H^1}\leq\bar{\delta}_5 $, and $\lambda\in\R$ with
$\abs{ {\lambda}}\leq\bar{\lambda}_5 $ satisfy
\begin{equation*}
  \action{\eps}{\i{Q}}=0, \quad \action{\eps}{{Q}_{x}}=0, \text{~~and~~}
  \action{\eps}{\phi}=0,
\end{equation*}and
\begin{equation*}
\Jcal\sts{Q + {\lambda}\phi
        +
        \rho({ {\lambda}})\B{Q}+\eps
    }=\Jcal\sts{Q},
\end{equation*}
where $\rho\sts{\lambda}$ is determined  by \eqref{rho}, then we have
\begin{equation*}
  {\Scal}^{\dprime}\sts{Q}\sts{\eps,\eps}\geq \frac{\kappa}{2}\norm{\eps}_{H^1}^2
  +
  \so{ {\lambda}^4}.
\end{equation*}
\end{coro}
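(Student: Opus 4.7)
The proof should be a direct combination of the two immediately preceding results, and the only real work is bookkeeping. Under the stated hypotheses, the orthogonality conditions are exactly those required by Lemma \ref{lem:coer}, and the level-set constraint $\Jcal(Q+\lambda\phi+\rho(\lambda)\Bcal Q+\eps)=\Jcal(Q)$ together with $|\lambda|\le \bar\lambda_5$ and $\|\eps\|_{H^1}\le\bar\delta_5$ puts us in the scope of Lemma \ref{lem:eBQ} (after choosing $\bar\lambda_5\le\bar\lambda_2$ and $\bar\delta_5\le\bar\delta_2$). Thus both
\begin{equation*}
\Scal''(Q)(\eps,\eps)\ \geq\ \kappa\,\|\eps\|_{H^1}^{2}\ -\ \tfrac{1}{\kappa}\,\langle\eps,\Bcal Q\rangle^{2}
\end{equation*}
and
\begin{equation*}
\langle\eps,\Bcal Q\rangle\ =\ \bo{|\lambda|\,\|\eps\|_{H^1}+\|\eps\|_{H^1}^{2}}\ +\ \so{\lambda^{2}}
\end{equation*}
are available at once, and the conclusion of the corollary must be read off from these.

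The plan is to square the bound from Lemma \ref{lem:eBQ} and control the cross term by AM--GM. Writing $\langle\eps,\Bcal Q\rangle=A+B$ with $A=\bo{|\lambda|\|\eps\|_{H^1}+\|\eps\|_{H^1}^{2}}$ and $B=\so{\lambda^{2}}$, I would use $(A+B)^{2}\le 2A^{2}+2B^{2}$ to get
\begin{equation*}
\langle\eps,\Bcal Q\rangle^{2}\ =\ \bo{\lambda^{2}\|\eps\|_{H^1}^{2}+\|\eps\|_{H^1}^{4}}\ +\ \so{\lambda^{4}},
\end{equation*}
where the cross term $2AB$ is absorbed by $2|AB|\le A^{2}+B^{2}$ (or, if one prefers a single AM--GM step, by bounding $|\lambda|^{3}\|\eps\|_{H^1}\le \tfrac12\lambda^{4}+\tfrac12\lambda^{2}\|\eps\|_{H^1}^{2}$). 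Substituting into the coercivity estimate yields
\begin{equation*}
\Scal''(Q)(\eps,\eps)\ \geq\ \kappa\,\|\eps\|_{H^1}^{2}\ -\ \tfrac{C}{\kappa}\bigl(\lambda^{2}\|\eps\|_{H^1}^{2}+\|\eps\|_{H^1}^{4}\bigr)\ +\ \so{\lambda^{4}}.
\end{equation*}

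Finally, I would shrink $\bar\lambda_5$ and $\bar\delta_5$ so that $\tfrac{C}{\kappa}(\lambda^{2}+\|\eps\|_{H^1}^{2})\le \tfrac{\kappa}{2}$, which immediately absorbs the $\bo{\cdot}$ term into $\tfrac{\kappa}{2}\|\eps\|_{H^1}^{2}$ and produces the stated inequality
\begin{equation*}
\Scal''(Q)(\eps,\eps)\ \geq\ \tfrac{\kappa}{2}\|\eps\|_{H^1}^{2}+\so{\lambda^{4}}.
\end{equation*}
There is no conceptual obstacle here; the only point requiring a little care is that the $\so{\lambda^{2}}$ error in Lemma \ref{lem:eBQ}, once squared and cross-multiplied, does not generate any term worse than $\so{\lambda^{4}}$ or an absorbable $\bo{\cdot}\|\eps\|_{H^1}^{2}$ term, which is exactly what the AM--GM split above ensures.
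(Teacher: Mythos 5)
Your proposal is correct and follows essentially the same route as the paper: square the estimate of Lemma \ref{lem:eBQ} to get $\action{\eps}{\B{Q}}^2=\bo{\lambda^2\norm{\eps}_{H^1}^2+\norm{\eps}_{H^1}^4}+\so{\lambda^4}$, insert this into the coercivity bound of Lemma \ref{lem:coer}, and absorb the resulting $\so{\norm{\eps}_{H^1}^2}$ term by shrinking $\bar\lambda_5$ and $\bar\delta_5$. The only cosmetic difference is that you make the AM--GM cross-term absorption explicit where the paper states it implicitly.
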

\begin{proof}
First, by \eqref{eeBQ}, we have
\begin{align*}
\action{\eps}{\B{Q}}^2
=
&
  \bsts{
    \bo{
        \abs{ {\lambda}}\norm{\eps}_{H^1}
        +
        \norm{\eps}_{H^1}^2
    }
    +
    \so{ {\lambda}^2}
  }^2
\\
=
&
\bo{
    \lambda^2\norm{\eps}_{H^1}^2
    +
    \norm{\eps}_{H^1}^4
}
+
\so{\lambda^4}
\\
=
&
\so{
    \norm{\eps}_{H^1}^2
}
+
\so{\lambda^4},\numberthis \label{tp1}
\end{align*}
where $\abs{\lambda}$ and $\norm{\eps}_{H^1}$ should be taken sufficiently small.

Next, by inserting \eqref{tp1} into \eqref{tp2} and taking  $\norm{\eps}_{H^1}$ sufficiently small, we can obtain
  \begin{align*}
    {\Scal}^{\dprime}\sts{Q}\sts{\eps,\eps}
    \geq
    &
     \kappa\norm{\eps}_{H^1}^2
      -
      \frac{1}{\kappa}\action{\eps}{\B{Q}}^2
    \\
    \geq
    &
    \kappa\norm{\eps}_{H^1}^2
    +
    \so{\norm{\eps}_{H^1}^2}
    +
    \so{\lambda^4}
    \\
    \geq
    &
    \frac{\kappa}{2}\norm{\eps}_{H^1}^2
    +
    \so{ {\lambda}^4}.
  \end{align*}
This concludes the proof.
\end{proof}

\section{The equation on $\eps$ variable and the dynamical estimates of the parameters}\label{sect:eps dyn}
In this section, we derive the equation satisfied by the radiation term
\begin{equation}
\label{eps}
  \eps\sts{t,x}=u\sts{t,x+y\sts{t}}\e^{\i\gamma\sts{t}}
  -
  \bsts{
    Q\sts{x}
    +
    {\lambda\sts{t}}\phi\sts{x}
    +
    \rho({ {\lambda}\sts{t}})\B{Q}\sts{x}
    },
\end{equation}
where $u$ is a solution of \eqref{gdnls} in the energy space $H^1\sts{\R}$, $\phi$ and $\rho$ are defined by \eqref{vphi} and \eqref{rho} respectively, $\lambda$, $y$, $\gamma$ are the $\Ccal^1$ functions with respect to $t$ which  will be determined later. For convenience, we denote
\begin{equation}
\label{F}
  f\sts{u}= \i\abs{u}^{2\sigma}{u}_{x},
\end{equation}
and
\begin{align*}
  \Rcal_1\sts{Q,\eta}
=
  \i\abs{Q }^{2\sigma}{\eta }_{x}
  +
  \i\sigma\abs{Q }^{2\sigma-2}\bar{Q}{Q}_x{\eta }
  +
  \i\sigma\abs{Q }^{2\sigma-2}{Q}{Q}_x \bar{\eta }.
  \numberthis\label{R1}
\end{align*}

Firstly,  we have the following result.
\begin{lemm} \label{lem:eps}
Let $u(t)\in \mathcal{C}\sts{[0, T),~ H^1\sts{\R} }$ be the solution of \eqref{gdnls} for some $T>0$, and $\eps(t,x)$ be defined  by \eqref{eps}, then we have
\begin{align*}
 \i{\eps}_t ~-~  &
  \Lcal
  \bsts{ {\lambda}\phi
        +
        \rho({ {\lambda}})\B{Q}
        +
        \eps
  } \\
  = &\;
  -\i \lambda_t
  \bsts{\phi
        +
        \dot{\rho}({\lambda})\B{Q}
  }
   +
    \i \sts{y_t-c}~\bsts{Q + {\lambda}\phi
        +
        \rho({ {\lambda}})\B{Q}
        +
        \eps}_{x}
        \\
        & \;
  -\sts{\gamma_t+\omega}
  \bsts{Q + {\lambda}\phi
        +
        \rho({ {\lambda}})\B{Q}
        +
        \eps}
  \\ &\;
  -
  \Rcal_2
  \bsts{Q,~{\lambda}\phi
        +
        \rho({ {\lambda}})\B{Q}
        +
        \eps
  }
  -
  \tilde{\Rcal}
  \bsts{Q,~{\lambda}\phi
        +
        \rho({ {\lambda}})\B{Q}
        +
        \eps
  }, \numberthis \label{epst}
\end{align*}
where the linear term $\Lcal {\eta} $, the quadratic term $ \Rcal_2\sts{Q,~\eta}$ and the higher order term $ \tilde{\Rcal}\sts{Q,~\eta}$ are defined by
\begin{align}
  \Lcal {\eta} = &
  -\eta_{xx}+\omega \eta + c\i\eta_x
  -
   \i\abs{Q}^{2\sigma}\eta_x
     -
   \i\sigma\abs{Q}^{2\sigma-2}\bar{Q}Q_x \eta
   -
   \i\sigma\abs{Q}^{2\sigma-2}{Q}{Q}_x \bar{\eta},
\label{L}
\\
  \Rcal_2& \sts{Q,~\eta}
\;=
  \sigma\i\abs{Q}^{2\sigma-2}\bar{Q}{\eta}{\eta}_x
  +
  \sigma\i\abs{Q}^{2\sigma-2} {Q}\bar{\eta}{\eta}_x  +
  \sigma^2 \i\abs{Q}^{2\sigma-2} {Q}_x{\eta}\bar{\eta}   \nonumber
  \\
  & \;
  \qquad \qquad +
  \frac{\sigma\sts{\sigma-1}}{2}\i\abs{Q}^{2\sigma-4}{\bar{Q}}^2{Q}_x {\eta}{\eta}
  +
  \frac{\sigma\sts{\sigma-1}}{2}\i\abs{Q}^{2\sigma-4}{{Q}}^2{Q}_x \bar{\eta}\bar{\eta},
\label{R2}
\\
 & \tilde{\Rcal}\sts{Q,~\eta}
=
  f\sts{ Q+\eta }
  -
  f\sts{ Q } - \Rcal_1\sts{Q,\eta}
  -
  \Rcal_2\sts{Q,\eta}.
\label{Rt}
\end{align}
\end{lemm}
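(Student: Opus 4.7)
The plan is to derive \eqref{epst} by computing $\i\partial_t v$ in two ways, where $v(t,x) := u(t, x+y(t))\e^{\i\gamma(t)}$ denotes $u$ transported into the moving, gauge-adjusted frame. Writing $\eta := \lambda\phi + \rho(\lambda)\B{Q} + \eps$, the definition \eqref{eps} reads $v = Q+\eta$, and since $Q,\phi,\B{Q}$ are time-independent, one immediately obtains
$$\i\partial_t v \;=\; \i\lambda_t\phi + \i\lambda_t\dot\rho(\lambda)\B{Q} + \i\eps_t.$$

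For the second expression, the chain rule applied to the moving-frame definition of $v$ together with \eqref{gdnls} (inserted at the translated point $x+y(t)$) gives
$$\i\partial_t v \;=\; -v_{xx} - \i|v|^{2\sigma}v_x + \i y_t v_x - \gamma_t v,$$
where I use that $-\partial_x^2$ and the gauge-derivative nonlinearity $|u|^{2\sigma}u_x$ both commute with the spatial translation $x\mapsto x+y$ and with multiplication by the constant phase $\e^{\i\gamma}$. Subtracting the first expression from this one isolates $\i\eps_t$.

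Next I would split the resulting identity into the piece already present at $v=Q$ and a remainder in $\eta$. From \eqref{Qeq}, $-Q_{xx} = -\omega Q - c\i Q_x + f(Q)$ with $f(u) := \i|u|^{2\sigma}u_x$, so writing $-v_{xx} = -Q_{xx} - \eta_{xx}$ and applying Taylor's formula for $f$ at $Q$ (treated formally as a function of $(u,\bar u, u_x)$),
$$f(Q+\eta) - f(Q) \;=\; \Rcal_1(Q,\eta) + \Rcal_2(Q,\eta) + \tilde{\Rcal}(Q,\eta),$$
where $\tilde{\Rcal}$ is the remainder \emph{defined} by \eqref{Rt}. A direct computation of the first and second $(u,\bar u,u_x)$-derivatives of $\i|u|^{2\sigma}u_x$ evaluated at $Q$ and applied to $\eta$ produces exactly the expressions \eqref{R1} and \eqref{R2}. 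Recognizing from \eqref{L} and \eqref{R1} that $\Lcal\eta = -\eta_{xx} + \omega\eta + c\i\eta_x - \Rcal_1(Q,\eta)$, and grouping the $-\omega Q,-c\i Q_x$ pieces with the $-\omega\eta,-c\i\eta_x$ pieces into $-(\gamma_t+\omega)v$ and $\i(y_t-c)v_x$ (absorbing the $-\gamma_t v$ and $\i y_t v_x$ contributions) yields precisely \eqref{epst}.

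The computation is essentially algebraic bookkeeping. The only point requiring attention is the term-by-term identification of the first and second Fréchet derivatives of the gauge-derivative nonlinearity: the antiholomorphic derivatives $\partial_{\bar u}|u|^{2\sigma-2k}$ generate the $Q/\bar Q$ weights and the $|Q|^{2\sigma-4}$ weights that appear in $\Rcal_2$, and one must check that $\partial_u\partial_{\bar u}f = \i\sigma^2|u|^{2\sigma-2}u_x$ so that the coefficient of $\eta\bar\eta$ in \eqref{R2} is $\sigma^2$ rather than $\sigma$ or $\sigma(\sigma-1)$. Once these coefficients are verified, the remainder of the argument is a clean rearrangement.
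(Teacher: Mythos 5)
Your proposal is correct and follows essentially the same route as the paper: pass to the moving, gauge-adjusted frame $v=u(t,\cdot+y(t))\e^{\i\gamma(t)}$, use \eqref{gdnls} and \eqref{Qeq}, note that $\Lcal\eta=-\eta_{xx}+\omega\eta+c\i\eta_x-\Rcal_1\sts{Q,\eta}$, and observe that the splitting $f\sts{Q+\eta}-f\sts{Q}=\Rcal_1+\Rcal_2+\tilde{\Rcal}$ is exact by the definition \eqref{Rt} of $\tilde{\Rcal}$. The only minor remark is that the term-by-term identification of $\Rcal_1,\Rcal_2$ with the Fr\'echet derivatives of $f$ at $Q$ is not actually needed for this lemma (since $\tilde{\Rcal}$ is defined as the remainder); it only becomes relevant later when $\Rcal_2$ is paired with $\Scal^{\tprime}\sts{Q}$ and $\tilde{\Rcal}$ is estimated.
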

\begin{proof}
First, let
\begin{equation}
\label{uv}
  v\sts{t,x} = u\sts{t,x+y\sts{t}}\e^{\i\gamma\sts{t}},
\end{equation}
then we have
  \begin{multline*}
    {v}_{t}\sts{t,x}
    =
    {u}_{t}\sts{t,x+y\sts{t}}\e^{\i\gamma\sts{t}}
    +
    y_t~{u}_{x}\sts{t,x+y\sts{t}}\e^{\i\gamma\sts{t}}
    +
    \i\gamma_t ~ u\sts{t,x+y\sts{t}}\e^{\i\gamma\sts{t}},
    \numberthis\label{ut}
  \end{multline*}
  and
    \begin{equation}
    \label{ux}
    {u}_{x}\sts{t,x+y\sts{t}}\e^{\i\gamma\sts{t}}
    =
    {v}_{x}\sts{t,x},
\quad
  {u}_{xx}\sts{t,x+y\sts{t}}\e^{\i\gamma\sts{t}}
  =
  {v}_{xx}\sts{t,x}.
  \end{equation}
By inserting \eqref{uv} and \eqref{ux} into \eqref{ut}, we have
  \begin{align*}
    {u}_{t}\sts{t,x+y\sts{t}}\e^{\i\gamma\sts{t}}
    =
    &
    {v}_{t}\sts{t,x}
    -
    y_t~{v}_{x}\sts{t,x}
    -
    \i\gamma_t ~ {v}\sts{t,x}.
  \end{align*}
Hence, it follows from \eqref{gdnls} that
\begin{align}
\label{veq}
  \i{v}_{t}
    -
    \i y_t~{v}_{x}
    +
    \gamma_t ~ {v}
    +
    {v}_{xx}
    +
    f\sts{v}=0.
\end{align}
Now, by $v\sts{t,x}=Q\sts{x}+\lambda\sts{t} \varphi\sts{x} + \rho\sts{\lambda\sts{t}}\mathcal{B}Q\sts{x} + \eps\sts{t,x}$ and \eqref{veq}, we have
\begin{align*}
  0
  =&
  \i{\eps}_{t}
  +
  \sts{Q + {\lambda}\phi
        +
        \rho({ {\lambda}})\B{Q}
        +
        \eps}_{xx}
  +
  f
  \sts{{Q + {\lambda}\phi
        +
        \rho({ {\lambda}})\B{Q}
        +
        \eps}
  }
  \\
  &
  +
  \i \lambda_t
  \sts{\phi
        +
        \dot{\rho}({\lambda})\B{Q}
  }
  -
    \i y_t~\sts{Q + {\lambda}\phi
        +
        \rho({ {\lambda}})\B{Q}
        +
        \eps}_{x}
  +\gamma_t
  \sts{Q + {\lambda}\phi
        +
        \rho({ {\lambda}})\B{Q}
        +
        \eps}
\\
=
&
  \i{\eps}_{t}
  +
  \sts{{\lambda}\phi
        +
        \rho({ {\lambda}})\B{Q}
        +
        \eps}_{xx}
  -
  \omega
  \sts{ {\lambda}\phi
        +
        \rho({ {\lambda}})\B{Q}
        +
        \eps}
  -
  \i c~\sts{{\lambda}\phi
        +
        \rho({ {\lambda}})\B{Q}
        +
        \eps}_{x}
  \\
  &
  +
  \i \lambda_t
  \sts{\phi
        +
        \dot{\rho}({\lambda})\B{Q}
  }
  -
    \i \sts{y_t-c}~\sts{Q + {\lambda}\phi
        +
        \rho({ {\lambda}})\B{Q}
        +
        \eps}_{x}
  \\
  &
  +\sts{\gamma_t+\omega}
  \sts{Q + {\lambda}\phi
        +
        \rho({ {\lambda}})\B{Q}
        +
        \eps}
  +
  f
  \sts{{Q + {\lambda}\phi
        +
        \rho({ {\lambda}})\B{Q}
        +
        \eps}
  }-f\sts{Q}
  \\
  &
  +
  Q_{xx}-\omega~Q-\i c~Q_{x}+f\sts{Q}
\\
=
&
  \i{\eps}_{t}
    +
  \sts{ {\lambda}\phi
        +
        \rho({ {\lambda}})\B{Q}
        +
        \eps}_{xx}
  -
  \omega
  \sts{ {\lambda}\phi
        +
        \rho({ {\lambda}})\B{Q}
        +
        \eps}
  -
  \i c~\sts{ {\lambda}\phi
        +
        \rho({ {\lambda}})\B{Q}
        +
        \eps}_{x}
  \\
  &
  +
    \Rcal_1\sts{Q,{\lambda}\phi
        +
        \rho({ {\lambda}})\B{Q}
        +
        \eps}
    +
  \i \lambda_t
  \sts{\phi
        +
        \dot{\rho}({\lambda})\B{Q}
  }
  \\
  &
  -
    \i \sts{y_t-c}~\sts{Q + {\lambda}\phi
        +
        \rho({ {\lambda}})\B{Q}
        +
        \eps}_{x}
  +\sts{\gamma_t+\omega}
  \sts{Q + {\lambda}\phi
        +
        \rho({ {\lambda}})\B{Q}
        +
        \eps}
  \\
  &
  +
  f
  \sts{{Q + {\lambda}\phi
        +
        \rho({ {\lambda}})\B{Q}
        +
        \eps}
  }
  -
  f\sts{Q}
  -
  \Rcal_1\sts{Q,{\lambda}\phi
        +
        \rho({ {\lambda}})\B{Q}
        +
        \eps}
\\
=
&
  \i{\eps}_{t}
    -
  \Lcal
  \sts{ {\lambda}\phi
        +
        \rho({ {\lambda}})\B{Q}
        +
        \eps
  } \\ &
  +
  \i \lambda_t
  \sts{\phi
        +
        \dot{\rho}({\lambda})\B{Q}
  }
     -
    \i \sts{y_t-c}\sts{Q + {\lambda}\phi
        +
        \rho({ {\lambda}})\B{Q}
        +
        \eps}_{x}
  +\sts{\gamma_t+\omega}
  \sts{Q + {\lambda}\phi
        +
        \rho({ {\lambda}})\B{Q}
        +
        \eps}
  \\
  &
  +
  \Rcal_2
  \sts{Q,{\lambda}\phi
        +
        \rho({ {\lambda}})\B{Q}
        +
        \eps
  }
  +
  \tilde{\Rcal}
  \sts{Q,{\lambda}\phi
        +
        \rho({ {\lambda}})\B{Q}
        +
        \eps
  },
\end{align*}
where we used the fact that $Q$ satisfies the equation \eqref{Qeq} in the third equality.
This concludes the proof.
\end{proof}


Secondly, by the  reserved geometric structures of the radiation term $\eps$, we can obtain the dynamical estimates of the modulation parameters $\lambda\sts{t},~y\sts{t}$ and $\gamma\sts{t}$.

\begin{lemm}
\label{lem:lyot}
 Suppose $T>0$. There exist $0<\bar{\delta}_6\ll 1$ and $0<\bar{\lambda}_6\ll 1$ such that if for all $t\in [0,T)$, $\eps\sts{t,~x} $ and $\lambda\sts{t}$ satisfy the equation \eqref{epst} and
\begin{equation}
\label{tp5}
  \action{\eps\sts{t}}{\i{Q}}=0, \quad \action{\eps\sts{t}}{{Q}_{x}}=0
  \text{~~and~~}
  \action{\eps\sts{t}}{\phi}=0,
\end{equation}
and
\begin{equation}
\label{tp6}
 \norm{\eps\sts{t}}_{H^1}\leq\bar{\delta}_6
 \text{~~and~~}
 \abs{\lambda\sts{t}}\leq\bar{\lambda}_6,
\end{equation}
then we have
\begin{equation}
\label{lyot}
  \abs{ \lambda_t }
  +
  \abs{ y_t-c }
  +
  \abs{ \gamma_t+\omega }
  \leq
  C\sts{Q}\bsts{ \abs{\lambda}+\norm{\eps\sts{t}}_{H^1} }
\end{equation}
for all $ t\in [0,T)$, where $C\sts{Q}$ is a uniform constant in time $t$ which only depends on $Q$.
\end{lemm}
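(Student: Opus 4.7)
The plan is to derive a $3\times 3$ linear system for the modulation derivatives $\sts{\lambda_t,\,y_t-c,\,\gamma_t+\omega}$ by differentiating the three orthogonality conditions in \eqref{tp5} in $t$, and then to invert the system via the non-degeneracy of an explicit leading-order matrix built from $Q$ and $\phi$. First, differentiating each of $\action{\eps}{\i Q}=0$, $\action{\eps}{Q_x}=0$ and $\action{\eps}{\phi}=0$ in time reduces to $\action{\eps_t}{\chi}=0$ for every $\chi\in\{\i Q,\,Q_x,\,\phi\}$. Into each such identity I would substitute $\eps_t$ solved from \eqref{epst}, collect the coefficients of $\lambda_t$, $y_t-c$ and $\gamma_t+\omega$, and separate the purely $\sts{Q,\phi}$-dependent part from the terms carrying $\lambda$ or $\eps$. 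The result is a matrix equation $\sts{M_0+M_1}\sts{\lambda_t,\,y_t-c,\,\gamma_t+\omega}^{\text{T}}=R$, in which both $M_1$ and $R$ are of size $\bo{\abs{\lambda}+\norm{\eps}_{H^1}}$ under \eqref{tp6}.

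Next I would verify that the leading matrix $M_0$ is invertible. Thanks to \eqref{pvtQ}, the row obtained by pairing with $\phi$ contributes $\norm{\phi}_2^2$ to the $\lambda_t$-column and zero to the other two; the two rows obtained by pairing with $\i Q$ and $Q_x$ contribute zero to the $\lambda_t$-column and, in the $\sts{y_t-c,\,\gamma_t+\omega}$ columns, a $2\times 2$ block whose determinant equals $\action{\i Q}{Q_x}^2-\norm{Q}_2^2\norm{Q_x}_2^2$. The latter is strictly negative by Cauchy--Schwarz, since $\i Q$ and $Q_x$ are $\R$-linearly independent by the explicit form \eqref{Q}--\eqref{Phi}. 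Expanding $\det M_0$ along the $\lambda_t$-column yields a nonzero value proportional to $\norm{\phi}_2^2\sts{\norm{Q}_2^2\norm{Q_x}_2^2-\action{\i Q}{Q_x}^2}$, so $M_0$ is invertible with $\norm{M_0^{-1}}$ depending only on $Q$ and $\phi$.

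Then I would estimate the right-hand side and the perturbation. The linear contribution $\action{-\i\Lcal\sts{\lambda\phi+\rho\sts{\lambda}\B{Q}+\eps}}{\chi}$ is handled by self-adjointness of $\Lcal$, together with $\Lcal\sts{\i Q}=\Lcal\sts{Q_x}=0$ and $\Lcal\phi=-\B{Q}$ from \eqref{expddp}; the resulting pairings involve $\tilde v:=\lambda\phi+\rho\sts{\lambda}\B{Q}+\eps$ against localized smooth functions, hence are of size $\bo{\abs{\lambda}+\norm{\eps}_{H^1}}$. The quadratic remainder $\Rcal_2\sts{Q,\tilde v}$ is pointwise bounded by $\sts{\abs{Q}^{2\sigma-1}+\abs{Q}^{2\sigma-2}\abs{\partial_x Q}}\sts{\abs{\tilde v}^2+\abs{\tilde v}\abs{\tilde v_x}}$; by \eqref{QDQ}, the uniform positivity of $\abs{Q}$, and the exponential decay of $\chi$, integration against $\chi$ yields an $\bo{\lambda^2+\norm{\eps}_{H^1}^2}$ contribution, and $\tilde{\Rcal}$ is of strictly higher order. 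The cross terms $\sts{y_t-c}\action{\tilde v_x-Q_x}{\chi}$ and $\sts{\gamma_t+\omega}\action{\i\tilde v-\i Q}{\chi}$, together with the $\dot\rho\sts{\lambda}\B{Q}$ tail in the $\lambda_t$-row, are absorbed into $M_1$ because each carries a factor $\abs{\lambda}+\norm{\eps}_{H^1}$. Taking $\bar{\delta}_6,\bar{\lambda}_6$ sufficiently small makes $M_0+M_1$ invertible by Neumann series, and \eqref{lyot} follows with a constant $C\sts{Q}$ depending only on $Q$.

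The main technical obstacle is the low regularity of the nonlinearity when $\sigma\in\sts{1,3/2}$, where $\Scal_{\omega,c}$ fails to be globally $\Ccal^3$ on $H^1$; however, the local $\Ccal^3$-smoothing at $Q$ granted by Lemma \ref{lem:S3d}, in combination with the uniform positivity of $\abs{Q}$ from \eqref{QDQ} and the exponential decay of the test vectors $\i Q$, $Q_x$ and $\phi$, makes the integrals defining the entries of $M_1$ and $R$ well-defined and produces the uniform constant $C\sts{Q}$ in \eqref{lyot}.
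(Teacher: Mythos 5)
Your proposal is correct and follows essentially the same route as the paper: pair equation \eqref{epst} with the test functions $\i\phi$, $\i Q_x$, $Q$ (equivalently, differentiate the orthogonality conditions \eqref{tp5} in time), obtain a $3\times 3$ linear system for $\sts{\lambda_t,\,y_t-c,\,\gamma_t+\omega}$ whose leading matrix is non-degenerate thanks to \eqref{pvtQ} and the $\R$-linear independence of $\i Q$ and $Q_x$, and bound the right-hand side by $\bo{\abs{\lambda}+\norm{\eps}_{H^1}}$. The paper bounds the source term more crudely via \eqref{NLE} rather than using $\Lcal\phi=-\B{Q}$ and the kernel relations, but this is only a difference in bookkeeping, not in strategy.
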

\begin{proof}%

First, for any $\psi\in H^1$, it follows from \eqref{Sdp} and \eqref{L} that,
\begin{align*}
  \action{
      \Lcal
      \sts{ {\lambda}\phi
            +
            \rho({ {\lambda}})\B{Q}
            +
            \eps
      }
  }{
    \psi
  }
  =
  \Scal^{\dprime}\sts{Q}
  \sts{
      {\lambda}\phi
            +
            \rho({ {\lambda}})\B{Q}
            +
            \eps,~
      \psi
  }.\numberthis\label{LS2d}
\end{align*}

Next, by \eqref{S3dQ} and \eqref{R2}, we have
\begin{multline*}
    \action{
        \Rcal_2
        \sts{Q,~ {\lambda}\phi
            +
            \rho({ {\lambda}})\B{Q}
            +
            \eps
        }
    }{
        \psi
    } =  -\frac{1}{2}
    \Scal^{\tprime}\sts{Q}
    \sts{
        {\lambda}\phi
            +
            \rho({ {\lambda}})\B{Q}
            +
            \eps
        ,~
        {\lambda}\phi
            +
            \rho({ {\lambda}})\B{Q}
            +
            \eps
        ,~
        \psi
    }.\numberthis\label{R2S3d}
\end{multline*}

At last,
 by \eqref{Rt}, we have
\begin{align*}
  \abs{ \tilde{\Rcal}\sts{Q,~\eta} }
  \leq
  C\sts{Q}\sts{ \abs{Q_x}\abs{\eta}^{2\sigma}
  +
  \abs{Q}^{2\sigma-2}\abs{\eta}^2\abs{\eta_x}
  +
  \abs{\eta}^{2\sigma}\abs{\eta_x}
  }.
  \numberthis\label{peRt}
\end{align*}
where $C\sts{Q}$ is a constant which only depends on
$Q$ and $\partial_x Q$.

Now let
\begin{multline*}
\Fcal\sts{Q,~\lambda,~\eps}
   =
  \Lcal
  \sts{ {\lambda}\phi
        +
        \rho({ {\lambda}})\B{Q}
        +
        \eps
  }
  -
  \Rcal_2
  \sts{Q,~ {\lambda}\phi
        +
        \rho({ {\lambda}})\B{Q}
        +
        \eps
  }
  -
  \tilde{\Rcal}
  \sts{Q,~ {\lambda}\phi
        +
        \rho({ {\lambda}})\B{Q}
        +
        \eps
  },
\end{multline*}
then $\Fcal\sts{Q,\lambda,\eps}$ is a polynomial of at least one degree with respect to $\lambda$ or $\eps$.   By  \eqref{rho}, \eqref{tp6}, we have
\begin{align*}
  \norm{
     {\lambda}\phi
     +
     \rho({ {\lambda}})\B{Q}
     +
     \eps
  }_{H^1\sts{\R}}
  \leq
  C\sts{Q}
  \bsts{ \abs{\lambda}+\norm{\eps}_{H^1\sts{\R}} },
\end{align*}
which implies that
\begin{align}\label{NLE}
\abs{  \left<\Fcal\sts{Q,~\lambda,~\eps}, ~\i\phi\right>}
+ \abs{\left<\Fcal\sts{Q,~\lambda,~\eps}, ~\i Q_x\right>}
+ \abs{\left<\Fcal\sts{Q,~\lambda,~\eps},~Q\right> }
\leq C\sts{Q}   \bsts{ \abs{\lambda}+\norm{\eps}_{H^1\sts{\R}} }.
\end{align}

Multiplying \eqref{epst} with $\i\phi$, $\i Q_x$ and $Q$ respectively, we have by \eqref{tp5},
\begin{multline*}
\lambda_t
  \baction{\i \sts{\phi
        +
        \dot{\rho}({\lambda})\B{Q}
            }
        }{\i\phi}
  -
    \sts{y_t-c}~
    \baction{\i \sts{Q + {\lambda}\phi
        +
        \rho({ {\lambda}})\B{Q}
        +
        \eps}_{x}
    }{\i\phi}
\\
  +
  \sts{\gamma_t+\omega}
  \baction{
    \sts{Q + {\lambda}\phi
        +
        \rho({ {\lambda}})\B{Q}
        +
        \eps}
  }{\i\phi}
=
  \baction{\Fcal\sts{Q,\lambda,\eps}}{\i\phi},
  \numberthis\label{tp7}
\end{multline*}
\begin{multline*}
  \lambda_t
  \baction{\i \sts{\phi
        +
        \dot{\rho}({\lambda})\B{Q}
            }
        }{\i Q_x}
  -
    \sts{y_t-c}~
    \baction{\i \sts{Q + {\lambda}\phi
        +
        \rho({ {\lambda}})\B{Q}
        +
        \eps}_{x}
    }{\i Q_x}
\\
  +\sts{\gamma_t+\omega}
  \baction{
    \sts{Q + {\lambda}\phi
        +
        \rho({ {\lambda}})\B{Q}
        +
        \eps}
  }{\i Q_x}
=
  \baction{\Fcal\sts{Q,\lambda,\eps}}{\i Q_x},
  \numberthis\label{tp8}
\end{multline*}
and
\begin{multline*}
  \lambda_t
  \baction{\i \sts{\phi
        +
        \dot{\rho}({\lambda})\B{Q}
            }
        }{Q}
  -
    \sts{y_t-c}~
    \baction{\i \sts{Q + {\lambda}\phi
        +
        \rho({ {\lambda}})\B{Q}
        +
        \eps}_{x}
    }{Q}
\\
  +\sts{\gamma_t+\omega}
  \baction{
    \sts{Q + {\lambda}\phi
        +
        \rho({ {\lambda}})\B{Q}
        +
        \eps}
  }{Q}
=
  \baction{\Fcal\sts{Q,\lambda,\eps}}{Q}.
  \numberthis\label{tp9}
\end{multline*}
Combining \eqref{tp7}, \eqref{tp8}, \eqref{tp9} with \eqref{NLE},   we can obtain the result.
\end{proof}

\section{Proof of Theorem \ref{thm}}\label{sect:main proof}
\begin{proof} We prove Theorem \ref{thm} by contradiction and divide the proof into several steps.

\begin{enumerate}[label=\emph{{Step \arabic*.}},ref=\emph{{Step \arabic*}}]

\item\label{ps1} \textit{Preparation of the initial data.}
Firstly, we can choose $0<\lambda_0\ll 1$ sufficiently small such that
$
  \Jcal\sts{u_0}=\Jcal\sts{Q},
$
where
\begin{equation*}
  u_0\sts{x} = Q\sts{x} + \lambda_0\phi \sts{x} +\widetilde{\rho}\sts{\lambda_0}\B{Q}\sts{x},
\end{equation*}
and $\widetilde{\rho}(\lambda)$ is determined by \Cref{lem:d1}.  It is easy to check that
\begin{equation*}
\norm{ u_0 -Q }_{H^1}\leq C \lambda_0 \ll 1.
\end{equation*}
Assume that the solitary wave $Q\sts{x-ct}\e^{\i\omega t}$  is orbitally stable for the degenerate case $\sigma\in \sts{1,~2}$ and $c=2z_0\sqrt{\omega}$ by contradiction, then for sufficiently small $\lambda_0$, we obtain that the solution $u\sts{t}$ of \eqref{gdnls} with initial data $u_0$ are global, and there exists $0<\delta \leq \bar{\delta}_1$ such that
\begin{equation*}
  u\sts{t}\in\Ucal\sts{Q~,~\delta}, \text{~~for all~~} t\geq 0.
\end{equation*}
Let $\rho$ be defined by \eqref{rho}. By Lemma \Cref{lem:d2} and the regularity argument in \cite{MartelM:Instab:gKdV}, there exist $\Ccal^1$ functions $\lambda$, $y$ and $\gamma$ with respect to $t$ such that the radiation term
\begin{equation}
\label{epsn}
  \eps\sts{t,x}
  =
  u\sts{t,x+y\sts{t}}\e^{\i\gamma\sts{t}}
  -
  \bsts{
    Q\sts{x}
    +
    {\lambda\sts{t}}\phi\sts{x}
    +
    \rho({ {\lambda}\sts{t}})\B{Q}\sts{x}
    }
\end{equation}
satisfies the equation
\begin{align*}
 \i  {\partial_t\eps} & ~-~
  \Lcal
  \bsts{ {\lambda(t)}\phi
        +
        \rho({ {\lambda(t)}})\B{Q}
        +
        \eps\sts{t}
  } \\
  = &\;
  -\i {\dot \lambda\sts{t}}~
  \bsts{\phi
        +
        \dot{\rho}({\lambda\sts{t}})\B{Q}
  }
   +
    \i \sts{\dot y\sts{t}-c}~\bsts{Q + {\lambda\sts{t}}\phi
        +
        \rho({ {\lambda\sts{t}}})\B{Q}
        +
        \eps\sts{t}}_{x}
        \\
        & \;
  -\sts{\dot \gamma\sts{t}+\omega}
  \bsts{Q + {\lambda\sts{t}}\phi
        +
        \rho({ {\lambda\sts{t}}})\B{Q}
        +
        \eps\sts{t}}
  \\ &\;
  -
  \Rcal_2
  \bsts{Q,~{\lambda\sts{t}}\phi
        +
        \rho({ {\lambda\sts{t}}})\B{Q}
        +
        \eps\sts{t}
  }
  -
  \tilde{\Rcal}
  \bsts{Q,~{\lambda\sts{t}}\phi
        +
        \rho({ {\lambda\sts{t}}})\B{Q}
        +
        \eps\sts{t}
  },  \numberthis \label{eq:eps}
\end{align*}
where  $\Lcal {\eta} $,  $ \Rcal_2\sts{Q,~\eta}$ and  $ \tilde{\Rcal}\sts{Q,~\eta}$ are defined in Lemma \ref{lem:eps},
 and for all $t\geq 0$, we have
\begin{equation}
\label{nvt}
  \action{\eps\sts{t}}{\i{Q}}=0,~~\action{\eps\sts{t}}{{Q}_{x}}=0,~~
  \action{\eps\sts{t}}{\phi}=0,
\end{equation}
and
 \begin{equation*}
      \norm{\eps\sts{t}}_{H^1}
      +
      \abs{\lambda\sts{t}}  + \abs{y\sts{t}}  + \abs{\gamma\sts{t}}
    \leq C \delta.
    \end{equation*}
By choosing $\lambda_0$ sufficiently small  if necessary, we have
\begin{equation*}
  \max\{1,~C\}~ \delta
  <
  \min
  \ltl{\bar{\lambda}_0,~\bar{\lambda}_2,~\bar{\lambda}_3,
    \bar{\lambda}_4,~\bar{\lambda}_5,~\bar{\lambda}_6
  },
\end{equation*}
and
\begin{equation*}
  \max\{1,~C\}~ \delta
  <
  \min
  \ltl{
    ~\bar{\delta}_2,~\bar{\delta}_4,~\bar{\delta}_5,~\bar{\delta}_6
  },
\end{equation*}
where $C$ is the constant in Lemma \ref{lem:d2}.
Hence, by Lemma \ref{lem:lyot}, we have  for all $t\in [0,\infty)$,
\begin{equation}
\label{nlyot}
  \abs{ \dot{\lambda}\sts{t} }
  +
  \abs{ \dot{y}\sts{t}-c }
  +
  \abs{ \dot{\gamma}\sts{t}+\omega }
  \leq
  C\sts{Q}\bsts{ \abs{\lambda\sts{t}}+\norm{\eps\sts{t}}_{H^1} }.
\end{equation}
By the conservation laws of mass and momentum, we have
\begin{equation*}
  \Jcal\sts{Q + {\lambda\sts{t}}\phi
        +
        \rho({ {\lambda\sts{t}}})\B{Q}+\eps\sts{t}
    }=\Jcal\sts{Q}.
\end{equation*}
By Lemma \Cref{lem:eBQ}, we can obtain
\begin{equation}\label{neeBQ}
  {\action{\eps\sts{t}}{\B{Q}}}
  =
  \bo{
    \abs{ {\lambda\sts{t}}}~\norm{\eps\sts{t}}_{H^1}
    +
    \norm{\eps\sts{t}}_{H^1}^2
  }
  +
  \so{ {\lambda^2\sts{t}}}.
\end{equation}

\item\label{ps2} \textit{Efficient control of $\lambda\sts{t}$ and $\eps\sts{t}$.} Combining the above estimates, we can obtain the following estimates about $\lambda\sts{t}$ and the radiation term $\eps\sts{t}$,
\begin{prop}
\label{prop:ele}
Let $u_0 = Q + \lambda_0\phi +\widetilde{\rho}\sts{\lambda_0}\B{Q}$
with $0<\lambda_0\ll 1$, and
\begin{equation*}
  \Jcal\sts{u_0}=\Jcal\sts{Q}
\end{equation*}
wehre $\widetilde{\rho}$ is determined by Lemma \ref{lem:d1}. Suppose $u\sts{t}$ is the global solution of \eqref{gdnls} with initial data $u_0$. If for all $t\in [0,\infty)$, there exist $\Ccal^1$ functions  $\lambda$, $y$ and $\gamma$ with respect to $t$ such that the radiation term
\begin{equation}
  \eps\sts{t,x}
  =
  u\sts{t,x+y\sts{t}}\e^{\i\gamma\sts{t}}
  -
  \bsts{
    Q\sts{x}
    +
    {\lambda\sts{t}}~\phi\sts{x}
    +
    \rho({ {\lambda}\sts{t}})~\B{Q}\sts{x}
    }
\end{equation}
 satisfies
\begin{equation}
\label{tvt}
  \action{\eps\sts{t}}{\i{Q}}=0,~~
  \action{\eps\sts{t}}{{Q}_{x}}=0,~~
  \action{\eps\sts{t}}{\phi}=0,\text{~~for all~~} t\geq 0,
\end{equation}
where $\rho\sts{\lambda}$ is defined by \eqref{rho}, then for any $t\geq 0$, we have
\begin{equation}
\label{lt}
  \lambda\sts{t}\geq \frac{1}{2}\lambda_0,
\end{equation}
and
\begin{equation}
\label{eet}
  \norm{\eps\sts{t}}_{H^1\sts{\R}}^2
  \leq
  -\frac{2}{\kappa}\mathbf{d}_{\bm\xi}^{\tprime}\cdot
  \lambda^3\sts{t},
\end{equation}
where $\mathbf{d}_{\bm\xi}^{\tprime}$ is defined by \eqref{ndeg}, and $\kappa$ is the constant defined in Lemma \ref{lem:coer}.
\end{prop}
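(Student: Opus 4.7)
The plan is to combine conservation of the action functional with the refined Taylor expansion of Lemma~\ref{lem:tyl2} to produce a single scalar master inequality, and then close the two claims simultaneously via a continuity bootstrap. Since $\Ecal$, $\Mcal$, and $\Pcal$ are all conserved by the flow of \eqref{gdnls}, so is $\Scal=\Ecal+\omega\Mcal+c\Pcal$; moreover $\Scal$ is invariant under phase rotation and spatial translation, so the tilted function $v\sts{t}:=\Q+\lambda\sts{t}\phi+\rho\sts{\lambda\sts{t}}\B{\Q}+\eps\sts{t}$ satisfies $\Scal\sts{v\sts{t}}=\Scal\sts{u_0}$ for every $t\ge 0$. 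The initial value is computed by writing
\begin{equation*}
u_0=\bsts{\Q+\lambda_0\phi+\rho\sts{\lambda_0}\B{\Q}}+\bsts{\widetilde{\rho}\sts{\lambda_0}-\rho\sts{\lambda_0}}\B{\Q},
\end{equation*}
where the correction is $\so{\lambda_0^2}$ by Lemma~\ref{lem:d1}; Lemma~\ref{lem:tyl1} applied to the first bracket, together with $\Scal'\sts{\Q}=0$ and a one-term Taylor expansion in the second (using \eqref{expddp}), gives $\Scal\sts{u_0}=\Scal\sts{\Q}+\tfrac16\mathbf{d}_{\bm\xi}^{\tprime}\lambda_0^3+\so{\lambda_0^3}$. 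For the left-hand side I would apply Lemma~\ref{lem:tyl2} to $v\sts{t}$; its structural hypothesis \eqref{EpBQ} is supplied by Lemma~\ref{lem:eBQ}, because $\Jcal\sts{v\sts{t}}=\Jcal\sts{u_0}=\Jcal\sts{\Q}$ by conservation of mass and momentum.

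Equating the two expansions, using the sign $\mathbf{d}_{\bm\xi}^{\tprime}<0$, and invoking the coercivity of Corollary~\ref{cor:coer} (whose hypotheses are exactly \eqref{tvt} and the $\Jcal$-constraint) yields the master inequality
\begin{equation*}
\tfrac{\kappa}{2}\norm{\eps\sts{t}}_{H^1}^2\le\tfrac{|\mathbf{d}_{\bm\xi}^{\tprime}|}{6}\sts{\lambda^3\sts{t}-\lambda_0^3}+\so{\lambda_0^3+|\lambda\sts{t}|^3+\norm{\eps\sts{t}}_{H^1}^2+\lambda^4\sts{t}},
\end{equation*}
valid as long as $|\lambda\sts{t}|$ and $\norm{\eps\sts{t}}_{H^1}$ remain in the regime where Lemma~\ref{lem:tyl2} and Corollary~\ref{cor:coer} apply. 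This single scalar relation encodes both conclusions: nonnegativity of the left-hand side forces $\lambda^3\sts{t}$ to stay close to $\lambda_0^3$, and once that is known the same inequality caps $\norm{\eps\sts{t}}_{H^1}^2$ by a multiple of $\lambda^3\sts{t}$.

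To finish I would run a standard continuity/bootstrap argument. Set
\begin{equation*}
T^*:=\sup\set{T\ge 0}{\lambda\sts{s}\ge\tfrac12\lambda_0\ \text{and}\ \norm{\eps\sts{s}}_{H^1}^2\le-\tfrac{2}{\kappa}\mathbf{d}_{\bm\xi}^{\tprime}\lambda^3\sts{s}\ \text{for every}\ s\in[0,T]}.
\end{equation*}
Both bounds are strict at $t=0$, because the initial decomposition of $u_0$ gives $\lambda\sts{0}=\lambda_0+\so{\lambda_0^2}$ and $\norm{\eps\sts{0}}_{H^1}=\so{\lambda_0^2}$ via the implicit function theorem. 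Suppose for contradiction that $T^*<\infty$; by continuity, one of the two bounds saturates at $T^*$. If $\lambda\sts{T^*}=\tfrac12\lambda_0$, the master inequality specializes to $0\le-\tfrac{7}{48}|\mathbf{d}_{\bm\xi}^{\tprime}|\lambda_0^3+\so{\lambda_0^3}$, which fails for $\lambda_0$ small. If instead $\norm{\eps\sts{T^*}}_{H^1}^2=\tfrac{2}{\kappa}|\mathbf{d}_{\bm\xi}^{\tprime}|\lambda^3\sts{T^*}$, substitution yields $\tfrac{5}{6}|\mathbf{d}_{\bm\xi}^{\tprime}|\lambda^3\sts{T^*}+\tfrac{1}{6}|\mathbf{d}_{\bm\xi}^{\tprime}|\lambda_0^3\le\so{\lambda_0^3}$, equally impossible since the left side is positive of order at least $\lambda_0^3$.

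The principal technical obstacle will be making the $o\sts{\cdot}$ error terms uniform in $t$. On the bootstrap interval $[0,T^*]$ the a priori cap $|\lambda\sts{t}|\le C\delta$ from Lemma~\ref{lem:d2} and the bootstrap hypothesis $\norm{\eps\sts{t}}_{H^1}^2\le C\lambda_0^3$ together let one convert every appearance of $\so{|\lambda\sts{t}|^3+\norm{\eps\sts{t}}_{H^1}^2+\lambda^4\sts{t}}$ into $\so{\lambda_0^3}$ with constants independent of $t$. Choosing $\lambda_0$ small enough relative to these absolute constants then makes the two sign contradictions above rigorous, closing the bootstrap and proving both \eqref{lt} and \eqref{eet}.
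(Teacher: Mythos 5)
Your proposal is correct and follows essentially the same route as the paper: conservation of $\Scal_{\omega,c}$ under the flow, the expansion of $\Scal\sts{u_0}$ at $t=0$ via Lemma \ref{lem:tyl1}, the expansion of $\Scal\sts{u\sts{t}}$ via Lemma \ref{lem:eBQ} and Lemma \ref{lem:tyl2}, and the coercivity of Corollary \ref{cor:coer}, all combined with the sign $\mathbf{d}_{\bm\xi}^{\tprime}<0$. The only difference is that you close the argument with an explicit continuity bootstrap, whereas the paper absorbs the $\so{\abs{\lambda\sts{t}}^3}$ errors directly into the cubic term in a single chain of inequalities; your version is in fact the more careful one, since that absorption tacitly requires $\lambda\sts{t}>0$, which is precisely what your bootstrap set guarantees.
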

\begin{proof} By the assumption $\mathbf{d}_{\bm\xi}^{\tprime}<0$ and the fact that $\lambda_0>0$, we have
\begin{equation*}
  \mathbf{d}_{\bm\xi}^{\tprime}\cdot \left(\lambda_0\right)^3<0.
\end{equation*}

Firstly, by $0<\lambda_0\ll 1$ and the Taylor series expression, we have for $t=0$ that
\begin{align*}
\Scal\sts{
    u_0
  }
  -
  \Scal\sts{
    Q
  }
=
&
  \Scal\sts{
    Q+\lambda_0\phi+\widetilde{\rho}\sts{\lambda_0}\B{Q}
  }
  -
  \Scal\sts{
    Q
  }
\\
  =
&
\frac{1}{2}
  \Scal^{\dprime}\sts{Q}
  \sts{
    \lambda_0\phi+\widetilde{\rho}\sts{\lambda_0}\B{Q},~
    \lambda_0\phi+\widetilde{\rho}\sts{\lambda_0}\B{Q}
  }
  \\
&
  +\frac{1}{6}
  \Scal^{\tprime}\sts{Q}
  \sts{
    \lambda_0\phi+\widetilde{\rho}\sts{\lambda_0}\B{Q},~
    \lambda_0\phi+\widetilde{\rho}\sts{\lambda_0}\B{Q},~
    \lambda_0\phi+\widetilde{\rho}\sts{\lambda_0}\B{Q}
  }
  \\
  &
  +
  \so{
    \norm{ \lambda_0\phi+\widetilde{\rho}\sts{\lambda_0}\B{Q}
    }_{H^1}^3
  }. \numberthis\label{ST1}
\end{align*}
where we used the fact that $\Scal'\sts{Q}=0$. By the expression of $\widetilde{\rho}$ in Lemma \ref{lem:d1}, we have
\begin{align*}
&\;
\Scal^{\dprime}\sts{Q}
\sts{
  \lambda_0\phi+\widetilde{\rho}\sts{\lambda_0}\B{Q},~
  \lambda_0\phi+\widetilde{\rho}\sts{\lambda_0}\B{Q}
}
\\
=
& \;
\left(\lambda_0\right)^2
\Scal^{\dprime}\sts{Q}
\sts{\phi,~\phi}
+
2\lambda_0\widetilde{\rho}\sts{\lambda_0}
\Scal^{\dprime}\sts{Q}
\sts{\phi,~ \B{Q}}
+
\widetilde{\rho}\sts{\lambda_0}^2\Scal^{\dprime}\sts{Q}
\sts{\B{Q},~\B{Q}}
\\
=
&
-
\left(\lambda_0\right)^2
\action{\B{Q}}{\phi}
-
2\lambda_0\widetilde{\rho}\sts{\lambda_0}
\action{\B{Q}}{\B{Q}}
+
\widetilde{\rho}\sts{\lambda_0}^2\Scal^{\dprime}\sts{Q}
\sts{\B{Q},~\B{Q}}
\\
=
&
-
2\lambda_0\widetilde{\rho}\sts{\lambda_0}
\action{\B{Q}}{\B{Q}}
+
\widetilde{\rho}\sts{\lambda_0}^2\Scal^{\dprime}\sts{Q}
\sts{\B{Q},~\B{Q}}
\\
=
&
\left(\lambda_0\right)^3
\action{\B{\phi}}{\phi}
+
\so{\abs{\lambda_0}^3},
\end{align*}
where we used the fact that $\action{\B{Q}}{\phi}=0$ in the fourth equality. Therefore, by inserting the above equality into
\eqref{ST1}, we have by \eqref{expdtp} that
\begin{align*}
\Scal\sts{
    u_0
  }
  -
  \Scal\sts{
    Q
  }
=
&  \left(
  \frac{1}{2}
    \action{\B{\phi}}{\phi}
  +\frac{1}{6}
  \Scal^{\tprime}\sts{Q}
  \sts{\phi,~\phi,~\phi} \right) \cdot \left(\lambda_0\right)^3
  +
  \so{\abs{\lambda_0}^3}
\\
  =
& \;
  \frac{1}{6}~
    \mathbf{d}_{\bm\xi}^{\tprime}\cdot \left( \lambda_0\right)^3
  +
  \so{\abs{\lambda_0}^3},
  \numberthis\label{eST1}
\end{align*}
where we used the fact that
$\rho\sts{\lambda}=\so{\abs{\lambda}}$
for $\abs{\lambda}\ll 1$.

Secondly, by Lemma \ref{lem:eBQ}, Lemma \ref{lem:tyl2} and Corollary \ref{cor:coer}, we know that for any $t\geq 0$,  there exists some $\kappa>0$ such that
\begin{align*}
  \Scal\sts{
    u\sts{t}
  }
  -
  \Scal\sts{
    Q
  }
 = & \;
  \Scal\sts{
    Q
    +
    \lambda\sts{t}\phi
    +
    \rho\sts{\lambda\sts{t}}\B{Q}+\eps\sts{t}
  }
  -
  \Scal\sts{
    Q
  }
\\
 = &\;
  \frac{1}{6}
    \mathbf{d}_{\bm\xi}^{\tprime}\cdot \left(\lambda\sts{t}\right)^3
  +
\Scal''\sts{Q}  \sts{\eps\sts{t},\eps\sts{t}}
+
  \so{\abs{\lambda\sts{t}}^3}
  +
  \so{ \norm{\eps\sts{t}}_{H^1}^2 }
     \\
  \geq  &\;
  \frac{1}{6}
    \mathbf{d}_{\bm\xi}^{\tprime}\cdot \left(\lambda\sts{t}\right)^3
  +
\frac{\kappa}{4}\norm{\eps\sts{t}}_{H^1}^2
+
  \so{\abs{\lambda\sts{t}}^3}
  +
  \so{ \norm{\eps\sts{t}}_{H^1}^2 }.
     \numberthis\label{eST2}
\end{align*}

Finally, by the conservation laws
of mass, momentum and energy, we have
\begin{equation*}
  \Scal\sts{u\sts{t}}=\Scal\sts{u_0},
  \text{~~for any ~~} t\geq 0.
\end{equation*}
Therefore, by \eqref{eST1}, \eqref{eST2} and the fact that
$\mathbf{d}_{\bm\xi}^{\tprime}<0$, we have
\begin{align*}
\frac{1}{24}~
    \mathbf{d}_{\bm\xi}^{\tprime}\cdot \left(\lambda_0\right)^3
\geq
& \;
  \frac{1}{6}~
    \mathbf{d}_{\bm\xi}^{\tprime}\cdot \left(\lambda_0\right)^3
  +
  \so{\abs{\lambda_0}^3}
\\
\geq
& \;
\frac{1}{6}~
    \mathbf{d}_{\bm\xi}^{\tprime}\cdot \left(\lambda\sts{t}\right)^3
  +
\frac{\kappa}{4}~\norm{\eps\sts{t}}_{H^1}^2
+
  \so{\abs{\lambda\sts{t}}^3}
  +
  \so{ \norm{\eps\sts{t}}_{H^1}^2 }
\\
\geq
&
\frac{1}{3}~
    \mathbf{d}_{\bm\xi}^{\tprime}\cdot \left(\lambda\sts{t}\right)^3
  +
\frac{\kappa}{6}~\norm{\eps\sts{t}}_{H^1}^2,
\end{align*}
which implies that
\begin{equation*}
  \lambda\sts{t}\geq \frac{1}{2}\lambda_0,
\text{~~ and ~~}
  \norm{\eps \sts{t}}_{H^1}^2
  \leq
  -
  \frac{2}{\kappa}~
  \mathbf{d}_{\bm\xi}^{\tprime}\cdot \left(\lambda\sts{t}\right)^3.
\end{equation*}
This concludes the proof of Proposition \Cref{prop:ele}.
\end{proof}

\item\label{ps3} \textit{Monotonicity formula.} Let us first define
\begin{equation}
\label{Phin}
  \Phi\sts{t,x} = \phi\sts{ x} + \alpha\lambda\sts{t } Q\sts{ x} + \beta\lambda\sts{ t}\i Q_x\sts{ x},
\end{equation}
where $\alpha$ and $\beta$ are chosen as following
\begin{equation*}
  \alpha
  =
  -\frac{
    \det\begin{bmatrix}
          \action{\phi}{\phi}  & \action{\i Q_x}{Q} \\
          \action{\i \phi_x}{\phi} & \action{\i Q_x}{\i Q_x} \\
         \end{bmatrix}
  }{
    \det\begin{bmatrix}
          \action{ Q}{  Q}  & \action{\i Q_x}{ Q} \\
          \action{\i Q_x}{ Q} & \action{\i Q_x}{\i Q_x} \\
         \end{bmatrix}
  },
\quad
  \beta
  =
  -\frac{
    \det\begin{bmatrix}
          \action{ Q}{ Q}  &  \action{\phi}{\phi} \\
          \action{\i Q_x}{ Q} &  \action{\i\phi_x}{\phi} \\
         \end{bmatrix}
  }{
    \det\begin{bmatrix}
          \action{ Q}{  Q}  & \action{\i Q_x}{ Q} \\
          \action{\i Q_x}{ Q} & \action{\i Q_x}{\i Q_x} \\
         \end{bmatrix}
  },
\end{equation*}
which are the solutions to the following system
\begin{align}
  \begin{bmatrix}
          \action{ Q}{  Q}  & \action{\i Q_x}{ Q} \\
          \action{\i Q_x}{ Q} & \action{\i Q_x}{\i Q_x} \\
  \end{bmatrix}
  \begin{bmatrix}
    \alpha \\
    \beta \\
  \end{bmatrix}
  =
  -
  \begin{bmatrix}
    \action{\phi}{\phi} \\
    \action{\i\phi_x}{\phi} \\
  \end{bmatrix},
\label{alpbet}
\end{align}
and imply that
\begin{align*}
&
  \baction{\B{Q}
}{
    \alpha Q
    + \beta\i Q_x}
=
-
\baction{\B{\phi}}{\phi}.
\numberthis\label{xialpbet}
\end{align*}

Next, we can define the Virial quantity as following
\begin{equation}
\label{I}
  \Iscr\sts{t} =\action{\i\eps\sts{t}}{\Phi\sts{t}}.
\end{equation}
By \eqref{nvt}, the straightforward inspection can show that
\begin{align*}
  \frac{\d}{\d t}\Iscr\sts{t}
  =&
  \action{\i\partial_t \eps}{\Phi\sts{t}}
  +\alpha\dot \lambda\sts{t} \action{\i\eps\sts{t} }{Q}
  +\beta \dot \lambda\sts{t} \action{\i\eps\sts{t} }{\i Q_x}
  \\
  =&
  \action{\i\partial_t \eps}{\Phi\sts{t}}.
  \numberthis\label{It}
\end{align*}

By inserting \eqref{eq:eps} into \eqref{It}, we have
\begin{align*}
\frac{\d}{\d t}\Iscr\sts{t}
=
&
-\dot \lambda\sts{t}
  \action{\i\sts{\phi
        +
        \dot{\rho}({\lambda(t)})\B{Q}
  }
}{\Phi\sts{t}}\numberthis \label{Ilt}
\\
&
+
\sts{\dot y\sts{t}-c}~
\action{\i\sts{Q + {\lambda\sts{t}}\phi
        +
        \rho({ {\lambda\sts{t}}})\B{Q}
        +
        \eps\sts{t}}_{x}
}{\Phi\sts{t}}
\numberthis \label{Iyt}
\\
&
-\sts{\dot \gamma\sts{t}+\omega}
\action{\sts{Q + {\lambda\sts{t}}\phi
        +
        \rho({ {\lambda\sts{t}}})\B{Q}
        +
        \eps\sts{t}}
}{\Phi\sts{t}}
\numberthis \label{Iot}
\\
&
+
\action{\Lcal
  \sts{ {\lambda\sts{t}}\phi
        +
        \rho({ {\lambda\sts{t}}})\B{Q}
        +
        \eps\sts{t}
  }
}{\Phi\sts{t}}
\numberthis \label{ILt}
\\
&
-
\action{\Rcal_2
  \sts{Q,~{\lambda\sts{t}}\phi
        +
        \rho({ {\lambda\sts{t}}})\B{Q}
        +
        \eps\sts{t}
  }
}{\Phi\sts{t}}
\numberthis \label{IR2t}
\\
&
-
\action{\tilde{\Rcal}
  \sts{Q,~{\lambda\sts{t}}\phi
        +
        \rho({ {\lambda\sts{t}}})\B{Q}
        +
        \eps\sts{t}
    }
}{\Phi\sts{t}}
\numberthis \label{ItRt}.
\end{align*}
We now estimate \eqref{Ilt}-\eqref{ItRt}.

\noindent\text{\bf Estimate of \eqref{Ilt}.} It follows from \eqref{dgf} that
\begin{align*}
& \; \baction{
    \i
    \sts{\phi
        +
        \dot{\rho}({\lambda\sts{t}})\B{Q}
    }
}{
        \Phi\sts{t}
}\\
=
&\;
\baction{
    \i
    \sts{\phi
        +
        \dot{\rho}({\lambda\sts{t}})\B{Q}
    }
}{
    \phi
    + \alpha\lambda\sts{t} Q
    + \beta\lambda\sts{t}\i Q_x
}
\\
=
& \;
\lambda\sts{t}
\baction{
    \i\phi
}{
    \alpha Q
    + \beta\i Q_x
}
+
\dot{\rho}({\lambda\sts{t}})
\baction{
    \i
        \B{Q}
}{
    \phi
}\\
=& \; 0.
\end{align*}
Therefore, we have
\begin{equation}
\label{eIlt}
  \eqref{Ilt}=0.
\end{equation}

\noindent\text{\bf Estimate of \eqref{Iyt}.} By \eqref{dgf} and \eqref{alpbet}, we have
\begin{align*}
&
\baction{
  \i\sts{Q + {\lambda\sts{t}}\phi
        +
        \rho({ {\lambda\sts{t}}})\B{Q}
        +
        \eps\sts{t}}_{x}
}{
    \Phi\sts{t}
}
\\
=
&
\baction{
  \i\sts{Q + {\lambda\sts{t}}\phi
        +
        \rho({ {\lambda\sts{t}}})\B{Q}
        +
        \eps\sts{t}}_{x}
}{
    \phi
    + \alpha\lambda\sts{t} Q
    + \beta\lambda\sts{t}\i Q_x
}
\\
=
&
\lambda\sts{t}
\baction{
  \i{Q}_{x}
}{
    \alpha Q
    + \beta\i Q_x
}
+
\lambda\sts{t}
\baction{
  \i{\phi}_{x}
}{
    \phi
}
+
\lambda\sts{t}^2
\baction{
  \i{\phi}_{x}
}{
    \alpha Q
    + \beta\i Q_x
}
\\
&
+
\rho({ {\lambda\sts{t}}})
\baction{
  \i\sts{
\B{Q}
        }_{x}
}{
    \phi
    + \alpha\lambda\sts{t} Q
    + \beta\lambda\sts{t}\i Q_x
}
+
\baction{
  \i\partial_x {\eps\sts{t}}
}{
    \phi
    + \alpha\lambda\sts{t} Q
    + \beta\lambda\sts{t} \i Q_x
}
\\
=
&
\lambda\sts{t}^2
\baction{
  \i{\phi}_{x}
}{
    \alpha Q
    + \beta\i Q_x
}
+
\rho({ {\lambda\sts{t}}})
\baction{
  \i\sts{
\B{Q}
        }_{x}
}{
    \phi
    + \alpha\lambda\sts{t} Q
    + \beta\lambda\sts{t}\i Q_x
}
\\
&
+
\baction{
  \i\partial_x {\eps\sts{t}}
}{
    \phi
    + \alpha\lambda\sts{t} Q
    + \beta\lambda\sts{t}\i Q_x
}.
\end{align*}
Thus, by \eqref{rho} and \eqref{eet}, we have
\begin{align*}
  \baction{
  \i\sts{Q + {\lambda\sts{t}}\phi
        +
        \rho({ {\lambda\sts{t}}})\B{Q}
        +
        \eps\sts{t}}_{x}
  }{
    \Phi\sts{t}
  }
=
\bo{ \lambda\sts{t}^2 +\norm{\eps\sts{t}}_{H^1} }
=
\so{ \abs{\lambda\sts{t}} },
\end{align*}
Combining the above estimate with \eqref{nlyot}
and \eqref{eet}, we have
\begin{equation}\label{eyot}
  \eqref{Iyt}
  =
  \bo{ \lambda\sts{t}^2 +\norm{\eps\sts{t}}_{H^1\sts{\R}} } \cdot
  \bo{ \abs{\lambda\sts{t}} +\norm{\eps\sts{t}}_{H^1\sts{\R}} }
  =
  \so{ {\lambda\sts{t}}^2 }.
\end{equation}

\noindent\text{\bf Estimate of \eqref{Iot}.} By \eqref{dgf}
and \eqref{alpbet}, we have
\begin{align*}
&
\baction{ {Q + {\lambda\sts{t}}\phi
        +
        \rho({ {\lambda\sts{t}}})\B{Q}
        +
        \eps\sts{t}}
}{\Phi\sts{t}}
\\
=
&
\baction{ {Q + {\lambda\sts{t}}\phi
        +
        \rho({ {\lambda\sts{t}}})\B{Q}
        +
        \eps\sts{t}}
}{\phi
    + \alpha\lambda\sts{t} Q
    + \beta\lambda\sts{t}\i Q_x}
\\
=
&
\lambda\sts{t}
\bsts{
    \baction{ {Q
        }
    }{
    \alpha Q
    + \beta\i Q_x}
    +
    \baction{ {\phi}}{\phi}
}
+
\baction{ {{\lambda\sts{t}}\phi
        }
}{
    \alpha\lambda\sts{t} Q
    + \beta\lambda\sts{t}\i Q_x}
\\
&
+
\baction{
        \rho({ {\lambda\sts{t}}})\B{Q}+\eps\sts{t}
}{\phi
    + \alpha\lambda\sts{t} Q
    + \beta\lambda\sts{t}\i Q_x}
\\
=
&
\baction{ {{\lambda\sts{t}}\phi
        }
}{
    \alpha\lambda\sts{t} Q
    + \beta\lambda\sts{t}\i Q_x}
+
\baction{
        \rho({ {\lambda\sts{t}}})\B{Q}+\eps\sts{t}
}{\phi
    + \alpha\lambda\sts{t} Q
    + \beta\lambda\sts{t}\i Q_x},
\end{align*}
which together with \eqref{dgf}, \eqref{rho}  implies that
\begin{align*}
{\baction{ {Q + {\lambda\sts{t}}\phi
        +
        \rho({ {\lambda\sts{t}}})\B{Q}
        +
        \eps\sts{t}}
        }{\Phi\sts{t}}
}
=
\bo{ \lambda\sts{t}^2 + \norm{\eps\sts{t}}_{H^1\sts{\R}} }.
\end{align*}
Combining the above estimate with \eqref{nlyot} and \eqref{eet}, we have
\begin{equation}
\label{eIot}
  \eqref{Iot}
  =
  \bo{ \lambda\sts{t}^2 +\norm{\eps\sts{t}}_{H^1\sts{\R}} }\cdot
  \bo{ \abs{\lambda\sts{t}} +\norm{\eps\sts{t}}_{H^1\sts{\R}} }
  =
  \so{ {\lambda\sts{t}}^2 }.
\end{equation}

\noindent\text{\bf Estimate of \eqref{ILt}.} By integrating by parts,
\eqref{dgf}, \eqref{expddp}, \eqref{rho}, \eqref{LS2d} and \eqref{xialpbet}, we have
\begin{align*}
& \eqref{ILt}\\
=
&
\baction{\Lcal
  \sts{ {\lambda\sts{t}}\phi
        +
        \rho({ {\lambda\sts{t}}})\B{Q}
        +
        \eps\sts{t}
  }
}{\phi
    + \alpha\lambda\sts{t} Q
    + \beta\lambda\sts{t}\i Q_x}
\\
=
&
-{\lambda\sts{t}}
\baction{\B{Q}
}{\phi
    + \alpha\lambda\sts{t} Q
    + \beta\lambda\sts{t}\i Q_x}
+
\rho({ {\lambda\sts{t}}})
\baction{
   {\B{Q}}
}{\Lcal\sts{\phi
    + \alpha\lambda\sts{t} Q
    + \beta\lambda\sts{t}\i Q_x}
}
\\
&
+
\baction{
  {\eps\sts{t}}
}{\Lcal\sts{\phi
    + \alpha\lambda\sts{t} Q
    + \beta\lambda\sts{t}\i Q_x}
}
\\
=
&
-{\lambda\sts{t}^2}
\baction{\B{Q}
}{
    \alpha Q
    + \beta\i Q_x}
-
\rho({ {\lambda\sts{t}}})
\baction{
   {\B{Q}}
}{{\B{Q}}
}
+
\lambda\sts{t}\rho({ {\lambda\sts{t}}})
\baction{
   {\B{Q}}
}{\Lcal\sts{
    \alpha Q
    + \beta\i Q_x}
}
\\
&
-
\baction{
  {\eps\sts{t}}
}{
    {\B{Q}}
}
+
\lambda\sts{t}\baction{
  {\eps\sts{t}}
}{\Lcal\sts{
    \alpha Q
    + \beta\i Q_x}
}
\\
=
&
{\lambda\sts{t}^2}
\baction{\B{\phi}}{\phi}
+
     \frac{\action{\B{\phi}}{\phi} }{2\action{\B{Q}}{\B{Q}}}\cdot \lambda\sts{t}^2\cdot
\baction{
   {\B{Q}}
}{{\B{Q}}
}
+
\lambda\sts{t}\rho({ {\lambda\sts{t}}})
\baction{
   {\B{Q}}
}{\Lcal\sts{
    \alpha Q
    + \beta\i Q_x}
}
\\
&
-
\baction{
  {\eps\sts{t}}
}{
    {\B{Q}}
}
+
\lambda\sts{t}\baction{
  {\eps\sts{t}}
}{\Lcal\sts{
    \alpha Q
    + \beta\i Q_x}
}.
\end{align*}
It follows that from \eqref{neeBQ} that
\begin{align*}
\eqref{ILt}
=
 \frac{3}{2}\lambda\sts{t}^2
 \baction{\B{\phi}}{\phi}
+
\bo{ \abs{ {\lambda\sts{t}}}~\norm{\eps\sts{t}}_{H^1} + \norm{\eps\sts{t}}_{H^1}^2 } +\so{ {\lambda\sts{t}}^2},
\end{align*}
which together with\eqref{eet} implies that
\begin{equation*}
\eqref{ILt}
=
\frac{3}{2}\lambda\sts{t}^2
 \baction{\B{\phi}}{\phi}
+
\so{ {\lambda\sts{t}}^2}.
\end{equation*}

\noindent\text{\bf Estimate of\eqref{IR2t}.} By \eqref{rho} and \eqref{R2S3d}, we have
\begin{align*}
&
\baction{\Rcal_2
  \sts{Q,~ {\lambda\sts{t}}\phi
        +
        \rho({ {\lambda\sts{t}}})\B{Q}
        +
        \eps\sts{t}
  }
}{
    \Phi\sts{t}
}
\\
=
&
\baction{\Rcal_2
  \sts{Q,~{\lambda\sts{t}}\phi
        +
        \rho({ {\lambda\sts{t}}})\B{Q}
        +
        \eps\sts{t}
  }
}{
    \phi
    + \alpha\lambda\sts{t} Q
    + \beta\lambda\sts{t}\i Q_x
}
\\
=
&
-
\frac{1}{2}
\Scal^{\tprime}\sts{Q}
    \sts{
    {\lambda\sts{t}}\phi
        +
        \rho({ {\lambda\sts{t}}})\B{Q}
        +
        \eps\sts{t},
    {\lambda\sts{t}}\phi
        +
        \rho({ {\lambda\sts{t}}})\B{Q}
        +
        \eps\sts{t},
    \phi
    + \alpha\lambda\sts{t} Q
    + \beta\lambda\sts{t}\i Q_x
    }
\\
=
&
-
\frac{1}{2}{\lambda\sts{t}}^2
\Scal^{\tprime}\sts{Q}\sts{\phi,\phi,\phi}
+
\bo{
    \abs{\lambda\sts{t}}^3
    +
    \abs{\lambda\sts{t}}\norm{\eps\sts{t}}_{H^1\sts{\R}}
    +
    \norm{\eps\sts{t}}_{H^1\sts{\R}}^2
},
\end{align*}
which together with \eqref{eet} implies that
\begin{equation}
\label{eR2t}
\eqref{IR2t}
=
   \frac{1}{2}{\lambda\sts{t}}^2
    \Scal^{\tprime}\sts{Q}
    \sts{\phi,\phi,\phi
    }
+
\so{\lambda\sts{t}^2}.
\end{equation}

\noindent\text{\bf Estimate of \eqref{ItRt}.} By \eqref{peRt}, it is easy to see that
\begin{align*}
&
\babs{
    \baction{
        \tilde{\Rcal}
        \sts{Q,{\lambda\sts{t}}\phi
            +
            \rho({ {\lambda\sts{t}}})\B{Q}
            +
            \eps\sts{t}
        }
    }{
        \Phi\sts{t}
    }
}
\\
\leq
&\;
C\sts{Q}
\int\abs{Q_x}
    \abs{{\lambda\sts{t}}\phi
            +
            \rho({ {\lambda\sts{t}}})\B{Q}
            +
            \eps\sts{t}}^{2\sigma}
    \abs{\Phi\sts{t}} \; dx
\\
&
  +
  C\sts{Q}
  \int
  \abs{Q}^{2\sigma-2}
  \abs{{\lambda\sts{t}}\phi
            +
            \rho({ {\lambda\sts{t}}})\B{Q}
            +
            \eps\sts{t}}^2
  \abs{\sts{{\lambda\sts{t}}\phi
            +
            \rho({ {\lambda\sts{t}}})\B{Q}
            +
            \eps\sts{t}}_x}~~
  \abs{\Phi\sts{t}} \; dx
\\
&
+
C\sts{Q}
\int\sts{
  \abs{{\lambda\sts{t}}\phi
            +
            \rho({ {\lambda\sts{t}}})\B{Q}
            +
            \eps\sts{t}}^{2\sigma}
  \abs{\sts{{\lambda\sts{t}}\phi
            +
            \rho({ {\lambda\sts{t}}})\B{Q}
            +
            \eps\sts{t}}_x}
  }\abs{\Phi\sts{t}} \; dx
\\
\leq
& \;
C\sts{Q}
\norm{\eps\sts{t}}_{H^1\sts{\R}}^{2\sigma}
+
\so{\lambda\sts{t}^2},
\end{align*}
which together with \eqref{eet} implies for $\sigma>1$ that
\begin{equation}\label{eItRt}
\eqref{ItRt}
=
\so{\lambda\sts{t}^2}.
\end{equation}
Therefore,  by summing up \eqref{eIlt}-\eqref{eItRt},
we can obtain from \eqref{exptdtp} that
\begin{align*}
\frac{\d}{\d t}\Iscr\sts{t}
=
  \frac{1}{2}~\mathbf{d}_{\bm\xi}^{\tprime}\cdot \lambda\sts{t}^2
  +
  \so{ \lambda\sts{t}^2 }.
  \numberthis\label{eIt}
\end{align*}

\item\label{ps4} \textit{Conclusion.}
On the one hand, by \eqref{epsn} and \eqref{Phin},
it is easy to see that
$\norm{\eps\sts{t}}_{H^1}$ and
$\norm{\Phi\sts{t}}_{H^1}$
are uniformly bounded with respect to $t$. Therefore, by
Cauchy-Schwarz's inequality, we have
\begin{equation}\label{tIbd}
  \abs{\Iscr\sts{t}}
  \text{~~uniformly bounded with respect to~~}
  t.
\end{equation}

On the other hand, since $\mathbf{d}_{\bm\xi}^{\tprime}<0$,
it follows from \eqref{lt} and \eqref{eIt} that
\begin{align*}
\frac{\d}{\d t}\Iscr\sts{t}
=
\frac{1}{2}~\mathbf{d}_{\bm\xi}^{\tprime}\cdot\lambda\sts{t}^2
+
\so{ \lambda\sts{t}^2 }
 \leq
 \frac{1}{4}~\mathbf{d}_{\bm\xi}^{\tprime}\cdot\lambda\sts{t}^2
\leq
\frac{1}{16}~\mathbf{d}_{\bm\xi}^{\tprime}\cdot \left(\lambda_0\right)^2,
\end{align*}
by integrating the above inequality
over $[0, ~t)$, we can obtain that
\begin{align*}
\Iscr\sts{t}
=
&
\Iscr\sts{0}
+
\int_{0}^{t}\Iscr^{\prime}\sts{s}\d s
\leq
\Iscr\sts{0}
+
\frac{1}{16}~\mathbf{d}_{\bm\xi}^{\tprime}\cdot\left(\lambda_0\right)^2 t,
\end{align*}
which means that
\begin{equation*}
  \lim_{t\to +\infty}\Iscr\sts{t}=-\infty,
\end{equation*}
this is a contradiction with \eqref{tIbd}.
\end{enumerate}

Above all, we complete the proof of Theorem \ref{thm}.
\end{proof}

\appendix
\section{Proof of Lemma \ref{lem:S3d}} \label{app:dds}
We will give the proof of Lemma \ref{lem:S3d} in this part and will drop the subscripts $\omega,~c$ for convenience if without confusion.

\begin{proof}
First, by the definition of $\Scal$, we have
\begin{equation}
	\Scal\sts{u}
    =
    \Re\int
    \sts{
        \frac{1}{2}\abs{{u}_x}^2
        +
        \frac{\omega}{2}\abs{u}^2
        +
        \frac{c}{2}{\i\bar{u}{u}_x}
        -
        \frac{1}{2\sigma + 2}
        {\i\abs{u}^{2\sigma}\bar{u}{u}_x}
    }.
\end{equation}
Since $\sigma>1$, it follows that
$\Scal:H^1\sts{\R}\mapsto\R$ is of class $\Ccal^2$,
therefore for any $u\in H^1\sts{\R}$ and $g,~h\in H^1\sts{\R}$, the straightforward calculations give that
\begin{equation*}
  \action{\Scal^{\prime}\sts{u}}{h}
  =
  \Re\int
  \sts{
  u_{x}\overline{{h}_{x}}
  +
  {\omega~} u\bar{h}
  +
  {c~}\i{u}_x\bar{h}
  -
    \i\abs{u}^{2\sigma}{u}_x\bar{h}
  }
  ,
\end{equation*}
and
\begin{align*}
  \Scal^{\dprime}\sts{u}\sts{h,g}
  =& \;
  \Re\int
  \sts{
   h_x\overline{g_x} + \omega~ h\bar{g}+c~h_x\bar{g}
   }
   \\
   & \;
   -
   \Re\int
   \sts{
   \i\abs{u}^{2\sigma}h_x\bar{g}
   +
   \i\sigma\abs{u}^{2\sigma-2}\bar{u}u_x h\bar{g}
   +
   \i\sigma\abs{u}^{2\sigma-2}{u}{u}_x \bar{h}\bar{g}
   },\numberthis\label{appS2d}
\end{align*}
and it is easy to see that
$
  \Scal^{\tprime}\sts{Q}\sts{h,g,f}
  =
\Ncal^{\tprime}\sts{Q}\sts{h,g,f}
$
if either $  \Scal^{\tprime}\sts{Q} $ or $\Ncal^{\tprime}\sts{Q}$ exists. In order to show that $\Scal$ is of class $\mathcal{C}^3$ at $Q$, we only need to prove that there
exists a linear operator $\tilde{\Tcal}:~H^1\sts{\R} \mapsto \sts{H^1\sts{\R}\times H^1\sts{\R}}'$ such that for any $g, ~h \in H^1\sts{\R}$, we have
\begin{equation*}
\abs{
  \Scal^{\dprime}\sts{Q+f}\sts{h,g}
  -
  \Scal^{\dprime}\sts{Q}\sts{h,g}
  -
  \tilde{\Tcal}\sts{f}\sts{h,g}
}
=
\so{\norm{f}_{H^1}}
\end{equation*}
as $\norm{f}_{H^1}$ goes to zero. By \eqref{appS2d}, we have
\begin{align*}
  &
  \Scal^{\dprime}\sts{Q+f}\sts{h,g}
  -
  \Scal^{\dprime}\sts{Q}\sts{h,g}
  \\
  =
  &
  \Re\int \left(
   h_x\overline{g_x} + \omega~ h\bar{g}+c~h_x\bar{g} \right) \d x
      -
   \Re\int
   \sts{
       \i\abs{{Q+f}}^{2\sigma}h_x\bar{g}
       +
       \i\sigma
       \abs{{Q+f}}^{2\sigma-2}\sts{\overline{Q+f}}\sts{Q+f}_x h\bar{g}
    } \d x
    \\
    &
    -
   \Re\int
   \sts{
       \i\sigma
       \abs{{Q+f}}^{2\sigma-2}{\sts{Q+f}}
       \sts{{Q}+{f}}_x \bar{h}\bar{g}
   }\d x
   -
   \Re\int
   \sts{
       h_x\overline{g_x} + \omega~ h\bar{g}+c~h_x\bar{g}
   } \d x
   \\
   &
   +
   \Re\int
   \sts{
       \i\abs{Q}^{2\sigma}h_x\bar{f}
       +
       \i\sigma\abs{Q}^{2\sigma-2}\bar{Q}Q_x h\bar{f}
       +
       \i\sigma\abs{Q}^{2\sigma-2}{Q}{Q}_x \bar{h}\bar{f}
   }\d x
\\
=
&
   -
   \Re\int
   \sts{
       \i\abs{{Q+f}}^{2\sigma}h_x\bar{g}
       -
       \i\abs{Q}^{2\sigma}h_x\bar{g}
   } \d x
   \numberthis \label{a1}
   \\
   &
   -
   \Re\int
    \i\sigma
   \sts{
       \abs{{Q+f}}^{2\sigma-2}\sts{\overline{Q+f}}\sts{Q+f}_x h\bar{g}
       -
       \abs{Q}^{2\sigma-2}\bar{Q}Q_x h\bar{g}
   }\d x
   \numberthis \label{a2}
   \\
   &
   -
   \Re\int
   \i\sigma
   \sts{
        \abs{{Q+f}}^{2\sigma-2}\sts{{Q+f}}\sts{ {Q}+{f}}_x \bar{h}\bar{g}
        -
        \abs{Q}^{2\sigma-2}{Q}{Q}_x \bar{h}\bar{g}
   } \d x \numberthis \label{a3}
\end{align*}

Firstly, since for any $z\in\C$,
\begin{align*}
  \bigg\vert{
    \abs{1+z}^{2\sigma}-1-\sigma{z}-\sigma\bar{z}
  }\bigg\vert
  \leq C\sts{ \abs{z}^2 +\abs{z}^{2\sigma} },
\end{align*}
where $C$ is a constant independent of $z$, we have
\begin{align*}
& \;
\bigg\vert{
    \eqref{a1}+
    \Re\int
    \sts{
       \i\sigma\abs{Q}^{2\sigma-2}{Q}\bar{f}
       +
       \i\sigma\abs{Q}^{2\sigma-2}\bar{Q}{f}
    }h_x\bar{g}
}\d x \bigg\vert
\\
=
&\;
  \bigg\vert{
  \Re\int
  \sts{
       \i\abs{{Q+f}}^{2\sigma}
       -
       \i\abs{Q}^{2\sigma}
       -
       \i\sigma\abs{Q}^{2\sigma-2}{Q}\bar{f}
       -
       \i\sigma\abs{Q}^{2\sigma-2}\bar{Q}{f}
  }
  h_x\bar{g}
  } \d x \bigg\vert
  \\
  \leq
  & \;
  C \norm{g}_{H^1 }\norm{h}_{H^1 }
  \sts{\norm{f}_{H^1}^2 +\norm{f}_{H^1}^{2\sigma}},
  \numberthis\label{a1l}
\end{align*}
where we used the fact that $\abs{Q}$ vanishes nowhere.
Let us denote
\begin{equation*}
  \tilde{\Tcal}_1\sts{f}\sts{h,g}
  =
  -
  \Re\int
  \sts{
       \i\sigma\abs{Q}^{2\sigma-2}{Q}\bar{f}
       +
       \i\sigma\abs{Q}^{2\sigma-2}\bar{Q}{f}
    }h_x\bar{g} \d x.
\end{equation*}

Secondly, notice that
\begin{align*}
    \eqref{a2}
   =
   &
   -
   \Re\int
   \i\sigma
   \sts{
       \abs{{Q+f}}^{2\sigma-2}
       \sts{\overline{Q+f}}\sts{Q+f}_x h\bar{g}
       -
       \abs{Q}^{2\sigma-2}\bar{Q}Q_x h\bar{g}
   }\d x
   \\
   =
   &
   -
   \Re\int
   \i\sigma
   \sts{
       \abs{{Q+f}}^{2\sigma-2}\sts{\overline{Q+f}}\sts{Q+f}_x h\bar{g}
       -
       \abs{{Q+f}}^{2\sigma-2}\sts{\overline{Q+f}}Q_x h\bar{g}
   } \d x
   \\
   &
   -
   \Re\int
   \i\sigma
   \sts{
       \abs{{Q+f}}^{2\sigma-2}\sts{\overline{Q+f}}Q_x h\bar{g}
       -
       \abs{Q}^{2\sigma-2}\bar{Q}Q_x h\bar{g}
   } \d x
   \\
   =
   &
   -
   \Re\int
   \i\sigma
    \abs{{Q+f}}^{2\sigma-2}\sts{\overline{Q+f}}
        {f}_x h\bar{g} \d x
   \numberthis \label{a21}
   \\
   &
   -
   \Re\int
   \i\sigma
   \sts{
       \abs{{Q+f}}^{2\sigma-2}\sts{\overline{Q+f}}
       -
       \abs{Q}^{2\sigma-2}\bar{Q}
   }Q_x h\bar{g} \d x.
   \numberthis \label{a22}
\end{align*}
On the one hand, since
\begin{align*}
  \bigg\vert
  {
    \abs{1+z}^{2\sigma-2}\sts{1+z}-1
  }
  \bigg\vert
  \leq C\sts{ \abs{z} +\abs{z}^{2\sigma-1} },
\end{align*}
where $C$ is a constant independent of $z$, we have
\begin{align*}
  \bigg\vert{
    \eqref{a21}
    +
    \Re\int
    \i\sigma
    \abs{{Q}}^{2\sigma-2} {\bar{Q}}
        {f}_x h\bar{g}
  }\d x
  \bigg\vert
  \leq \; C~\norm{g}_{H^1\sts{\R}}\norm{h}_{H^1\sts{\R}} \sts{\norm{f}_{H^1\sts{\R}}^2 +\norm{f}_{H^1\sts{\R}}^{2\sigma}}.
\numberthis\label{tp10}
\end{align*}
On the other hand, since
\begin{align*}
  \bigg\vert
  {
    \abs{1+z}^{2\sigma-2}\sts{1+z}-1
    - \sigma {z} - \sts{\sigma-1}\bar{z}
  }
  \bigg\vert
  \leq C\sts{ \abs{z}^2 +\abs{z}^{2\sigma-1} },
\end{align*}
where $C$ is a constant independent of $z$, which together with \eqref{QDQ} implies that
\begin{align*}
&\;
  \bigg\vert
    \eqref{a22}
    +
    \Re\int
    \i
    \sts{
    \sigma^2
    \abs{Q}^{2\sigma-2}\bar{f}
    +
    \sigma\sts{\sigma-1}
    \abs{Q}^{2\sigma-4}\bar{Q}^2 {f}
    }
    Q_x  h\bar{g} \d x
  \bigg\vert
\\
\leq
& \;
C
\sts{
    \int\abs{Q}^{2\sigma-3}~\abs{Q_x}~\abs{f}^2~\abs{g}~\abs{h} \d x  +
    \int\abs{Q_x}~\abs{f}^{2\sigma-1}~\abs{g}~\abs{h} \d x
}
\\
\leq
& \;
C\norm{g}_{H^1\sts{\R}}\norm{h}_{H^1\sts{\R}}
\sts{
    \frac{1}{c}\norm{Q}_{\infty}^{2\sigma-2}
    \norm{f}_{H^1\sts{\R}}^2
    +
    \frac{1}{c}\norm{Q}_{\infty}
    \norm{f}_{H^1\sts{\R}}^{2\sigma-1}
}.
\numberthis\label{tp11}
\end{align*}
Now, let us denote
\begin{align*}
\tilde{\Tcal}_2\sts{f}\sts{h,g}
=
-
\Re\int
  \sts
  {
    \i\sigma
    \abs{{Q}}^{2\sigma-2} {\bar{Q}}
        {f}_x h\bar{g}
  +
    \i
    \sigma^2
    \abs{Q}^{2\sigma-2}Q_x \bar{f} h\bar{g}
    +
    \i
    \sigma\sts{\sigma-1}
    \abs{Q}^{2\sigma-4}\bar{Q}^2 Q_x {f}h\bar{g}
  } \d x.
\end{align*}
By \eqref{tp10} and \eqref{tp11}, we have
\begin{align*}
\bigg\vert{
  \eqref{a2}
  -
  \tilde{\Tcal}_2\sts{f}\sts{h,g}
}
\bigg\vert
\leq\;
C\sts{Q}\norm{g}_{H^1\sts{\R}}\norm{h}_{H^1\sts{\R}}
\sts{
    \norm{f}_{H^1\sts{\R}}^2
    +
    \norm{f}_{H^1\sts{\R}}^{2\sigma}
}.
\numberthis\label{a2l}
\end{align*}

%

Thirdly, we denote
\begin{align*}
\tilde{\Tcal}_3\sts{f}\sts{h,g}
=
-
    \Re\int
  \sts
  {
    \i\sigma
    \abs{{Q}}^{2\sigma-2} {{Q}}
         {f}_x \bar{h}\bar{g}
  +
    \i
    \sigma^2
    \abs{Q}^{2\sigma-2} {Q}_x f \bar{h}\bar{g}
    +
    \i
    \sigma\sts{\sigma-1}
    \abs{Q}^{2\sigma-4}{Q}^2 {Q}_x \bar{f}\bar{h}\bar{g}
  } \d x.
\end{align*}
By the similar argument as above,
we can show that
\begin{align*}
\bigg\vert{
  \eqref{a3}
  -
  \tilde{\Tcal}_3\sts{f}\sts{h,g}
}
\bigg\vert
\leq
\;
C\sts{Q}\norm{g}_{H^1\sts{\R}}\norm{h}_{H^1\sts{\R}}
\sts{\norm{f}_{H^1\sts{\R}}^2 +\norm{f}_{H^1\sts{\R}}^{2\sigma}}.
\numberthis\label{a3l}
\end{align*}

Finally, we denote
\begin{equation}\label{T}
  \Tcal\sts{f,g,h}
  =
  \tilde{\Tcal}_1\sts{f}\sts{h,g}
  +
  \tilde{\Tcal}_2\sts{f}\sts{h,g}
  +
  \tilde{\Tcal}_3\sts{f}\sts{h,g},
\end{equation}
then by \eqref{a1l}, \eqref{a2l} and \eqref{a3l}, we
obtain
\begin{align*}
\abs{
    \Scal^{\dprime}\sts{Q+f}\sts{h,g}
    -
    \Scal^{\dprime}\sts{Q}\sts{h,g}
    -
    \Tcal\sts{f,g,h}
} = \so{\norm{f}_{H^1}}
\end{align*}
as $\norm{f}_{H^1}$ goes to zero.
Since $\sigma>1$, we have
\begin{equation*}
  \Tcal\in
  \sts{H^1\sts{\R} \times H^1\sts{\R} \times H^1\sts{\R}}'.
\end{equation*}
Therefore, $\Scal^{\tprime}\sts{Q}$ exists, and
$
  \Scal^{\tprime}\sts{Q}\sts{h,g,f}
  =
  \Tcal\sts{f,h,g}.
$ This completes the proof of Lemma \ref{lem:S3d}.
\end{proof}

\subsection*{Acknowledgements.}
The authors have been partially supported by the NSF grant of China (No. 11671046, No. 11671047) and also partially
supported by Beijing Center of Mathematics and Information
Interdisciplinary Science.



\begin{thebibliography}{10}
\providecommand{\url}[1]{{#1}}
\providecommand{\urlprefix}{URL }
\expandafter\ifx\csname urlstyle\endcsname\relax
  \providecommand{\doi}[1]{DOI~\discretionary{}{}{}#1}\else
  \providecommand{\doi}{DOI~\discretionary{}{}{}\begingroup
  \urlstyle{rm}\Url}\fi

\bibitem{AmbMal:book}
Ambrosetti, A., Malchiodi, A.:  \emph{Nonlinear Analysis and Semilinear Elliptic
  Problems}.
\newblock Cambridge Studies in Advanced Mathematics, Cambridge University Press. 2007.

\bibitem{Caz:NLS:book}
Cazenave, T.: \emph{Semilinear {S}chr{\"o}dinger Equations}.
\newblock Courant Institute of Mathematical Sciences, Vol. 10, American Mathematical
  Society. 2003.

\bibitem{CaL:NLS:stable}
Cazenave, T., Lions, P.: Orbital stabilty of standing waves for some nonlinear
  {S}chr\"{o}dinger equations.
\newblock \emph{Comm. Math. Phys.} \textbf{85}(4), 549--561(1982).

\bibitem{ColinOhta-DNLS}
Colin, M., Ohta, M.: Stability of solitary waves for derivative nonlinear
  {S}chr{\"o}dinger equation.
\newblock \emph{Ann. Inst. H. Poincar{\'e} Anal. Non Lin{\'e}aire}
  \textbf{23}(5), 753--764(2006)..


\bibitem{CoPe2003}
Comech, A., Pelinovsky, D.: Purely nonlinear instability of standing waves with minimal energy,
\newblock \emph{Comm. Pure Appl. Math.} 56(11),  1097-0312 (2003).

\bibitem{FukHI:gDNLS}
Fukaya, N., Hayashi, M., Inui, T.,
A sufficient condition for global existence of solutions to a generalized derivative nonlinear Schr\"{o}dinger equation.
\newblock To appear in \emph{Analysis and PDE}.

\bibitem{Fukaya2017}
Fukaya, N.: Instability of solitary waves for a generalized derivative nonlinear Schr{\"o}dinger equation in a borderline case,
\newblock \emph{arXiv:} 1604.07945.

\bibitem{GSS1987}
Grillakis, M., Shatah, J., Strauss, W.: Stability theory of solitary waves in the presence of symmetry, I.
\newblock \emph{J. Funct. Anal.} 74(1), 160 - 197(1987).

\bibitem{GSS1990}
Grillakis, M., Shatah, J., Strauss, W.: Stability theory of solitary waves in the presence of symmetry, II.
\newblock \emph{J. Funct. Anal.} 94(2), 308 - 348(1990).


\bibitem{HaOz2016}
Hayashi, M., Ozawa T.: Well-posedness for a generalized derivative nonlinear Schr{\"o}dinger equation,
\newblock \emph{J. Diff. Equat.}, 261(10), 5424 - 5445(2016).

\bibitem{IbMasN:NLKG:scat}
Ibrahim, S., Masmoudi, N., Nakanishi, K.:
Scattering threshold for the focusing nonlinear Klein-Gordon equation.
\newblock \emph{Analysis and PDE}, 4(3), 405-460(2011).


\bibitem{KaupN:DNLS:soliton}
Kaup, D. J., Newell, A. C.: An exact solution for a derivative
nonlinear
  {S}chr{\"o}dinger equation.
\newblock \emph{J. Math. Phys.} \textbf{19}(4), 798--801(1978).


\bibitem{LeWu:DNLS}
Le Coz S., Wu  Y.:
Stability of multi-solitons for the derivative nonlinear
  Schr{\"o}dinger equation.
\newblock To appear in \emph{IMRN}.

\bibitem{LSS2013}
Liu, X., Simpson, G., Sulem, C.: Stability of solitary waves for a generalized derivative nonlinear Schr{\"o}dinger Equation.
\newblock \emph{J. Nonlinear Sci.} 23(4), 557--583(2013).



\bibitem{Maeda2012}
Maeda, M.: Stability of bound states of Hamiltonian PDEs in the degenerate cases.
\newblock \emph{J. Funct. Anal.} 263(2), 511 - 528(2012).


\bibitem{MartelM:Instab:gKdV}
Martel, Y., Merle, F.: Instability of solitons for the critical generalized
  {Korteweg��de Vries} equation.
\newblock \emph{Geom. Funct. Anal.} \textbf{11}(1), 74--123(2001).

\bibitem{MMR:dya:gKdV}
Martel, Y., Merle, F., Raphael, P.: Blow up for the critical generalized Korteweg-de Vries equation. I: Dynamics near the soliton.
\newblock \emph{Acta. Math.} \textbf{212}(1), 59-14-(2014)

\bibitem{MartelMT:Stab:gKdV}
Martel, Y., Merle, F., Tsai, T. P. (2002). Stability and asymptotic
stability for
  subcritical g{KdV} equations.
\newblock \emph{Comm. Math. Phys.} \textbf{231}(2), 347--373.

\bibitem{MartelMT:Stab:NLS}
Martel, Y., Merle, F., Tsai, T. P. (2006). Stability in {$H^1$} of the sum
of {$K$}
  solitary waves for some nonlinear Schr{\"o}dinger equations.
\newblock \emph{Duke Math. J.} \textbf{133}(3), 405--466.

\bibitem{MiaoTX:Exist}
Miao, C., Tang, X., Xu, G.: Solitary waves for nonlinear {S}chr\"{o}dinger
  equation with derivative.
  \newblock \emph{Comm. Contemp. Math.} \textbf{1750049}, 27pp, 2017

\bibitem{MiaoTX:Stab}
Miao, C., Tang, X., Xu, G.: Stability of the solitary wave for the derivative Schr\"{o}dinger equation in hte energy space.
\newblock \emph{Calc. Var. PDEs.} \textbf{56}(2), 48pp, 2017

\bibitem{MOMT-PHY}
Mio, K., Ogino, T., Minami, K., Takeda, S.: Modified nonlinear
  {Schr{\"o}dinger} equation for {Alfv{\'e}n} waves propagating along the
  magnetic field in cold plasmas.
\newblock \emph{J. Phys. Soc. Jap.} \textbf{41}(1), 265--271(1976).

\bibitem{M-PHY}
Mj{\o}lhus, E.:  On the modulational instability of hydromagnetic waves parallel
  to the magnetic field.
\newblock \emph{J. Plasma Phys.} \textbf{16}, 321--334(1976).

\bibitem{MMW-PHY}
Moses, J., Malomed, B. A., Wise, F. W.: Self-steepening of ultrashort optical pulses without self-phase-modulation.
\newblock \emph{Phys. Rev. A} \textbf{76}, 1-4 (2007)

\bibitem{NakSchlag:book}
Nakanishi K., Schlag W.: \emph{Invariant manifolds and dispersive Hamiltonian evolution equations}.
\newblock Zurich Lectures in Advanced Mathematics, European Mathematical Society (EMS), Z\"{u}rich, 2011.

\bibitem{Ohta:Ins:JFA}
Ohta, M.: Instaiblity of bound states for abstract nonlinear Schr\"{o}dinger equatios.
\newblock \emph{J. Funct. Anal.} \textbf{261}(1), 90-110 (2011)

\bibitem{Pave:book}
Pava, J. : \emph{Nonlinear Dispersive Equations: Existence and Stability of Solitary and Periodic Travelling Wave Solutions}.
\newblock Mathematical Surveys and Monographs. American Mathematical Society. 2009


\bibitem{Shatah:NLKG:unstab}
Shatah, J.: Unstable ground state of nonlinear Klein-Gordon equations.
\newblock \emph{Trans. Amer. Math. Soc.} \textbf{290}(2), 701-710 (1985)


\bibitem{Shatah:NLS:unstab}
Shatah, J.: Instability of nonlinear bound states.
\newblock \emph{Comm. Math. Phys.} \textbf{100}(2), 173-190 (1985)

\bibitem{SuSu-book}
Sulem, C., Sulem, P.: \emph{ The Nonlinear Schr{\"o}dinger Equation: Self-Focusing and
  Wave Collapse}.
\newblock Applied Mathematical Sciences, Vol. 139, Springer, New York, 2007.


\bibitem{TX:Stab}
Tang, X., Xu, G.: Stability of the sum of two solitary waves for (gDNLS) in the energy space.
\newblock \emph{J. Diff. Equat.} \textbf{264}(6), 4094-4135(2018)


\bibitem{Weinstein1985}
Weinstein, M. I.: Modulational stability of ground states of nonlinear Schr{\"o}dinger equations,
\newblock \emph{SIAM J. Math. Anal.}, 16(3), 472-491(1985).


\bibitem{Wein:stab:CPAM}
Weinstein, M. I.: Lyapunov stability of ground states of nonlinear dispersive
  evolution equations.
\newblock \emph{Comm. Pure Appl. Math.} \textbf{39}(1), 51--67(1986).

\bibitem{Willem:book}
Willem, M.:
\emph{Minimax Theorems},
\newblock {Birkhauser}. Boston. 1996.





\end{thebibliography}

\end{document}